\newtheorem{theorem}{Theorem}[section]
\newtheorem{lemma}[theorem]{Lemma}
\newtheorem{corollary}[theorem]{Corollary}
\newtheorem{proposition}[theorem]{Proposition}
\numberwithin{equation}{section}
\DeclareMathOperator{\supp}{supp} 
\DeclareMathOperator{\diam}{diam} \DeclareMathOperator{\dist}{dist} \DeclareMathOperator{\s}{span}
\DeclareMathOperator{\SOT}{SOT} \DeclareMathOperator{\WOT}{WOT} 
  \DeclareMathOperator{\tr}{tr}
\newcommand{\C}{\ensuremath{\mathbb{C}^n}} \newcommand{\B}{\ensuremath{B_\alpha }}
\newcommand{\Fp}{\ensuremath{F_\alpha ^p }} 
\newcommand{\Ft}{\ensuremath{F_\alpha ^2 }} \newcommand{\Lt}{\ensuremath{L_\alpha ^2 }}
\newcommand{\Lp}{\ensuremath{L_\alpha ^p }} \newcommand{\Z}{\ensuremath{\mathbb{Z}^{2n}}}
\newcommand{\Fpphi}{\ensuremath{F_\phi ^p }} \newcommand{\N}{\ensuremath{\mathbb{N}}}
 \newcommand{\Fqphi}{\ensuremath{F_\phi ^q }} \newcommand{\CalZ}{\ensuremath{\mathcal{Z}}}
\newcommand{\Ftwophi}{\ensuremath{F_\phi ^2 }} \newcommand{\Tp}{\ensuremath{\mathcal{T}_\alpha ^p}}
\newcommand{\Fonephi}{\ensuremath{F_\phi ^1 }}  \newcommand{\Lonephi}{\ensuremath{L_\phi ^1 }}
\newcommand{\Finfphi}{\ensuremath{F_\phi ^\infty }} 
\newcommand{\Lpphi}{\ensuremath{L_\phi ^p }} \newcommand{\D}{\ensuremath{\mathbb{D}}}
\newcommand{\Lqphi}{\ensuremath{L_\phi ^q }}  \newcommand{\Ltwoa}{\ensuremath{A^2(\D)}}
\newcommand{\Ltwophi}{\ensuremath{L_\phi ^2 }} \newcommand{\Bn}{\ensuremath{\mathbb{B}_n}}
\newcommand{\Ltwoaphi}{\ensuremath{A_\phi ^2(\Bn)}}
\newcommand{\Lpaphi}{\ensuremath{A_\phi ^p (\Bn)}}
\newcommand{\Linfphi}{\ensuremath{L_\phi ^\infty }}
\newcommand{\incn}{\ensuremath{\int_{\C}}}
\newcommand{\Tpphi}{\ensuremath{\mathcal{T}_\phi ^p}}
\newcommand{\SLphi}{\ensuremath{\mathcal{S}\mathcal{L} (\phi)}}
\newcommand{\SL}{\ensuremath{\mathcal{S}\mathcal{L}(\alpha)}}
\newcommand{\F}{\ensuremath{\mathcal{F}}}
\newcommand{\Tt}{\ensuremath{\mathcal{T}_\alpha ^2}}
\newcommand{\Tk}{\ensuremath{{\widetilde{k}}}}
\newcommand{\Q}{\ensuremath{\mathcal{Q}}}
\newcommand{\INF}[1] {\ensuremath{| #1|_{\infty}}}
\begin{document}

\title[{Compactness of operators on generalized Fock spaces}]{{Compactness and essential norm properties of operators on generalized Fock spaces}}




\author[Joshua Isralowitz]{Joshua Isralowitz}
\address{Joshua Isralowitz \\ Department of mathematics and statistics \\  SUNY Albany \\  Albany, NY  \\ 12222, USA}
\email{jisralowitz@albany.edu}

\begin{abstract}
The purpose of this paper is to systematically study compactness and essential norm properties of operators on a very general class of weighted Fock spaces over $\C$.  In particular, we obtain rather strong necessary and sufficient conditions for a wide class of operators (which includes operators in the Toeplitz algebra generated by bounded symbols) to be compact and we obtain related estimates on the essential norm of such operators. Finally, we discuss interesting open problems related to our results. \end{abstract}

\subjclass[2010]{47B35}


\maketitle


\maketitle

\section{Introduction}
For some $\alpha > 0$, let $F_\alpha ^p$ be the classical Fock space of entire functions on $\C$ such that $f(\cdot) e^{-\frac{\alpha}{2} |\cdot|} \in L^p(\C, dv)$ where $dv$ is the ordinary Lebesgue volume measure. Let $K(z, w) = e^{\frac{\alpha}{2} z \cdot \overline{w}}$ be the reproducing kernel of $\Ft$ and let $k_z(w) = K(z, w) / \sqrt{K(w, w)}$ be the normalized reproducing kernel of $\Ft$.  If $A$ is a bounded operator on $\Fp$ for $1 < p < \infty$, then let $B(A)$ be the bounded function on $\C$ defined by \begin{equation*} (B(A))(z) = \langle Ak_z, k_z \rangle_{\Ft}. \end{equation*}  It is well known (and very easy to prove, see \cite{Zhu}) that $k_z \rightharpoonup 0$ weakly in $\Fp$ as $|z| \rightarrow \infty$ if $1 < p < \infty$.  Thus, if $A$ is compact on $\Fp$ and $1 < p < \infty$, then an easy application of H\"{o}lder's inequality immediately tells us that $B(A)$ vanishes at infinity.

On the other hand, one can easily come up with examples of bounded operators on $\Fp$ (in fact even bounded Toeplitz operators on $\Ft$, see \cite{BC}) whose Berezin transform vanishes at infinity but that are nonetheless not compact. This immediately raises the question of when the Berezin transform of a bounded operator vanishing at infinity implies the compactness of this operator.

Define the Toeplitz operator $T_f$ on $\Fp$ with $f \in L^\infty(\C)$ by the usual formula $T_f = P M_f$ where $P$ is the orthogonal projection from $\Lt$ to $\Ft$ and $M_f$ is ``multiplication by $f$" (and note that $T_f$ is bounded on $\Fp$ when $1 < p < \infty$ since $P$ is bounded on $\Lt$.) Furthermore, given any class of measurable functions $X$ on $\C$, let $\Tp(X)$ be the $\Fp$ operator norm closure of the algebra generated by $\{T_f : f \in X\}$.   Then the following theorem was recently proved by W. Bauer and J. Isralowitz (see \cite{BI})

\begin{theorem} \label{BIThm} If $1 < p < \infty$ and $A \in \Tp(L^\infty(\C))$ then $B(A)$ vanishing at infinity implies that $A$ is compact.  Furthermore, any compact operator on $\Fp$ is in fact in fact in $\Tp(L^\infty(\C))$. \end{theorem}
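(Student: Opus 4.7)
The plan is to treat the two assertions separately, exploiting the Weyl translation structure of Fock space. On $\Ft$ there is a natural family of unitaries $U_z$ (the Weyl operators) satisfying $U_{-z} T_f U_z = T_{f \circ \tau_z}$, where $\tau_z(w) = w+z$; these extend to isometries on $\Fp$ for $1 < p < \infty$ and leave the algebra $\Tp(L^\infty(\C))$ invariant under conjugation. A direct calculation gives $B(U_{-z} A U_z)(w) = B(A)(w+z)$, so the hypothesis $B(A) \to 0$ at infinity is equivalent to uniform smallness of the Berezin transforms of the translates $A_z := U_{-z} A U_z$ near the origin.

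For the first assertion I would run a limit-operator argument. Given $A \in \Tp(L^\infty(\C))$ and a sequence $z_k \to \infty$, first one passes to a subsequence so that $A_{z_k}$ converges in the strong operator topology to some $A_\infty \in \Tp(L^\infty(\C))$; the existence of such strong limits rests on the fact that the (Berezin-smoothed) symbols of operators in the algebra form a uniformly bounded equicontinuous family, so an Arzel\`a--Ascoli/diagonal argument lifts pointwise convergence of symbols to strong convergence of their quantizations. By the translation identity above, $A_\infty$ then has identically vanishing Berezin transform. The next step is to invoke injectivity of the Berezin transform on the Toeplitz algebra to conclude $A_\infty = 0$. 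Finally, the statement ``every limit operator of $A$ vanishes'' is converted into compactness of $A$ by a band-dominated-operator criterion: an operator in $\Tp(L^\infty(\C))$ is compact precisely when every subsequential strong limit of its Weyl translates is zero.

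For the second assertion it suffices, by the density of finite-rank operators in the compacts and linearity, to approximate in operator norm a single rank-one operator $f \mapsto \langle f, k_z\rangle k_w$ by elements of $\Tp(L^\infty(\C))$. Conjugation by Weyl operators reduces this to the diagonal base case $z = w = 0$, i.e.\ to approximating the rank-one projection $P_0 : f \mapsto f(0) k_0 / \sqrt{K(0,0)}$. A natural candidate is $T_{\chi_r}$, where $\chi_r$ is a suitably normalized bump supported in the ball of radius $r$ about the origin; the off-diagonal Gaussian decay of $K(z,w)$ yields $T_{\chi_r} \to P_0$ in operator norm as $r \to 0$. General rank-one operators then arise as Weyl conjugates of $P_0$, and the Weyl operators themselves can be shown to lie in $\Tp(L^\infty(\C))$ (for instance, by norm-approximating them by Toeplitz operators with exponential-type symbols), so that all finite-rank, and hence all compact, operators are placed inside the algebra.

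The expected main obstacle is the injectivity of the Berezin transform on $\Tp(L^\infty(\C))$: for a generic bounded operator, $B(A)\equiv 0$ does not force $A = 0$, so one needs a structural approximation theorem expressing every element of the Toeplitz algebra as a norm limit of operators whose Berezin transforms determine them. Setting up this structural theorem and dovetailing it with the band-dominated compactness criterion from the first step is the technical heart of the argument.
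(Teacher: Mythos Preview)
This theorem is cited from \cite{BI}; the present paper reproves it as a byproduct of more general results, along a route that differs from yours. For sufficiency, the paper combines Proposition~\ref{BerProp} (which, using the Weyl operators $U_z$ available on $\Fp$, shows that $B(A)\to 0$ is equivalent to condition~(\ref{CompOpSuffCond}) for all $R>0$) with Theorem~\ref{CompOpSuff} (proved by frame-theoretic localization for $p=2$ in Section~\ref{SectionProofs1} and via Lemma~\ref{MWLem} for $p\neq 2$). For necessity, it writes $K(\cdot,z)\otimes K(\cdot,w)$ as a scalar multiple of the product $T_{\delta_z}T_{\delta_w}$ (Lemma~\ref{CompOpNec1}) and then approximates each $T_{\delta_w}$ in operator norm by Toeplitz operators with normalized bump symbols (Lemma~\ref{CompOpNec2}).

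Your limit-operator scheme for sufficiency is a legitimate alternative philosophy, but you have mislocated the difficulty. Berezin-transform injectivity is \emph{not} an obstacle: it holds for \emph{every} bounded operator, since $\langle Ak_z,k_z\rangle\equiv 0$ forces $\langle Ak_z,k_w\rangle\equiv 0$ by analytic continuation in $(z,\bar w)$ and hence $A=0$ by density of the kernels (the paper records exactly this at the end of Section~\ref{SectionProofs2}). The real work is the black-boxed ``band-dominated criterion'' that vanishing of all limit operators forces compactness; this is comparable in depth to Theorem~\ref{CompOpSuff}, and your sketch gives no mechanism for it. A further gap is the claimed SOT subsequential limits of $\{A_{z_k}\}$: the paper only uses (and only needs) WOT limits in Proposition~\ref{BerProp}, and your Arzel\`a--Ascoli remark on symbols does not lift to arbitrary elements of the algebra. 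For necessity, your argument has a concrete error: conjugating $P_0$ by a single $U_z$ yields only diagonal rank-ones $k_z\otimes k_z$, not $k_z\otimes k_w$ for $z\neq w$; patching this by placing the non-compact operators $U_z$ themselves inside $\Tp(L^\infty(\C))$ is neither obvious nor argued. The paper's $T_{\delta_z}T_{\delta_w}$ device sidesteps this entirely.
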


Before we continue let us mention some history leading up to this theorem.  First, note that the sufficiency part of Theorem \ref{BIThm} was first proved by S. Axler and D. Zheng in the seminal paper \cite{AZ} for the classical Bergman space $L_a ^2 (\mathbb{D}, dA)$ setting in the special case where $A$ is in the algebra generated by $\{T_f : f \in L^\infty(\mathbb{D})\}$.  Furthermore,  note that this result was extended to the $\Ft$ setting by  M. Engl\v{i}s in \cite{E}.  On the other hand,  Theorem \ref{BIThm} was later proved for $L_a ^p(\mathbb{B}_n, dv)$ in its entirety when $1 < p < \infty$ by D. Su\'{a}rez in \cite{S} using vastly more technical and deeper techniques than the ones in \cite{AZ} (see also \cite{MSW} where this result is extended to the canonically weighted Bergman space $L_a ^p(\mathbb{B}_n, dv_\gamma)$).  Moreover, note that the proof of Theorem \ref{BIThm} in \cite{BI} largely uses \cite{S} as a blueprint, though (as usual) the details involved in extending these arguments to the Fock space setting are often highly nontrivial and thus require considerable work. Also, note that both \cite{BI} and \cite{S} contain (as largely byproducts of the techniques used to prove Theorem \ref{BIThm} and its Bergman space analogue) interesting essential norm estimates for both general operators and operators in  $\Tp(L^\infty(\C))$  and respectively $\mathcal{T} ^p (L^\infty(\Bn))$.

Interestingly, note that Theorem \ref{BIThm} (and remarkably its Bergman space version in \cite{S}) was given a vastly simplified proof by M. Mitkovski and B. Wick in \cite{MW} using completely different methods than those of \cite{BI,S}. On the other hand, J. Xia and D. Zheng in the recent paper \cite{XZ} introduced the class $\SL$ of ``sufficiently localized'' operators on $\Ft$ consisting of those operators $A$ on $\Ft$ where \begin{equation*}
|\langle Ak_z,k_w\rangle_{\Ft }| \lesssim \frac{1}{\left(1+|{z-w}|\right)^{2n + \delta}} \end{equation*} for some $\delta = \delta(A) > 0$ independent of $z, w \in \C$, which is a $*-$algebra of bounded operators on $\Ft$ that contains all Toeplitz operators with bounded symbols.  Furthermore, Theorem \ref{BIThm} was generalized in the $\Ft$ setting in  \cite{XZ} as follows \begin{theorem} \label{XZThm} If $A$ is in the $\Ft$ operator norm closure of $\SL$ then $A$ is compact on $\Ft$ if $B(A)$ vanishing at infinity. \end{theorem}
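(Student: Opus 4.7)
The plan is to attack the problem in two phases: first prove the theorem (with quantitative control on the essential norm) under the stronger hypothesis that $A \in \SL$ itself, then bootstrap to the norm closure by a perturbation argument. The bridge between the two phases is the trivial bound $|B(A)(z)| = |\langle A k_z, k_z \rangle_{\Ft}| \leq \|A\|$, which makes $B$ a contraction from the bounded operators on $\Ft$ into $L^\infty(\C)$.

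\textbf{Phase 1.} For $A \in \SL$, I would aim to establish an essential norm estimate of the form
\[
\|A\|_e \lesssim \limsup_{|z|\to\infty} |B(A)(z)|.
\]
The principal tool is the defining kernel decay $|\langle A k_z, k_w\rangle_{\Ft}| \lesssim (1+|z-w|)^{-(2n+\delta)}$. Using the overcompleteness of the normalized reproducing kernels $\{k_w\}$, I would discretize along a sufficiently dense but separated lattice $\Lambda \subset \C$ to obtain an approximate frame-type expansion of the identity on $\Ft$. This lets us write $A \approx \sum_{z_j, z_k \in \Lambda} \langle A k_{z_j}, k_{z_k}\rangle_{\Ft} \, E_{jk}$ for suitable rank-one building blocks $E_{jk}$. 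For $R > 0$, define $K_R$ by truncating this double sum to $|z_j|,|z_k| \leq R$; then $K_R$ is a finite-rank operator, hence compact, and it remains to estimate $A - K_R$.

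The residual splits into two kinds of contributions: an ``off-diagonal'' part with $|z_j-z_k|$ large, and a ``far-from-origin, near-diagonal'' part where $|z_j|,|z_k| > R$ and $|z_j-z_k|$ is small. The off-diagonal part is controlled in operator norm by a Schur-type test, with the summable weight $(1+|z_j - z_k|)^{-(2n+\delta)}$ absorbing the error. The near-diagonal tail is the crucial one: in that regime the sufficient localization essentially pins $\langle A k_{z_j}, k_{z_k}\rangle_{\Ft}$ close to $B(A)(z_j)$, which is small by hypothesis since $|z_j| > R$. Taking $R \to \infty$ then yields the claimed bound on $\|A\|_e$. The main obstacle will be making the discretization rigorous and tracking constants carefully enough so that the Schur bound on the off-diagonal terms truly absorbs into a small error, while the near-diagonal term is genuinely dominated by the decay of $B(A)$ rather than by the raw kernel pointwise estimate.

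\textbf{Phase 2.} Given $A$ in the operator norm closure of $\SL$ with $B(A)$ vanishing at infinity, pick $A_n \in \SL$ with $\|A_n - A\| \to 0$. Since $\|B(A_n) - B(A)\|_\infty \leq \|A_n - A\|$ and $B(A)$ vanishes at infinity,
\[
\limsup_{|z|\to\infty} |B(A_n)(z)| \leq \|A_n - A\| \xrightarrow{n\to\infty} 0.
\]
Applying the Phase 1 estimate to each $A_n$ and using subadditivity of the essential norm gives
\[
\|A\|_e \leq \|A - A_n\| + \|A_n\|_e \lesssim \|A - A_n\|,
\]
and letting $n \to \infty$ forces $\|A\|_e = 0$, i.e., $A$ is compact on $\Ft$. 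The density argument is routine; the real content is entirely in Phase 1.
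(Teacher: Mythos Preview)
Your overall architecture---frame-type discretization over a lattice, Schur control of the off-diagonal part, and a near-diagonal remainder concentrated far from the origin---matches the Xia--Zheng/paper approach. The genuine gap is in the near-diagonal step. You write that ``the sufficient localization essentially pins $\langle A k_{z_j}, k_{z_k}\rangle_{\Ft}$ close to $B(A)(z_j)$,'' but the $\SL$ condition says nothing of the sort: for $|z_j - z_k|$ bounded it only yields the trivial bound $|\langle A k_{z_j}, k_{z_k}\rangle| \lesssim 1$. What the discretization actually delivers is an estimate of the form
\[
\|A\|_e \;\lesssim\; R^{4n}\,\limsup_{|z|\to\infty}\ \sup_{w\in B(z,R)} |\langle A k_z, k_w\rangle|
\]
for some $R = R(A)$, i.e.\ control by near-diagonal matrix entries, not by the single diagonal entry $B(A)(z)$. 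Passing from the vanishing of $B(A)$ to the vanishing of these off-diagonal-but-nearby entries is a separate step that does \emph{not} follow from localization; in $\Ft$ it is carried out with the weighted translations $U_z f(w) = f(z-w)k_z(w)$: one takes a WOT limit $\widehat{A}$ of $U_{z_m}^* A U_{z_m}$ along a sequence $|z_m|\to\infty$, checks that $B(\widehat{A}) \equiv 0$, and invokes injectivity of the Berezin transform to conclude $\widehat{A}=0$, hence $\langle A k_{z_m}, k_{z_m - w}\rangle \to 0$ uniformly for $|w|\le R$. This is the missing ingredient in your Phase~1.

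There is a second problem: the quantitative inequality $\|A\|_e \lesssim \limsup_{|z|\to\infty}|B(A)(z)|$ with a constant independent of $A$ is almost certainly false (the Bergman-space analogue is known to fail, and the same is expected here), so your Phase~2 bootstrap cannot work as written. The paper avoids this by building the norm-closure reduction directly into the frame estimate: one first replaces $A$ by a nearby $B\in\SL$, does the diagonal/off-diagonal splitting for $B$, and only at the very end swaps $B$ back for $A$ with an additive $\|A-B\|$ error. The $R$ that emerges depends on $A$ (via the chosen $B$), so no uniform linear bound in $\limsup|B(A)|$ is ever claimed.
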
  \noindent Also, note that Theorem \ref{XZThm} was proved by frame theoretic ideas that are vastly simpler than the ones in \cite{BI,S}.

The purpose of this paper is to try to extend Theorem \ref{XZThm} to a very wide class of exponentially weighted Fock spaces, and more generally to study essential norm properties of operators on these weighted Fock spaces.  More precisely, let $d^c =  \frac{i}{4} (\overline{\partial} - \partial)$ and let $d$ be the usual exterior derivative. Let $\phi \in C^2(\C)$ be a real valued function on $\C$  such that \begin{equation} c \omega_0 < d d^c \phi < C \omega_0 \label{PhiCond} \end{equation} holds uniformly pointwise on $\C$ for some positive constants $c$ and $C$ (in the sense of positive $(1, 1)$ forms) where $\omega_0 = d d^c |\cdot |^2$ is the standard Euclidean K\"{a}hler form.   For any $1 \leq p \leq \infty$ and any positive Borel measure $\nu$ on $\C$,  let $\Lpphi(\nu)$ be the space defined by  \begin{equation*}  \Lpphi(\nu) := \{f \text{ measurable on } \C \text{ s.t. }  f(\cdot)e^{-\phi(\cdot)}  \in L^p(\C, d\nu). \} \end{equation*} Furthermore, let $\Lpphi$ be the space $\Lpphi(dv)$  and let $\Fpphi$ be the so called ``generalized Fock space" defined by \begin{equation*}  \Fpphi := \{f \text{ entire on } \C \text{ s.t. }  f \in \Lpphi \}. \end{equation*}     Note that the spaces $\Fpphi$ appear naturally in the study of the $\overline{\partial}$ equation and sampling/interpolation theory and have also been studied by numerous authors (see \cite{BO,C,D,L,OS,SV} for example, and in particular, see \cite{SV} for an excellent overview of the basic linear space properties of $\Fpphi$.)

Fix some real valued $\phi$ satisfying (\ref{PhiCond}) and let $k_z$ be the normalized reproducing kernel of $\Ftwophi$.  Furthermore, here and throughout the rest of the paper we will let $\langle \cdot, \cdot \rangle$ denote the canonical $\Ftwophi$ inner product. For a bounded operator $A$ on $\Fpphi$ with $1 < p < \infty$, let $B(A)$ again be the Berezin transform of $A$ defined on $\C$ by \begin{equation*} (B(A))(z) : = \langle Ak_z, k_z \rangle. \end{equation*} Note that H\"{o}lder's inequality and Theorem \ref{FockSpacePropThm} of Section $2$ immediately implies that $B(A)$ is a bounded function on $\C$ and that $B(A)$ vanishes at infinity when $1 < p < \infty$ and $A$ is compact on $\Fpphi$.

Now suppose that $\mu$ is a complex Borel measure on $\C$ in the sense that $\mu$ can be written as $\mu = (\mu_1  - \mu_2) + i (\mu_3 - \mu_4)$ where $\mu_j, j = 1, \ldots, 4$ are positive $\sigma-$finite Borel measures on $\C$ (for example when $d\mu = f\, dv$ for $f \in L^1 _{\text{loc}}(\C)$.) Given such a complex Borel measure $\mu$ on $\C$ where $|\mu|$ is Fock-Carleson (see Section $2$ for precise definitions),  we define the Toeplitz operator $T_\mu$ with symbol $\mu$ by the equation \begin{equation*} (T_\mu f)(z) := \int_{\C} f(w) K(z, w) e^{- 2\phi(w)} \, d\mu(w) \end{equation*} where $K(z, w)$ is the reproducing kernel of $\Ftwophi$. Furthermore, if $\mu$ is given by $\mu = f \, dv$ for a measurable function $f$ on $\C$, then we write $T_f$ instead of $T_\mu$. Also note that if $|\mu|$ is Fock-Carleson, then an easy application of Fubini's theorem gives us that $B(T_\mu) = B(\mu)$ where $B(\mu)$ is the Berezin transform of $\mu$ given by \begin{equation*} (B(\mu))(z) = \int_{\C} |k_z (w)|^2 \, d\mu(w). \end{equation*}

Given any ``nice" class $X$ of complex Borel measures on $\C$ (in the previously mentioned sense), let $\Tpphi(X)$ be the $\Fpphi$ operator norm closure of the algebra generated by $\{T_\mu : \mu \in X\}$.  Furthermore, let $\SLphi$ be the class of ``sufficiently localized'' operators $A$ where $A$ is bounded on $\Fqphi$ for some $2 \leq q < \infty$ and where \begin{equation} \label{SLCond} |\langle A k_z, k_w \rangle| \lesssim \frac{1}{(1 + |z - w|)^{2n + \delta}} \end{equation} for some $\delta = \delta(A) > 0$ independent of $z, w$.  Note that $\SLphi$ includes finite sums of finite products of Toeplitz operators with Fock-Carleson measures (see Propositions \ref{AlgProp} and \ref{ToepOpInSL} in Section \ref{Preliminary results}.)  Furthermore, note that any $A \in \SLphi$ extends to a bounded operator on $\Fpphi$ for any $1 \leq p \leq \infty$ and that $\SLphi$ is also a $*$-algebra (see Section \ref{Preliminary results}.)

The following two theorems can be considered the main results of this paper.

\begin{theorem} \label{CompOpSuff}   Let $1 < p < \infty$ and let $A \in \SLphi$.  Then there exists $R = R(A) >0$ where $A$ is compact if  \begin{equation} \label{CompOpSuffCond}\limsup_{|z| \rightarrow \infty} \sup_{w \in B(z, R)} |\langle Ak_z, k_w\rangle| = 0. \end{equation}  Furthermore, if $A$ is in the $\Ftwophi$ operator norm closure of $\SLphi$ then $A$ is compact on $\Ftwophi$ when (\ref{CompOpSuffCond}) holds.\end{theorem}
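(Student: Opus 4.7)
The plan is to adapt the frame-theoretic strategy of Xia and Zheng \cite{XZ} to the generalized Fock setting. The two ingredients are the polynomial off-diagonal decay (\ref{SLCond}) of the reproducing-kernel matrix $\langle Ak_z, k_w\rangle$ of $A$, which controls the behaviour of $A$ far from the diagonal of $\C \times \C$, and the hypothesis (\ref{CompOpSuffCond}), which forces this matrix to vanish at infinity in $z$ on a band of width $R$ about the diagonal.

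First I would fix a sufficiently dense separated lattice $\Gamma = \{z_j\} \subset \C$ (with respect to a Bergman-type metric adapted to $\phi$) whose normalized reproducing kernels $\{k_{z_j}\}$ furnish an $\ell^p$ atomic decomposition and frame for $\Fpphi$; such sampling lattices are standard in the generalized Fock setting (cf.\ \cite{OS,SV} and Section \ref{Preliminary results}). Via this decomposition, $A$ is represented by the discrete ``matrix'' $M_{jk} = \langle Ak_{z_k}, k_{z_j}\rangle$, and compactness of $A$ reduces to the existence of finite-matrix approximations to $M$ in the appropriate $\ell^p \to \ell^p$ operator norm.

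For the first assertion, pick $R = R(A)$ large in terms of the decay exponent $\delta = \delta(A)$ so that the Schur tail $\sup_j \sum_{k :\, |z_j - z_k| > R}(1+|z_j - z_k|)^{-(2n+\delta)}$ is small. Assuming (\ref{CompOpSuffCond}) with this $R$, for each $\epsilon > 0$ choose $N$ so that $|M_{jk}| < \epsilon$ whenever $|z_j| \geq N$ and $|z_j - z_k| \leq R$ (the symmetric statement in $z_k$ follows since $\SLphi$ is a $*$-algebra by Proposition \ref{AlgProp}). Then $M$ decomposes into a finite-rank block indexed by $B(0, N+R) \cap \Gamma$, a band contribution of width $R$ with entries of size $O(\epsilon)$ (controlled in norm by a direct Schur estimate), and a far-off-diagonal tail controlled by (\ref{SLCond}) and the choice of $R$. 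Letting $\epsilon \to 0$ and enlarging $R$ if necessary produces a sequence of finite-rank approximants converging to $A$ in $\Fpphi$ operator norm, so $A$ is compact.

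For the second assertion, let $A_n \in \SLphi$ with $\|A - A_n\|_{\Ftwophi \to \Ftwophi} \to 0$. Since the matrix entries of $A_n$ converge uniformly to those of $A$, condition (\ref{CompOpSuffCond}) transfers to each $A_n$ modulo an additive error $\|A - A_n\|$. Applying the first part to each $A_n$ (with its own radius $R(A_n)$) then produces compact operators approximating $A$ in norm, giving compactness of $A$ on $\Ftwophi$. The principal technical obstacle will be to carry out the Schur-test estimates above with constants uniform in the base point of $\Gamma$; in the classical Fock case this is automatic from translation invariance, but in the generalized setting it requires the quasi-isometric behaviour of the reproducing kernels in the Bergman-type metric induced by $\phi$, as guaranteed by the two-sided bound (\ref{PhiCond}). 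A secondary delicacy is that the fixed value of $R(A)$ makes the off-diagonal contribution a fixed quantity; this is circumvented either by telescoping the decomposition over a sequence of increasing radii or by absorbing the tail into the finite-rank block before passing to the limit.
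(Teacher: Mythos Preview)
Your overall strategy for the first assertion---discretize via a sampling lattice, split the matrix $(M_{jk})$ into a finite block, a near-diagonal band, and an off-diagonal tail controlled by Schur---is close in spirit to the paper's $p=2$ argument (Section~\ref{SectionProofs1}), which also uses a frame resolution of the identity and Schur estimates on the matrix $\langle A\Tk_v,\Tk_u\rangle$. The paper packages things differently, however: rather than bounding $\|A-(\text{finite rank})\|$, it proves the essential-norm inequality $\|A\|_\Q\lesssim\|F_{a;R}^*AF_{a+b;R}\|$ for suitable $a,b$ with $|a|\to\infty$ (Lemma~\ref{XiaLem3}), which immediately yields $\|A\|_\Q\lesssim R^{4n}\limsup_{|z|\to\infty}\sup_{w\in B(z,3R)}|\langle Ak_z,k_w\rangle|$ and hence compactness under (\ref{CompOpSuffCond}). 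This sidesteps the ``secondary delicacy'' you flag: since $R$ may depend on $\|A\|_\Q$, one chooses $R$ at the outset so that the tail is below $\tfrac12\|A\|_\Q$ and argues by contradiction; no telescoping is needed. For $p\neq2$ the paper abandons the frame picture entirely and instead uses a Mitkovski--Wick localization lemma (Lemma~\ref{MWLem}); your $\ell^p$-atomic approach is a plausible alternative, but the $\ell^p\to\ell^p$ Schur bounds would have to be spelled out.

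There is, however, a genuine gap in your argument for the second (closure) assertion. You say that condition (\ref{CompOpSuffCond}) ``transfers to each $A_n$ modulo an additive error $\|A-A_n\|$'' and then propose to invoke the first part for $A_n$. But the first part, as stated, requires the $\limsup$ to be \emph{exactly zero} at the specific radius $R(A_n)$, whereas you only know it is at most $\|A-A_n\|$ at the radius $R=R(A)$. Since $R(A_n)$ depends on $\delta(A_n)$ and on $\|A_n\|_\Q$ (neither of which you control), there is no reason these radii should match, and in any case a merely small (rather than zero) $\limsup$ does not trigger the conclusion. The paper's remedy is to build the approximant into the essential-norm estimate itself: in the proof of Lemma~\ref{XiaLem3} one first picks $B\in\SLphi$ with $\|A-B\|$ small, uses the decay (\ref{SLCond}) of $B$ to fix $R$ and discard off-diagonal blocks, but keeps $A$ (not $B$) in the surviving truncated matrix $F_{a;R}^*AF_{a+b;R}$, trading between $A$ and $B$ at a cost of $\|A-B\|$ at each step. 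The output is the bound $\|A\|_\Q\lesssim\|F_{a;R}^*AF_{a+b;R}\|$ \emph{for $A$ itself}, so that (\ref{CompOpSuffCond}) for $A$---never for any $A_n$---finishes the proof directly.
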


\begin{theorem} \label{CompOpNec} If $1 < p < \infty$ then the space of compact operators on $\Fpphi$ coincides with $\Tpphi( C_c ^\infty(\C))$.  Furthermore, the space of compact operators on either of the spaces $\Ftwophi$ for general $\phi$ satisfying (\ref{PhiCond}) or $\Fp$ (for $1 < p < \infty$)  coincides with the operator norm closure of the set $\{T_f : f \in C_c ^\infty(\C) \}$. \end{theorem}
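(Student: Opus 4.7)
The plan is to prove the two containments separately. For the easy direction $\mathcal{T}_\phi^p(C_c^\infty(\mathbb{C}^n)) \subseteq \mathcal{K}(\Fpphi)$, since the compact operators form a norm-closed two-sided ideal, it suffices to show that each generator $T_f$ with $f \in C_c^\infty$ is compact on $\Fpphi$. I would argue that if $g_n \rightharpoonup 0$ weakly in $\Fpphi$ then $g_n \to 0$ uniformly on compact subsets of $\mathbb{C}^n$ via a normal families argument using the pointwise reproducing kernel bounds set up in Section 2; since $\operatorname{supp} f$ is compact, this forces $M_f g_n \to 0$ in $\Lpphi$, hence $T_f g_n = P M_f g_n \to 0$ in norm.

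For the converse $\mathcal{K}(\Fpphi) \subseteq \mathcal{T}_\phi^p(C_c^\infty(\mathbb{C}^n))$ my plan is a two-stage approximation. Given a compact $A$, I would first represent $A$ as an operator norm limit $A = \lim_n \sum_j \prod_k T_{f_{j,k,n}}$ with $f_{j,k,n} \in L^\infty(\mathbb{C}^n)$ (i.e.\ $A \in \mathcal{T}_\phi^p(L^\infty(\mathbb{C}^n))$) by combining Theorem \ref{CompOpSuff} with the essential norm estimates developed earlier in the paper along the lines of \cite{BI, XZ, MW}. To upgrade the symbols from $L^\infty$ to $C_c^\infty$ I would then sandwich and truncate: choosing $\eta_R \in C_c^\infty(\mathbb{C}^n)$ with $\eta_R \equiv 1$ on $B(0,R)$ and supported in $B(0,2R)$, I would first show $T_{\eta_R} A T_{\eta_R} \to A$ in operator norm as $R \to \infty$ (using compactness of $A$ together with the strong convergence $T_{\eta_R} \to I$), and then replace each inner $T_{f_{j,k,n}}$ by $T_{\eta_{R'} f_{j,k,n}}$ with $R' \gg R$, obtaining an operator in $\mathcal{T}_\phi^p(C_c^\infty)$; the outer cutoffs together with the off-diagonal kernel decay should upgrade the resulting strong operator convergence to operator norm convergence as $R' \to \infty$.

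For the stronger claim on $\Ftwophi$ and on $\Fp$ (that the bare norm closure of $\{T_f : f \in C_c^\infty(\mathbb{C}^n)\}$, without algebra operations, already fills out the compacts), I would exploit the extra structure of each space. On $\Ftwophi$, finite-rank operators are norm dense in $\mathcal{K}(\Ftwophi)$ and reproducing kernels span $\Ftwophi$ densely, so the problem reduces to approximating in operator norm each rank-one operator $K(\cdot, a) \otimes K(\cdot, b)$ by $T_h$ with $h \in C_c^\infty$. Taking $h_\epsilon \in C_c^\infty$ concentrated near $a$ and rescaled so that $T_{h_\epsilon}$ tends to the rank-one map $f \mapsto f(a) K(\cdot, a) e^{-2\phi(a)}$, I would verify operator norm convergence via a Hilbert--Schmidt estimate using the off-diagonal decay of $K$. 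On $\Fp$ I would additionally invoke translation invariance by Weyl operators $W_a$, which conjugate Toeplitz operators to translated Toeplitz operators; this reduces the rank-one approximation at arbitrary $a$ to the base case $a = 0$, and density of finite-rank operators in $\mathcal{K}(\Fp)$ is supplied by the monomial Schauder basis.

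The main obstacle I anticipate is the operator norm convergence in the truncation step of the converse argument. Replacing $T_f$ by $T_{\eta_{R'} f}$ only gives strong operator convergence in general (witness $f \equiv 1$, where $T_{\eta_{R'}} \to I$ strongly but not in norm), so the outer cutoffs $T_{\eta_R}$ must genuinely carry the norm gain. Proving this requires a quantitative estimate of the form
\[
\lim_{R' \to \infty} \|T_{\eta_R} T_{(1 - \eta_{R'}) f} B T_{\eta_R}\|_{\Fpphi \to \Fpphi} = 0
\]
uniformly in bounded $B$ with $R$ fixed, which I expect to reduce to a Schur-test calculation combining the off-diagonal decay of the reproducing kernel $K$ with the Fock--Carleson theory from Section 2.
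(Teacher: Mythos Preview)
Your proposal has genuine gaps in both the main converse and in the stronger claim.

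\textbf{Part 1, converse.} You plan to first show that any compact $A$ lies in $\Tpphi(L^\infty(\C))$ ``by combining Theorem~\ref{CompOpSuff} with the essential norm estimates.'' But Theorem~\ref{CompOpSuff} is a \emph{sufficient} condition for compactness of operators already known to lie in $\SLphi$; it does not produce Toeplitz approximants for an arbitrary compact operator. In the general $\Fpphi$ setting there is no prior result in the paper asserting $\mathcal{K}(\Fpphi)\subseteq\Tpphi(L^\infty)$---that is essentially what you are trying to prove, so this step is circular. The paper avoids this entirely: it observes that rank-one operators $K(\cdot,z)\otimes K(\cdot,w)$ equal $c\,T_{\delta_z}T_{\delta_w}$, approximates each $T_{\delta_z}$ in operator norm by $T_{c_n\epsilon^{-2n}\chi_{B(z,\epsilon)}}$ (a direct self-adjoint estimate using the Lipschitz bound in Theorem~\ref{FockSpacePropThm}$(g)$), and then passes from $L^1$ symbols to $C_c^\infty$ symbols via $\|T_f\|\lesssim\|f\|_*$. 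Density of finite-rank operators in $\mathcal{K}(\Fpphi)$ then finishes. No sandwiching or cutoff argument is needed.

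\textbf{Part 2, the stronger claim.} Your approach here fails for a concrete reason. If $h_\epsilon\in C_c^\infty$ concentrates at $a$, then $T_{h_\epsilon}$ converges to (a multiple of) the \emph{self-adjoint} rank-one operator $K(\cdot,a)\otimes K(\cdot,a)$, never to $K(\cdot,a)\otimes K(\cdot,b)$ with $a\neq b$. Weyl translation on $\Fp$ does not help: conjugating by $W_a$ moves the diagonal but cannot produce off-diagonal rank-ones from a single $T_h$. This is precisely why the paper needs different machinery. On $\Ftwophi$ it uses trace duality: if $\{T_f:f\in C_c^\infty\}$ were not norm dense in the compacts, Hahn--Banach would give a nonzero trace-class $X$ with $\tr(T_gX)=0$ for all $g\in C_c^\infty$, forcing $K_X(w,w)\equiv 0$ and hence $X=0$ by analytic continuation. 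On $\Fp$ it uses the Berezin--Toeplitz composition formula $T_fT_g=T_{f\sharp_\alpha g}$ (valid for Schwartz-class symbols) to collapse the products $T_fT_g$ that arose in Part~1 down to single Toeplitz operators; this formula is specific to the Gaussian weight and has no analogue for general $\phi$.
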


Note that condition (\ref{CompOpSuffCond}) is significantly weaker than the so-called ``reproducing kernel thesis condition" that often appears in the literature (see \cite{MW} for example), which says that \begin{equation*} \lim_{|z| \rightarrow \infty} \|Ak_z\|_{\Fpphi} = 0. \end{equation*}   In particular, if $1 < p < \infty$, then $f)$ and $g)$ in Theorem \ref{FockSpacePropThm} gives us that for any $R > 0$ \begin{align} \label{RepKerThesisEst} \lim_{|z| \rightarrow \infty} \sup_{w \in B(z, R)} |\langle A k_z, k_w \rangle| & \approx \lim_{|z| \rightarrow \infty} \sup_{w \in B(z, R)} | A k_z (w) | e^{-\phi(w)} \nonumber \\ & \leq \limsup_{|z| \rightarrow \infty}  \| A k_z \|_{\Finfphi} \nonumber \\ & \lesssim \limsup_{|z| \rightarrow \infty}  \| A k_z \|_{\Fpphi}. \nonumber \end{align}

However, if we assume the existence of a uniformly bounded family of operators $\{U_z\}_{z \in \C} $ on both $\Fpphi$ and $\Fqphi$ (with $q$ being the conjugate exponent of $p$) where \begin{equation} (U_z k_w) (u)  = \Theta(z, w) k_{z - w} (u)  \label{UzDef} \end{equation} with $|\Theta(\cdot, \cdot)|$ bounded above and below on $\C \times \C$, then we will give a very short and easy proof of the following result:

\begin{proposition} \label{BerProp} Assume that there exists a uniformly bounded family of operators $\{U_z\}_{z \in \C} $ on both $\Fpphi$ and $\Fqphi$ satisfying (\ref{UzDef}).  Then for any bounded $A$ on $\Fpphi$, we have that $B(A)$ vanishes at infinity if and only if $A$ satisfies (\ref{CompOpSuffCond}) for any $R > 0$.  \end{proposition}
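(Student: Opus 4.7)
The plan is as follows. The implication from \eqref{CompOpSuffCond} to the vanishing of $B(A)$ at infinity is immediate on taking $w=z$, since then $|B(A)(z)| \le \sup_{w\in B(z,R)}|\langle Ak_z, k_w\rangle|$. For the reverse implication assume $B(A)(z)\to 0$ as $|z|\to\infty$, fix $R>0$, and write $w = z-a$ with $|a|\le R$. Using \eqref{UzDef} we have $k_z = \Theta(z,0)^{-1} U_z k_0$ and $k_{z-a} = \Theta(z,a)^{-1} U_z k_a$, which gives
\begin{equation*}
\langle A k_z, k_{z-a}\rangle = \frac{1}{\Theta(z,0)\,\overline{\Theta(z,a)}}\,\langle A_z k_0, k_a\rangle, \qquad A_z := U_z^* A U_z.
\end{equation*}
Since $|\Theta|$ is bounded below and $\{U_z\}$ is uniformly bounded on $\Fpphi$, the family $\{A_z\}$ is uniformly bounded on $\Fpphi$, and it suffices to prove $\sup_{|a|\le R}|\langle A_z k_0, k_a\rangle|\to 0$. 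A direct computation also gives $B(A_z)(w) = |\Theta(z,w)|^2 B(A)(z-w)$, so $B(A_z)(w)\to 0$ for every $w\in\C$ as $|z|\to\infty$.

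I would then argue by contradiction. If the supremum did not tend to $0$, we could extract $\epsilon>0$, sequences $z_n\to\infty$ and $a_n\to a_\infty\in\overline{B(0,R)}$ with $|\langle A_{z_n} k_0, k_{a_n}\rangle|\ge\epsilon$; using that bounded subsets of $\mathcal{L}(\Fpphi)$ are sequentially compact in the weak operator topology (by separability and reflexivity of $\Fpphi$ for $1<p<\infty$), we may pass to a further subsequence along which $A_{z_n}\to A_\infty$ in WOT. Then $B(A_\infty)(w)=\lim_n B(A_{z_n})(w)=0$ for every $w\in\C$, and injectivity of the Berezin transform on $\mathcal{L}(\Fpphi)$---which follows from the fact that $(z,w)\mapsto\langle A_\infty K_z, K_w\rangle$ is anti-holomorphic in $z$ and holomorphic in $w$ and vanishes on the totally real set $\{w=z\}$, combined with density of the linear span of $\{K_w\}$ in $\Fpphi$---forces $A_\infty=0$. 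On the other hand, WOT convergence combined with the $\Fqphi$-norm continuity of $a\mapsto k_a$ gives $\langle A_{z_n} k_0, k_{a_n}\rangle\to\langle A_\infty k_0, k_{a_\infty}\rangle = 0$, contradicting $|\langle A_{z_n} k_0, k_{a_n}\rangle|\ge\epsilon$.

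The main obstacles are the sequential WOT-compactness of $\{A_z\}$ and the injectivity of the Berezin transform on bounded operators; both should be standard facts in generalized Fock spaces under \eqref{PhiCond}. The essential idea is that the translation-type operators $U_z$ allow one to transport the off-diagonal matrix element $\langle Ak_z, k_{z-a}\rangle$ at a point going to infinity into the matrix element of a new bounded operator $A_z$ at bounded points, where the vanishing of $B(A)$ at infinity precisely matches the vanishing of $B(A_z)$ on compact sets.
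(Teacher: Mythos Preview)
Your proof is correct and follows essentially the same route as the paper's: both argue the easy direction by setting $w=z$, and for the converse both conjugate $A$ by $U_z$ to form $A_z = U_z^* A U_z$, extract a WOT-limit $A_\infty$ along a subsequence with $z_n\to\infty$ and $a_n\to a_\infty$, observe that $B(A_\infty)\equiv 0$ by the assumed vanishing of $B(A)$ at infinity, invoke injectivity of the Berezin transform to conclude $A_\infty=0$, and reach a contradiction via norm continuity of $a\mapsto k_a$. Your justification of sequential WOT-compactness via separability and reflexivity is in fact slightly more careful than the paper's appeal to a Banach--Alaoglu-type argument, and your sketch of Berezin-transform injectivity matches the standard argument given at the end of Section~\ref{SectionProofs2}.
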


 In the $\Fp$ setting, note that these operators are classical and in particular are the ``weighted translations" defined by \begin{equation*} (U_z h)(w) = h(z - w) k_z (w) \end{equation*} that satisfy $U_z ^* = U_z = U_z ^{-1}$. Furthermore, note that the existence of a uniformly bounded family of operators $\{U_z \}_{z \in \C}$ on both $\Fpphi$ and $\Fqphi$ satisfying (\ref{UzDef}) is often taken as an assumption when proving results about Banach or Hilbert spaces of analytic functions (see \cite{MW} for example.)  For this reason, it is rather remarkable that one can prove Theorem \ref{CompOpSuff} for $p = 2$ without assuming the existence of a uniformly bounded family of operators $\{U_z\}_{z \in \C}$ on $\Ftwophi$ satisfying (\ref{UzDef}).  Also note that Theorem \ref{CompOpNec} was proved in the $\Ft$ setting in \cite{BC}.  Despite this, it is noteworthy that both Theorems \ref{CompOpSuff} and \ref{CompOpNec} are new even in the $\Fp$ setting when $p \neq 2$.

Now if $f \in C_c ^\infty (\C)$ then it is easy to see that $T_f$ is compact on $\Fpphi$ (in fact, $T_f$ is trace class on $\Ftwophi$ if $f \in C_c ^\infty (\C)$, see the end of Section \ref{SectionProofs2} for an easy proof.) Thus, in light of Theorem \ref{CompOpNec}, Theorem \ref{BIThm} can be restated as an approximation result that says that if $A \in \Tp (L^\infty(\C))$ (which in fact as a set is equal to $\Tp (\{\mu : |\mu| \text{ is Fock-Carleson}\})$, see \cite{BI}) and $B(A)$ vanishes at infinity, then $A$ is in fact in the norm closure of the set $\{T_f : f \in C_c ^\infty(\C) \}$ when $1 < p < \infty$.

In addition to proving Theorems \ref{CompOpSuff} and \ref{CompOpNec}, we will also prove some very natural essential norm estimates for both operators in the $\Fpphi$ operator norm closure of $\SLphi$ and for general bounded operators on $\Fpphi$.  In particular, we will prove the following two theorems, the first of which is a generalization of some of the essential norm estimates in \cite{BI} and the second of which is a strong generalization of the essential norm estimates for Toeplitz operators on the unweighted Bergman space from \cite{MZ} (that were proved using vastly different techniques that the ones we use here.)  It is rather interesting to note that both of these theorems are new in the $\Fp$ setting (and in some instances are even new for $p = 2$.)

\begin{theorem}  \label{GenEssNormProp} If $1 < p < \infty$ and $A$ bounded on $\Fpphi$ then \begin{equation} \label{GenEssNormEst1} \|A\|_\Q \approx \lim_{r \rightarrow \infty} \|M_{\chi_{B(0, r) ^c}} A \|_{\Fpphi \rightarrow \Lpphi}.  \end{equation}   Moreover, if $A$ is in the $\Fpphi$ operator norm closure of $\SLphi$ then we also have \begin{equation} \label{GenEssNormEst2} \|A\|_\Q \approx \sup_{d > 0} \limsup_{|z| \rightarrow \infty} \|M_{\chi_{B(z, d)}}  A P M_{\chi_{B(z, 2d)}} \|_{\Fpphi \rightarrow \Lpphi}. \end{equation} \end{theorem}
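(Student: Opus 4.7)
The plan is to establish each equivalence via matching upper and lower bounds. For the ``$\gtrsim$'' direction of (\ref{GenEssNormEst1}), I fix any compact $K$ on $\Fpphi$ and write $M_{\chi_{B(0,r)^c}}A = M_{\chi_{B(0,r)^c}}(A-K) + M_{\chi_{B(0,r)^c}}K$. Since $\|M_{\chi_{B(0,r)^c}}\|_{\Lpphi\to\Lpphi}\le 1$ and the inclusion $\Fpphi\hookrightarrow\Lpphi$ is contractive, the first summand has $\Fpphi\to\Lpphi$ norm at most $\|A-K\|_{\Fpphi\to\Fpphi}$, while the second tends to zero as $r\to\infty$ because $M_{\chi_{B(0,r)^c}}\to 0$ strongly on $\Lpphi$ by dominated convergence, uniformly on the precompact image $K(\overline{B_{\Fpphi}(0,1)})$. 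Taking $\inf_K$ produces $\|A\|_\Q$. For the ``$\lesssim$'' direction, $A=PA=PM_{\chi_{B(0,r)}}A+PM_{\chi_{B(0,r)^c}}A$, and the first summand is compact since $PM_{\chi_{B(0,r)}}$ is the Toeplitz operator with bounded compactly supported symbol $\chi_{B(0,r)}$ (compact on $\Fpphi$ via a Montel-type tightness argument combined with the off-diagonal reproducing-kernel decay of Theorem \ref{FockSpacePropThm}); hence $\|A\|_\Q\le \|P\|_{\Lpphi\to\Fpphi}\|M_{\chi_{B(0,r)^c}}A\|_{\Fpphi\to\Lpphi}$, and $r\to\infty$ closes (\ref{GenEssNormEst1}).

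The ``$\gtrsim$'' direction of (\ref{GenEssNormEst2}) follows the same template: for compact $K$,
\[
\|M_{\chi_{B(z,d)}}APM_{\chi_{B(z,2d)}}\|_{\Fpphi\to\Lpphi} \le \|A-K\|_{\Fpphi\to\Fpphi}\|PM_{\chi_{B(z,2d)}}\|_{\Fpphi\to\Fpphi}+\|KPM_{\chi_{B(z,2d)}}\|_{\Fpphi\to\Fpphi}.
\]
The first factor is uniformly bounded in $z$, and $\|KPM_{\chi_{B(z,2d)}}\|\to 0$ as $|z|\to\infty$ since $PM_{\chi_{B(z,2d)}}\to 0$ strongly on $\Fpphi$ (dominated convergence on the moving ball) and $K$ is compact; the standard finite-rank-approximation argument, combined with the fact that the adjoint family is also SOT-null on $\Fqphi$, then yields operator-norm convergence. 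Taking $\limsup_z$, $\inf_K$, and $\sup_d$ completes this direction.

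The principal obstacle is the ``$\lesssim$'' direction of (\ref{GenEssNormEst2}). By (\ref{GenEssNormEst1}) it reduces to dominating $\lim_{r\to\infty}\|M_{\chi_{B(0,r)^c}}A\|_{\Fpphi\to\Lpphi}$ by the right-hand side of (\ref{GenEssNormEst2}). After approximating $A$ in operator norm by $A_0\in\SLphi$, the sufficiently localized hypothesis combined with parts $f)$ and $g)$ of Theorem \ref{FockSpacePropThm} yields the pointwise estimate
\[
|(A_0 f)(u)|\,e^{-\phi(u)} \lesssim \int_{\C} |f(v)|e^{-\phi(v)}(1+|u-v|)^{-2n-\delta}\,dv(v).
\]
Fix $d>0$ large, cover $\C$ by a bounded-overlap lattice $\{B(z_j,d)\}$, and on each ball split
\[
M_{\chi_{B(z_j,d)}}A_0 f = M_{\chi_{B(z_j,d)}}A_0 PM_{\chi_{B(z_j,2d)}}f + M_{\chi_{B(z_j,d)}}A_0 PM_{\chi_{B(z_j,2d)^c}}f.
\]
The tail pieces are controlled by Young's inequality applied to the truncated convolution kernel $(1+|\cdot|)^{-2n-\delta}\mathbf{1}_{\{|\cdot|>d\}}$, whose $L^1(\C)$-mass is $\lesssim d^{-\delta}$, giving $\sum_j \|M_{\chi_{B(z_j,d)}}A_0 PM_{\chi_{B(z_j,2d)^c}}f\|_{\Lpphi}^p \lesssim d^{-\delta p}\|f\|_{\Fpphi}^p$. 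The main pieces are bounded by $\|M_{\chi_{B(z_j,d)}}A_0 PM_{\chi_{B(z_j,2d)}}\|$ applied to the localized input $\chi_{B(z_j,2d)}f\in\Lpphi$, summed via $\sum_j \|\chi_{B(z_j,2d)}f\|_{\Lpphi}^p\lesssim\|f\|_{\Fpphi}^p$; taking $r$ large so every relevant $|z_j|$ is large bounds this by the $\limsup$ in (\ref{GenEssNormEst2}), and $d\to\infty$ removes the $d^{-\delta}$ error. The delicate technical point is passing from the $\Fpphi\to\Lpphi$ operator norm in the theorem statement to the norm of the same operator acting on general $\Lpphi$-inputs supported in $B(z,2d)$, uniformly in $z$; this identification is what makes the per-ball bound compatible with the $\ell^p$ summation over the lattice.
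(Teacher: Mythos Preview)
Your argument follows essentially the same architecture as the paper's: both directions of \eqref{GenEssNormEst1} are handled identically, and for the hard direction of \eqref{GenEssNormEst2} you approximate by $A_0\in\SLphi$, cover $\C$ by a bounded-overlap lattice, and split into a diagonal piece and a tail controlled by a negative power of $d$. The paper packages the tail estimate as a separate lemma (Lemma~\ref{GenEssNormLem2}), proved by the Schur test at $p=2$ followed by complex interpolation to the endpoints; your direct use of Young's inequality on the convolution kernel $(1+|\cdot|)^{-2n-\delta}\mathbf{1}_{\{|\cdot|>d\}}$ accomplishes the same thing in one stroke for all $p$ and is arguably cleaner. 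For the \emph{upper} bound in \eqref{GenEssNormEst2} the paper's route is shorter than yours: once $|z|>R+d$ one has $B(z,d)\subset B(0,R)^c$, hence $\|M_{\chi_{B(z,d)}}APM_{\chi_{B(z,2d)}}\|\lesssim\|M_{\chi_{B(0,R)^c}}A\|$ directly, and \eqref{GenEssNormEst1} finishes; no adjoint/SOT argument is needed.

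The ``delicate technical point'' you flag at the end is genuine and is not addressed in your write-up (nor, to be fair, is it made explicit in the paper). The $\ell^p$ summation over the lattice requires
\[
\|M_{\chi_{B(z_j,d)}}A_0PM_{\chi_{B(z_j,2d)}}(\chi_{B(z_j,2d)}f)\|_{\Lpphi}\le \|M_{\chi_{B(z_j,d)}}A_0PM_{\chi_{B(z_j,2d)}}\|\,\|\chi_{B(z_j,2d)}f\|_{\Lpphi},
\]
and since $\chi_{B(z_j,2d)}f\notin\Fpphi$, the operator norm you actually use here is the $\Lpphi\to\Lpphi$ norm, not the $\Fpphi\to\Lpphi$ norm appearing in the statement. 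The clean resolution is simply to read the quantity in \eqref{GenEssNormEst2} as $\|M_{\chi_{B(z,d)}}APM_{\chi_{B(z,2d)}}\|_{\Lpphi\to\Lpphi}$ (the operator is naturally defined on all of $\Lpphi$ because of the $P$), in which case both your summation and the upper-bound direction go through without change; this is implicitly what the paper does as well.
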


\begin{theorem} \label{ToepEssNormThm} Let $ 0 < \delta < 1$ and let $\mu$ be a complex Borel measure where $|\mu|$ is Fock-Carleson with $\|\mu\|_* \leq 1$. If $1 < p < \infty$ then there exists $C = C_\delta$ independent of $\mu$ where
 \begin{equation*}    \|T_\mu \|_{\Q}  \lesssim         C_\delta \left(\limsup_{|z|, |w| \rightarrow \infty}   |\langle T_\mu k_z, k_w \rangle| \right)^{\delta}. \end{equation*}

 \end{theorem}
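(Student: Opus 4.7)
The plan is to combine the essential norm formula \eqref{GenEssNormEst2} from Theorem \ref{GenEssNormProp} with Gaussian off-diagonal decay of the matrix coefficients of Toeplitz operators. Since $T_\mu \in \SLphi$ by Proposition \ref{ToepOpInSL}, \eqref{GenEssNormEst2} gives
\begin{equation*}
\|T_\mu\|_\Q \approx \sup_{d > 0} \limsup_{|z| \to \infty} \|M_{\chi_{B(z,d)}} T_\mu P M_{\chi_{B(z,2d)}}\|_{\Fpphi \to \Lpphi},
\end{equation*}
so it suffices to bound each of these localized norms by a constant $C_\delta$ times $\bigl(\limsup_{|z|,|w|\to\infty}|\langle T_\mu k_z, k_w\rangle|\bigr)^\delta$.

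The key observation is that, by Fubini applied to the definition of $T_\mu$,
\begin{equation*}
\langle T_\mu k_z, k_w \rangle = \int_\C k_z(u)\,\overline{k_w(u)}\, e^{-2\phi(u)}\, d\mu(u),
\end{equation*}
and combined with the pointwise bound $|k_z(u)|\,e^{-\phi(u)} \lesssim e^{-c|u-z|^2}$ for the normalized kernel in the generalized Fock setting (under \eqref{PhiCond}), the parallelogram identity $|u-z|^2 + |u-w|^2 = 2|u - \tfrac{z+w}{2}|^2 + \tfrac{1}{2}|z-w|^2$, and the Fock-Carleson hypothesis on $\mu$, this yields
\begin{equation*}
|\langle T_\mu k_z, k_w\rangle| \lesssim \|\mu\|_*\, e^{-c|z-w|^2/2}.
\end{equation*}
Writing $M_\infty := \limsup_{|z|,|w|\to\infty}|\langle T_\mu k_z, k_w\rangle|$, for any $\epsilon > 0$ there is $R$ such that $|z|,|w| > R$ forces $|\langle T_\mu k_z, k_w\rangle| \leq M_\infty + \epsilon$. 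Splitting $|\langle T_\mu k_z, k_w\rangle|$ as $|\langle T_\mu k_z, k_w\rangle|^\delta \cdot |\langle T_\mu k_z, k_w\rangle|^{1-\delta}$ and applying one bound to each factor gives, for any $\delta \in (0,1)$,
\begin{equation*}
|\langle T_\mu k_z, k_w\rangle| \lesssim (M_\infty + \epsilon)^\delta\, e^{-c(1-\delta)|z-w|^2/2} \quad \text{whenever } |z|,|w| > R.
\end{equation*}

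To convert this pointwise bound into the operator norm estimate, I would discretize $M_{\chi_{B(z,d)}} T_\mu P M_{\chi_{B(z,2d)}}$ via a sufficiently dense sampling lattice $\{z_j\} \subset \C$ for $\Fpphi$, as developed in Section $2$. For $|z|$ large, every lattice point contributing to the localized operator lies outside $B(0,R)$, so the interpolated Gaussian bound applies to every relevant matrix entry. Schur's test, in the $L^p$ form obtained by interpolating between $\ell^1$ and $\ell^\infty$, then controls the resulting matrix norm by a constant $C_\delta$ times $(M_\infty + \epsilon)^\delta$ uniformly in $d$, since the kernel $e^{-c(1-\delta)|u-v|^2/2}$ has uniformly bounded row and column sums (though the bound blows up as $\delta \to 1$, accounting for the $\delta$-dependence of $C_\delta$). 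Sending $\epsilon \to 0$ and supping over $d$ completes the proof.

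The main obstacle is the final step: executing the lattice discretization cleanly for the composite operator $M_{\chi_{B(z,d)}} T_\mu P M_{\chi_{B(z,2d)}}$, which involves the non-self-adjoint Bergman-type projection $P$ on $\Lpphi$, and tracking the constants carefully enough to preserve the $\delta$ exponent and extract the precise blow-up of $C_\delta$. A secondary point is locating the pointwise Gaussian bounds on $|k_z(u)|\,e^{-\phi(u)}$ for $\phi$ merely satisfying \eqref{PhiCond}, which should be among the basic estimates collected in Section $2$.
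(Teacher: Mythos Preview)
Your approach is sound and genuinely different from the paper's, but it contains one factual error that you should correct: the Gaussian pointwise bound $|k_z(u)|e^{-\phi(u)} \lesssim e^{-c|u-z|^2}$ is \emph{not} available for general $\phi$ satisfying \eqref{PhiCond}. Theorem~\ref{FockSpacePropThm}(a) only gives exponential decay $e^{-\epsilon|u-z|}$, and the paper explicitly remarks that one cannot expect Gaussian decay in this generality. Fortunately this does not break your argument: the proof of Proposition~\ref{ToepOpInSL} already gives $|\langle T_\mu k_z, k_w\rangle| \lesssim \|\mu\|_* e^{-\epsilon|z-w|/2}$ via the triangle inequality (no parallelogram identity needed), and your interpolation trick then yields $|\langle T_\mu k_z, k_w\rangle| \lesssim (M_\infty+\epsilon)^\delta e^{-\epsilon(1-\delta)|z-w|/2}$ for $|z|,|w|>R$, which is all you need. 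Also, the ``main obstacle'' you worry about is more tractable than you suggest: the composite operator $M_{\chi_{B(z,d)}}T_\mu P M_{\chi_{B(z,2d)}}$ has integral kernel $\chi_{B(z,d)}(w)\chi_{B(z,2d)}(u)\langle T_\mu K(\cdot,u),K(\cdot,w)\rangle$, and the Schur test with weight $e^{\phi}$ applies directly (exactly as in the proof of Lemma~\ref{GenEssNormLem2}) without any lattice discretization; interpolation with the trivial $L^1$ and $L^\infty$ bounds then handles $p\neq 2$.

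The paper takes a rather different route. Instead of interpolating between the two pointwise bounds on the matrix coefficients, it invokes Lemma~\ref{MWLem} and the argument from the $p\neq 2$ proof of Theorem~\ref{CompOpSuff} to obtain the self-referential inequality $\|T_\mu\|_\Q \lesssim R^{2n}\limsup_{|z|,|w|\to\infty}|\langle T_\mu k_z,k_w\rangle|$, where the localization radius $R$ can be chosen so that $R\approx -\ln\|T_\mu\|_\Q$ (this is where the exponential decay of Proposition~\ref{ToepOpInSL} enters). The elementary inequality $u/(\ln u)^{2n}\geq C_\delta u^{1/\delta}$ for $0<u<1$ then decouples the two sides. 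Your argument is arguably cleaner: it avoids the self-referential step and makes the dependence of $C_\delta$ on $\delta$ (through $\int e^{-\epsilon(1-\delta)|v|/2}\,dv$) completely explicit. The paper's approach, on the other hand, reuses machinery already built for Theorem~\ref{CompOpSuff} and so is shorter in context.
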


 Furthermore,  we will extend the essential norm estimates in \cite{MW} to the $p \neq 2$ case $\Fpphi$ setting, which in particular (in conjunction Proposition \ref{BerProp}) provides us with a very short proof of Theorem \ref{BIThm} for $p \neq 2$  when compared to the proof of Theorem \ref{BIThm} from \cite{BI} (note that a similar simplification when $p = 2$ was also provided in \cite{MW}.)

 We will now briefly outline the structure of this paper. The next section will discuss some preliminary results that will be used throughout the rest of the paper (including a brief discussion of Fock-Carleson measures and the short proof of Proposition \ref{BerProp}.)  In Section \ref{SectionProofs1}, we will prove Theorem \ref{CompOpSuff}  when $p = 2$.  Although the proof of this result uses the frame theoretic ideas from \cite{XZ}, the details of the arguments in Section \ref{SectionProofs2} are considerably simpler and more transparent than the details in \cite{XZ}.  Section \ref{SectionProofs2} will contain the proof of Theorem \ref{CompOpNec}, and in Section \ref{EssNormSec} we will prove Theorem \ref{CompOpSuff} when $p \neq 2$ and also prove Theorems \ref{GenEssNormProp} and \ref{ToepEssNormThm} by extending the ideas and essential norm estimates from \cite{MW} to the $\Fpphi$ setting.  Finally Section \ref{OpenProblems} will discuss interesting open questions related to the results of this paper.

 Note that we will write $A \lesssim B$ for two quantities $A$ and $B$ if there exists an unimportant constant $C$ such that $A \leq C B$.  Furthermore, $B \lesssim A$ is defined similarly and we will write $A \approx B$ if $A \lesssim B$ and $B \lesssim A$.

Finally in this introduction we will briefly discuss a concrete and interesting (from the point of view of holomorphic function spaces) example of a generalized Fock space.  In particular, we will now show that the Fock-Sobolev spaces introduced recently in \cite{CZ} are in fact generalized Fock spaces. Given any $m \in \N$, let $F_\alpha ^{p, m}$ denote the Fock-Sobolev space of entire functions with the norm \begin{equation*} \|f\|_{F_\alpha ^{p, m}} :=\sum_{|\beta| \leq m} \|\partial ^\beta f \|_{\Fp} \end{equation*} where the sum is over all multiindicies $\beta$ with $|\beta| \leq m$.   It was then proved in \cite{CZ} that $f \in F_\alpha ^{p, m}$ if and only if $z\mapsto |z|^m f(z) \in \Lp$ where $\Lp := \Lpphi$ for $\phi(z) = \frac{\alpha}{2} |z|^2$, and furthermore the canonical norms induced by these two conditions are equivalent (note that this was only proved for $\alpha = 1$ but the extension to general $\alpha > 0 $ is trivial.)

By a standard closed graph theorem argument, we have that $f \in F_\alpha ^{p, m}$ if and only if $z \mapsto (A + |z|^2 )^\frac{m}{2} f(z) $ is in $\Lp$ for any $A > 0$, and furthermore the canonical norm induced by this condition (for fixed $A > 0$) is equivalent to the $F_\alpha ^{p, m}$ norm.   Thus, if \begin{equation*} \phi(z) := \frac{\alpha}{2} |z|^2 - \frac{m}{2} \ln (A + |z|^2)  \end{equation*} then we have $F_\alpha ^{p, m} = \Fpphi$ and \begin{equation*}  d d^c \phi (z) = \sum_{j, k = 1}^n  \left(\frac{\alpha}{4} \delta_{kj}-  \frac{m ((A + |z|^2) \delta_{kj} - z_j \overline{z_k})}{4(A + |z|^2)^2} \right) dz_k \wedge d\overline{z_j} \end{equation*}  which by an application of the Cauchy-Schwartz inequality gives us that \begin{equation*}   \left(\frac{\alpha}{4} -  \frac{m}{4(A + |z|^2)}\right) \omega_0   \leq   d d^c \phi  \leq \left(\frac{\alpha}{4} -  \frac{m A}{4(A + |z|^2)^2}\right) \omega_0. \end{equation*}  Thus, we have that $\phi$ satisfies condition (\ref{PhiCond}) if $A > 2m/\alpha$.  Because of this, the reader should keep in mind that all of the results proved in this paper also apply to Fock-Sobolev spaces (which by themselves for Fock-Sobolev spaces are interesting in their own right.) \\

\textit{Remark: } Well after this paper was written, the author in collaboration with B. Wick and M. Mitkovski was able to prove that if $1 < p < \infty$ and $A$ is in the $\Fpphi$ operator norm closure of $\SLphi$, then there exists $R = R(A) > 0$ where $A$ is compact if (\ref{CompOpSuffCond}) is true.  In fact, one can even replace the conditions defining $\SLphi$ by similar but weaker integral conditions (see \cite{IMW} for details.)

\section{Preliminary results} \label{Preliminary results}  In this section, we will state and prove some preliminary results that will be used in the rest of the paper.  First, we will mention some important properties of $\Fpphi$ from \cite{SV} that should be familiar to the reader who has experience with the classical Fock spaces $\Fp$.

\begin{theorem}  \label{FockSpacePropThm} The Fock spaces $\Fpphi$ satisfy the following properties:
\begin{list}{}{}
\item $a) \   \text{There exists } \epsilon, C > 0$ independent of $z, w \in \C$ such that
\noindent  \begin{equation*} e^{- \phi(z) } |K(z, w)| e^{-\phi(w)} \leq C e^{-\epsilon |z - w|}. \end{equation*}  \\
\item $b) \  \text{If } 1 \leq p \leq \infty$ then $k_z \rightarrow 0$ weakly in $\Fpphi$ as $|z| \rightarrow \infty$.  \\
\item $c) \  \text{If } 1 \leq p < \infty$ then $(\Fpphi)^* = \Fqphi$ for $1/p + 1/q = 1$ under the usual pairing \begin{equation*} \Psi_g (f) := \int_{\C} f(z) \overline{g(z)}  e^{-2\phi(z)} \, dv(z). \end{equation*}  \\
\item $d) \  \text{The orthogonal projection } P: \Ltwophi \rightarrow \Ftwophi$ extends to a bounded operator from $\Lpphi$ to $\Fpphi$ when $1 \leq p \leq \infty$. \\
 \item $e)  \  P$ restricted to $\Fpphi$ is the identity operator when $1 \leq p \leq \infty$. \\
 \item $f) \ e^{\phi(z)} \approx \sqrt{K(z, z)}$ for any $z \in \C$. \\
\item $g) \ $If $ \ 0 < p < \infty$ and $r > 0$ then there exists $C_r > 0$ where \begin{equation*} (|f|^p e^{-p \phi})(z)   \lesssim C_r \int_{B(z, r)} |f(w)|^p e^{- p \phi(w)} \, dv(w) \end{equation*}  and \begin{equation*} | \nabla (|f|^p e^{-p \phi})| (z)   \lesssim C_r \int_{B(z, r)} |f(w)|^p e^{- p \phi(w)} \, dv(w) \end{equation*}  for any $f \in \Fpphi$ and $z \in \C$. \\
 \end{list}
\end{theorem}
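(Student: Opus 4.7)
My plan is to organize the derivation around the off-diagonal kernel estimate $(a)$, from which most other items follow by the standard reproducing-kernel-Hilbert-space machinery; the two genuinely new ingredients beyond that machinery are $(a)$ and the local pointwise bound $(g)$, and I would prove the latter first as an independent lemma. Writing $u = |f|^p e^{-p\phi}$ for $f$ entire, a short subharmonicity computation using $\partial \bar\partial |f|^p \geq 0$ together with the upper bound in $(\ref{PhiCond})$ yields $\Delta u \geq -c\, u$ for an absolute constant $c$; then $v(w) := e^{c|w-z|^2/(4n)} u(w)$ is genuinely subharmonic on $B(z,r)$, and the usual mean value inequality applied to $v$ gives the pointwise bound on $u(z)$ in terms of its average over $B(z,r)$. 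The gradient bound falls out by repeating the argument on the smaller ball $B(z,r/2)$ and combining with Cauchy estimates for the holomorphic part.

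The hard step is $(a)$. My plan follows the H\"ormander $L^2$-$\bar\partial$ approach: for fixed $z_0$, I would construct a holomorphic $g$ with $g(z_0) \approx e^{\phi(z_0)}$ and $\|g\|_{\Ftwophi} \lesssim 1$ by taking a smooth cutoff $\chi$ with $\chi(z_0)=1$ supported in a small ball around $z_0$, solving $\bar\partial u = \bar\partial \chi$ with the $e^{-2\phi}$ weight via H\"ormander's estimate (whose curvature hypothesis is precisely the lower bound in $(\ref{PhiCond})$), and setting $g := \chi - u$; the size $|u(z_0)| \ll 1$ is obtained by applying $(g)$ to the holomorphic $u$ on a small ball where $\chi \equiv 1$. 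This yields the lower bound $K(z_0,z_0) \gtrsim e^{2\phi(z_0)}$, and when combined with the trivial Cauchy--Schwarz upper bound $|K(z,w)| \leq K(z,z)^{1/2} K(w,w)^{1/2}$ immediately gives $(f)$. For the off-diagonal exponential decay, I would pass from the on-diagonal lower bound to a uniform decay statement by a standard localization/Agmon-type argument: the function $w \mapsto |K(z,w)|^2 e^{-2\phi(w)}$ is controlled pointwise by its $\Ftwophi$ norm in $w$, which is exactly $K(z,z)$, and iterating the local bound $(g)$ along a chain of balls between $z$ and $w$ together with subharmonicity of $\log|K(z,\cdot)|^2 - 2\phi(\cdot)$ off the diagonal produces exponential decay at a rate depending only on the ellipticity constants in $(\ref{PhiCond})$. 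The main obstacle will be keeping the constant $\epsilon > 0$ independent of the base point and independent of the size of $|z-w|$; this is where the uniform two-sided control in $(\ref{PhiCond})$ is used decisively.

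Granted $(a)$, $(f)$, and $(g)$, the remaining items are routine. For $(d)$, the Schur test with weight $h(z) \equiv 1$ and kernel $|K(z,w)| e^{-\phi(z)-\phi(w)}$ gives $\|P\|_{\Lpphi \to \Fpphi} \lesssim 1$ for every $1 \leq p \leq \infty$ because the exponential decay from $(a)$ makes both Schur integrals finite and uniform in the base point. Item $(e)$ is just the reproducing property on $\Ftwophi$ extended to $\Fpphi$ by density and by pointwise control from $(g)$. Item $(c)$ follows by the standard dual-pair argument: $\Psi_g$ is bounded by H\"older combined with $(d)$ applied to $\bar g$, injectivity is immediate by testing against reproducing kernels, and surjectivity uses Hahn--Banach to extend a functional from $\Fpphi$ to $\Lpphi$ and then applies $P$ to its $\Lqphi$ representative. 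Finally, for $(b)$, for any $g \in C_c(\C)$ one has
\begin{equation*}
|\Psi_g(k_z)| \lesssim \|g\|_\infty \int_{\supp g} |k_z(w)|\, e^{-\phi(w)}\, dv(w) \lesssim \|g\|_\infty \int_{\supp g} e^{-\epsilon |z-w|}\, dv(w)
\end{equation*}
by $(a)$ and $(f)$, which tends to zero as $|z| \to \infty$; combined with the uniform bound $\|k_z\|_{\Fpphi} \lesssim 1$ (from $(g)$ and $(f)$) and the density of $C_c(\C)$ in $\Fqphi$ via $P$, this upgrades to weak convergence in all of $\Fpphi$.
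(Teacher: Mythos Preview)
The paper does not prove this theorem at all: it is stated as a collection of known facts and attributed to \cite{SV} (Schuster--Varolin), with the remark that these properties ``should be familiar to the reader who has experience with the classical Fock spaces.'' So there is no proof in the paper to compare against; you are supplying an argument where the author simply cites the literature.

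Your overall architecture is the standard one in that literature (H\"ormander $\bar\partial$ for the on-diagonal lower bound and hence $(f)$; subharmonicity plus a correcting factor for the local estimate $(g)$; Schur test from $(a)$ for $(d)$; routine functional analysis for $(c)$, $(e)$, $(b)$). Those parts are fine as a sketch.

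The one place where your outline is genuinely thin is the off-diagonal decay in $(a)$. Saying that you will ``iterate the local bound $(g)$ along a chain of balls between $z$ and $w$ together with subharmonicity of $\log|K(z,\cdot)|^2 - 2\phi(\cdot)$'' does not by itself produce exponential decay: the local estimate $(g)$ only bounds a point value by a nearby average, and chaining such inequalities gives no decay whatsoever unless you already know the mass is small away from the diagonal. What the cited papers (Christ, Delin, Lindholm, and then \cite{SV}) actually do is run the H\"ormander $\bar\partial$ estimate a second time with an \emph{additional} weight of the form $e^{-\psi}$ where $\psi$ is a bounded-Hessian function comparable to $\epsilon|z - w|$; the strict positivity in the lower bound of $(\ref{PhiCond})$ is what allows you to absorb the curvature of $\psi$ and still close the estimate, and that is where the uniform $\epsilon$ comes from. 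Your plan has the right ingredients listed nearby (you mention ``Agmon-type'' and you correctly flag uniformity of $\epsilon$ as the obstacle), but the mechanism you describe is not the one that works; replace the chain-of-balls idea with a weighted $\bar\partial$ argument and the sketch becomes correct.
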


Note that property $a)$ immediately implies that $\{k_z : z \in \C\}$ is bounded in $\Fpphi$ when $0 < p \leq \infty$.  Furthermore, note that property $a)$ for the classical Fock space $\Ft$ is in fact true for any $\epsilon > 0$.  In particular, since \begin{equation*} K^\alpha  (z, w) = e^{\alpha (z \cdot \overline{w})} \end{equation*}where $K^\alpha  (z, w) $ is the reproducing kernel of $\Ft$,  we have that \begin{align*} e^{- \frac{\alpha}{2} |z|^2 } |K^\alpha  (z, w)| e^{- \frac{\alpha}{2} |w|^2 }  & = e^{- \frac{\alpha}{2} |z|^2 } |e^{ \alpha (z \cdot \overline{w})}|  e^{- \frac{\alpha}{2} |w|^2 } \\ & = e^{- \frac{\alpha}{2}  |z - w|^2}. \end{align*}  In general however, one can not expect to have such a fast off diagonal decay when dealing with generalized Fock spaces (though fortunately, as was noticed in \cite{SV}, quadratic exponential off diagonal decay as above is usually not needed.)

Now  if $\nu$ is a nonnegative Borel measure on $\C$, then we say $\nu$ is a Fock-Carleson measure for $\Fpphi$ if the embedding operator $\iota : \Fpphi \rightarrow \Lpphi(\nu)$  is bounded. We will often use the following useful characterization of Fock-Carleson measures on $\C$ (see \cite{SV} for a proof.)

\begin{theorem} \label{FockCarThm} If $1 \leq p < \infty$ and $\nu$ is a nonnegative Borel measure, then the following are equivalent:

\begin{list}{}{}
          \item $a) \ \nu$ is a Fock-Carleson measure for $\Fpphi$,
          \item $b) \ \sup_{z \in \C} \nu(B(z, 1))  < \infty$,
          \item $c) \ T_\nu$ is bounded on $\Fpphi$.
          \end{list}

Furthermore, the canonical norms defined by any of these three conditions are equivalent.
\end{theorem}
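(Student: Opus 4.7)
The plan is to establish the three equivalences in a cycle, $a) \Rightarrow b) \Rightarrow a) \Rightarrow c) \Rightarrow b)$, leveraging the reproducing kernel estimates from Theorem \ref{FockSpacePropThm}.

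First, for $a) \Rightarrow b)$, I would test the embedding inequality $\|f\|_{\Lpphi(\nu)} \lesssim \|f\|_{\Fpphi}$ on the normalized reproducing kernels $k_z$. Property $a)$ of Theorem \ref{FockSpacePropThm}, combined with $f)$ (so that $|k_z(w)|e^{-\phi(w)} \approx |K(z,w)|e^{-\phi(z)-\phi(w)} \lesssim e^{-\epsilon|z-w|}$), shows $\sup_z \|k_z\|_{\Fpphi} < \infty$. On the other hand, using $f)$ and the diagonal identity $|k_z(z)|e^{-\phi(z)} \approx K(z,z)^{1/2}e^{-\phi(z)} \approx 1$ together with the continuity/submean inequalities in $g)$, one extracts a uniform lower bound $|k_z(w)|e^{-\phi(w)} \gtrsim 1$ for $w \in B(z,1)$. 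Integrating against $d\nu$ over $B(z,1)$ yields $\nu(B(z,1)) \lesssim \|k_z\|_{\Lpphi(\nu)}^p \lesssim \|k_z\|_{\Fpphi}^p \lesssim 1$, uniformly in $z$.

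Next, for $b) \Rightarrow a)$, I would pick a covering $\{B(z_j,1)\}$ of $\C$ with uniformly bounded overlap, and write
\begin{equation*}
\int_{\C} |f(w)|^p e^{-p\phi(w)} \, d\nu(w) \leq \sum_j \int_{B(z_j,1)} |f(w)|^p e^{-p\phi(w)} \, d\nu(w).
\end{equation*}
Property $g)$ of Theorem \ref{FockSpacePropThm} applied on the ball $B(w,1) \subset B(z_j,2)$ gives the pointwise bound $|f(w)|^p e^{-p\phi(w)} \lesssim \int_{B(z_j,2)} |f|^p e^{-p\phi} \, dv$ uniformly for $w \in B(z_j,1)$. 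Pulling this out of the inner integral and using the hypothesis $\sup_j \nu(B(z_j,1)) < \infty$, summation with the bounded-overlap property recovers $\|f\|_{\Lpphi(\nu)}^p \lesssim \|f\|_{\Fpphi}^p$.

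For the Toeplitz equivalence, the key identity is that for $f \in \Fpphi$ and $g \in \Fqphi$ (with $1/p + 1/q = 1$), Fubini and the reproducing property $e)$ give
\begin{equation*}
\langle T_\nu f, g \rangle = \int_{\C} f(w) \overline{g(w)} e^{-2\phi(w)} \, d\nu(w).
\end{equation*}
Assuming $b)$, which by the previous step is equivalent to $\nu$ being Fock-Carleson for every $\Fpphi$ simultaneously (since $b)$ has no $p$-dependence), H\"older's inequality together with $a) \Leftrightarrow b)$ applied in both $\Fpphi$ and $\Fqphi$ bounds the right-hand side by $\|f\|_{\Fpphi}\|g\|_{\Fqphi}$ times a constant depending on $\sup_z \nu(B(z,1))$, so $T_\nu$ is bounded on $\Fpphi$ via duality $c)$ of Theorem \ref{FockSpacePropThm}. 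Conversely, if $c)$ holds, the same identity with $f = g = k_z$ yields $B(\nu)(z) = \langle T_\nu k_z, k_z \rangle$, and the lower bound $|k_z(w)|^2 e^{-2\phi(w)} \gtrsim 1$ for $w \in B(z,1)$ gives $\nu(B(z,1)) \lesssim \|T_\nu\|_{\Fpphi \to \Fpphi}$, recovering $b)$.

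The main obstacle is the uniform pointwise lower bound $|k_z(w)|e^{-\phi(w)} \gtrsim 1$ for $w$ in a fixed-radius ball around $z$, which is implicit in standard generalized Fock space theory but relies on the diagonal normalization $K(z,z) \approx e^{2\phi(z)}$ from $f)$ together with a gradient estimate from $g)$ to rule out rapid oscillation. Everything else is a matter of carefully combining the tools already listed in Theorem \ref{FockSpacePropThm}.
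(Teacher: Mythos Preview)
The paper does not actually prove this theorem: immediately before the statement it says ``see \cite{SV} for a proof'' and then moves on. So there is no in-paper argument to compare against; your proposal is being measured against a citation.

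That said, your sketch is the standard route and is essentially correct. The cycle $b)\Rightarrow a)$ via the submean-value inequality $g)$ and a bounded-overlap cover, and $a)\Rightarrow b)$ and $c)\Rightarrow b)$ by testing on normalized reproducing kernels, are exactly how this is done in the Schuster--Varolin reference. You correctly isolate the one nontrivial ingredient: the uniform near-diagonal lower bound $|k_z(w)|e^{-\phi(w)}\gtrsim 1$ for $w$ in a ball of fixed (small) radius about $z$. Your justification of this via $f)$ (giving the diagonal value $\approx 1$) and the gradient estimate in $g)$ (giving a uniform Lipschitz bound on $w\mapsto |k_z(w)|e^{-\phi(w)}$) is right; just note that the radius on which the lower bound holds may be some $r_0<1$, after which a finite covering recovers the radius-$1$ statement in $b)$.

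One small caution: your duality argument for $a)\Rightarrow c)$ is clean for $1<p<\infty$, but at $p=1$ you are implicitly using $(F_\phi^\infty)^*\supseteq F_\phi^1$, which is not among the listed properties. It is easier there to bound $T_\nu$ directly: from $|T_\nu f(z)|e^{-\phi(z)}\lesssim \int |f(w)|e^{-\phi(w)}e^{-\epsilon|z-w|}\,d\nu(w)$ and the Carleson property of $\nu$ one gets $\|T_\nu f\|_{F_\phi^1}\lesssim \|f\|_{F_\phi^1}$ by Fubini and a standard lattice-sum estimate. With that adjustment your outline is complete.
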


\noindent Since $\nu$ being Fock-Carleson for $\Fpphi$ is independent of $p$ when $1 \leq p < \infty$, we will say $\nu$ is a Fock-Carleson measure if $\nu$ satisfies any of the equivalent conditions in Theorem \ref{FockCarThm}. Furthermore, if $\mu$ is a complex Borel measure on $\C$, then we will denote by $\|\mu\|_*$ any of the canonical norms applied to the variation measure $|\mu|$ defined by the conditions in Theorem \ref{FockCarThm} . We will also let $\|f\|_*$ denote $\||f| \, dv\|_*$ when $f$ is a measurable function on $\C$.

We will now show that the spaces $\Fpphi$ behave in the same way that the spaces $\Fp$ do under complex interpolation (see \cite{Zhu}.)

\begin{theorem} \label{CompInterpTHM}  If $1 \leq p_0 \leq p_1 \leq \infty$ and $0 \leq \theta \leq 1$ where \begin{equation*} \frac{1}{p} = \frac{1 - \theta}{p_0} + \frac{\theta}{p_1} \end{equation*}  then \begin{equation*} {[F_{\phi} ^{p_0} , F_{\phi} ^{p_1}]}_{\theta} = \Fpphi \end{equation*} with equivalent norms. \end{theorem} \noindent

\begin{proof}
First note that the classical Stein-Weiss interpolation theorem gives us that \begin{equation} \label{CompInterpPROP} {[L_{\phi} ^{p_0} , L_{\phi} ^{p_1}]}_{\theta} = \Lpphi \end{equation}  with equal norms. Now by the definition of ${[L_{\phi} ^{p_0} , L_{\phi} ^{p_1}]}_{\theta}$ and (\ref{CompInterpPROP}), we have that  $ {[F_{\phi} ^{p_0} , F_{\phi} ^{p_1}]}_{\theta} \subseteq \Fpphi$.

On the other hand, if $f \in \Fpphi \subseteq \Lpphi$, then again by (\ref{CompInterpPROP}) there exists a positive constant $C$ and an analytic function $w \mapsto F(\cdot, w)$ from  $  \{w \in \mathbb{C} : 0 \leq \text{Re } w \leq 1\} $ to $  L_{\phi} ^{p_0} +  L_{\phi} ^{p_1}$ where \\

\begin{list}{}{}
\item $a) \  F(z, \theta) = f(z)$ for all $z \in \C$,
\item $b) \ \|F(\cdot, w) \|_{L_{\phi} ^{p_0}} \leq C$ for all $\text{Re } (w) = 0$,
\item $c) \ \|F(\cdot, w) \|_{L_{\phi} ^{p_1}} \leq C$ for all $\text{Re } (w) = 1$.
\end{list}

\noindent Now let $G(z, w) = (P (F(\cdot, w)))(z)$. Then by $a)$ and $d)$ in Theorem \ref{FockSpacePropThm} and Morera's theorem, we have that $w \mapsto G(\cdot, w)$ is an analytic function from  $ \{w \in \mathbb{C} : 0 \leq \text{Re } w \leq 1\} $ to $  F_{\phi} ^{p_0} +  F_{\phi} ^{p_1}$ and $G$ satisfies \\

\begin{list}{}{}
\item $a) \  G(z, \theta) = (Pf)(z) = f(z)$ for all $z \in \C$,
\item $b) \ \|G(\cdot, w) \|_{L_{\phi} ^{p_0}} \leq C'$ for all $\text{Re } (w) = 0$,
\item $c) \ \|G(\cdot, w) \|_{L_{\phi} ^{p_1}} \leq C'$ for all $\text{Re } (w) = 1$.
\end{list}

\noindent for some positive constant $C'$, which implies that $f \in {[F_{\phi} ^{p_0} , F_{\phi} ^{p_1}]}_{\theta}$, or ${[F_{\phi} ^{p_0} , F_{\phi} ^{p_1}]}_{\theta} = \Fpphi$.

To show the equivalence of norms, let $f \in \Fpphi$.  Then by definition (\ref{CompInterpPROP}) we have that

\begin{equation*} \|f\|_{\Fpphi} = \|f\|_{{[L_{\phi} ^{p_0} , L_{\phi} ^{p_1}]}_{\theta}} \leq \|f\|_{{[F_{\phi} ^{p_0} , F_{\phi} ^{p_1}]}_{\theta}}. \end{equation*} An application of the open mapping theorem immediately completes the proof.
\end{proof}

We next prove some simple results regarding $\SLphi$.  Note that the proof of the second one is similar to the proof of Proposition $3.2$ in \cite{XZ} though we include the details for the sake of the reader.

\begin{proposition} \label{SLphiBddProp} Any operator $A \in \SLphi$ extends bounded on $\Fpphi$ for any $1 \leq p < \infty$.  \end{proposition}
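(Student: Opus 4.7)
The plan is to realize $A$ as an integral operator with kernel $K_A(z,u) = (AK_u)(z)$ and then reduce boundedness on $\Fpphi$ to Young's inequality for convolution with an $L^1(\C)$ kernel.

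First I would fix a dense subspace of $\Fpphi$ that also sits inside $\Fqphi$, on which the integral representation
\begin{equation*} (Af)(z) = \int_{\C} f(u)\, (AK_u)(z)\, e^{-2\phi(u)} \, dv(u) \end{equation*}
is visibly valid. A convenient choice is $P(C_c(\C))$: by parts $d)$ and $e)$ of Theorem \ref{FockSpacePropThm} this lies inside $\Fpphi$ for every $1 \leq p \leq \infty$ and is dense in $\Fpphi$ for $1 \leq p < \infty$, since any $f \in \Fpphi$ equals $Pf$ and $fe^{-\phi}$ is approximated in $L^p$ by its truncations. The integral representation for such an $f$ then follows from the reproducing formula $f(u) = \int f(w) K(u, w) e^{-2\phi(w)}\, dv(w)$, applying $A$ (bounded on $\Fqphi$) and swapping with Fubini, with the required absolute convergence controlled by part $a)$ of Theorem \ref{FockSpacePropThm}.

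Next I would convert this into a pointwise kernel estimate using parts $a)$ and $f)$ of Theorem \ref{FockSpacePropThm} together with the $\SLphi$-condition (\ref{SLCond}). Writing $(AK_u)(z) = \sqrt{K(u,u)}(Ak_u)(z)$ and rewriting $(Ak_u)(z)$ via pairing with $K_z$, the normalization factors $\sqrt{K(u,u)}, \sqrt{K(z,z)}$ convert into $e^{\phi(u)}, e^{\phi(z)}$ (up to constants), yielding
\begin{equation*} |K_A(z,u)|\, e^{-\phi(z)} e^{-\phi(u)} \;\approx\; |\langle Ak_u, k_z\rangle| \;\lesssim\; \frac{1}{(1+|z-u|)^{2n+\delta}}. \end{equation*}
With this bound in hand the remainder is mechanical: pointwise
\begin{equation*} |(Af)(z)|\, e^{-\phi(z)} \;\lesssim\; \int_{\C} \bigl(|f| e^{-\phi}\bigr)(u) \; \frac{dv(u)}{(1+|z-u|)^{2n+\delta}}, \end{equation*}
so $(Af)e^{-\phi}$ is dominated pointwise by the convolution of $|f|e^{-\phi} \in L^p(\C, dv)$ with the radial kernel $(1+|\cdot|)^{-(2n+\delta)}$, which lies in $L^1(\C)$ because $\delta > 0$. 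Young's inequality (valid for every $1 \leq p \leq \infty$) immediately gives $\|Af\|_{\Fpphi} \lesssim \|f\|_{\Fpphi}$, and bounded extension to all of $\Fpphi$ follows by density.

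The main conceptual obstacle is the first step: making sense of the integral representation when $A$ is only assumed bounded on a single $\Fqphi$ with $q \in [2,\infty)$. Rather than fight with the initial domain of $A$, the cleanest route is probably to define a candidate operator $\tilde A$ on $\Fpphi$ by the right hand side of the integral formula, prove boundedness of $\tilde A$ on $\Fpphi$ directly via Young as above, and then verify $\tilde A = A$ on the dense subspace $P(C_c(\C)) \subseteq \Fpphi \cap \Fqphi$; once this identification is done, the Young step takes over and the proof is complete.
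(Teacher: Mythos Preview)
Your argument is correct. The core kernel estimate
\[
|(Af)(z)|\,e^{-\phi(z)} \;\lesssim\; \int_{\C} \bigl(|f|\,e^{-\phi}\bigr)(u)\,\frac{dv(u)}{(1+|z-u|)^{2n+\delta}}
\]
is exactly what the paper derives as well, via the identity $(Ag)(w)e^{-\phi(w)} \approx \langle g, A^*k_w\rangle$ and then expanding the pairing. The two proofs diverge only in how they extract boundedness on all $\Fpphi$ from this estimate. The paper applies Fubini to get the $F_\phi^1$ bound, then uses complex interpolation (Theorem~\ref{CompInterpTHM}) between $F_\phi^1$ and the given $F_\phi^q$ to cover $1\le p\le 2$, and finally bootstraps via duality: $A^*$ is then bounded on $[2,\infty)$, hence $A^*\in\SLphi$, hence $A^*$ is bounded on $[1,2]$, hence $A$ is bounded on $[2,\infty)$. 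Your route---Young's inequality once, for every $p$ simultaneously---is more direct and avoids both interpolation and the duality bootstrap entirely. You are also somewhat more careful than the paper about the domain issue (the paper simply invokes $F_\phi^1\subseteq F_\phi^q$); your device of defining a candidate $\tilde A$ by the integral and verifying agreement on $P(C_c(\C))$ is a clean way to handle this.
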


\begin{proof} Assume that $A$ is bounded on $\Fqphi$ for some $2 \leq q < \infty$ and that $A$ satisfies $(\ref{SLCond})$.
Let $g \in F_\phi ^1 \subseteq \Fqphi$ and note that an application of $(\ref{SLCond})$ and $f)$ in Theorem \ref{FockSpacePropThm} gives us that
\begin{align*} |(Ag)(w)| e^{-\phi(w)} & \approx |\langle g, A^* k_w \rangle| \\ & \leq
\incn |g(u)| |(A^* k_w)(u) | e^{-2\phi(u)} \, dv(u) \\ & \approx \incn \left(|g(u)| e^{-\phi(u)}\right) |\langle Ak_u, k_w \rangle | \, dv(u) \\ & \lesssim \incn \left(|g(u)| e^{-\phi(u)}\right) \frac{1}{(1 + |u - w|)^{2n + \delta}} \, dv(u). \end{align*}  An easy application of Fubini's theorem then gives us that $A$ extends to a bounded operator on $F_\phi ^1$, and furthermore since $2 \leq q < \infty$, Theorem \ref{CompInterpTHM} gives us that $A$ extends to a bounded operator on $\Fpphi$ for all $1 \leq p \leq 2$.

Finally, this means that $A^*$ is bounded on $\Fpphi$ for all $2 \leq p < \infty$. In particular, we have that $A^* \in \SLphi$ and so $A^*$ is bounded on $\Fpphi$ for all $1 \leq p \leq 2$, which implies that $A$ is bounded on $\Fpphi$ for all $1 \leq p < \infty.$ \end{proof}

\begin{proposition} \label{AlgProp} $\SLphi$ is a $*-algebra$.  \end{proposition}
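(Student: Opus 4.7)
The plan is to verify the four properties that define a $*$-algebra: closure under addition, scalar multiplication, composition, and adjoints. Addition and scalar multiplication are immediate: if $A, B \in \SLphi$ satisfy (\ref{SLCond}) with exponents $\delta_A, \delta_B$ and are bounded on $\Fqphi[q_A], \Fqphi[q_B]$ with $q_A, q_B \geq 2$, then $A + B$ (and $\lambda A$ for $\lambda \in \mathbb{C}$) is bounded on $F_\phi^{\max(q_A,q_B)}$ by Proposition \ref{SLphiBddProp}, and satisfies (\ref{SLCond}) with exponent $\min(\delta_A,\delta_B)$.

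For the adjoint, if $A \in \SLphi$ then by Proposition \ref{SLphiBddProp}, $A$ is bounded on every $\Fpphi$ with $1 \leq p < \infty$. Dualizing via $c)$ of Theorem \ref{FockSpacePropThm}, $A^*$ is therefore bounded on every $\Fqphi$ with $1 < q \leq \infty$, and in particular on some $\Fqphi$ with $2 \leq q < \infty$. The localization estimate follows from the identity
\begin{equation*}
|\langle A^* k_z, k_w\rangle| = |\overline{\langle A k_w, k_z\rangle}| = |\langle A k_w, k_z\rangle| \lesssim \frac{1}{(1+|w-z|)^{2n+\delta(A)}},
\end{equation*}
so $A^* \in \SLphi$ with $\delta(A^*) = \delta(A)$.

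The main step is closure under composition. Let $A, B \in \SLphi$. Using property $f)$ of Theorem \ref{FockSpacePropThm}, we have $(Bk_z)(u) = \sqrt{K(u,u)}\langle Bk_z, k_u\rangle$ for holomorphic evaluation, and $K(u,u)e^{-2\phi(u)} \approx 1$. Thus, applying the defining inner product and Fubini (whose justification will rely on the fast kernel decay from $a)$ of Theorem \ref{FockSpacePropThm} together with the $\Ftwophi$ reproducing property, so that $Bk_z$ admits the representation $Bk_z = \int \sqrt{K(u,u)} \langle Bk_z, k_u\rangle K(\cdot, u)e^{-2\phi(u)}\,dv(u)$),
\begin{equation*}
\langle AB k_z, k_w\rangle = \langle Bk_z, A^* k_w\rangle \approx \int_\C \langle Bk_z, k_u\rangle \, \overline{\langle A^* k_w, k_u\rangle}\, dv(u),
\end{equation*}
which by the localization estimates for $A$ and $B$ gives
\begin{equation*}
|\langle AB k_z, k_w\rangle| \lesssim \int_\C \frac{1}{(1+|z-u|)^{2n+\delta_B}} \cdot \frac{1}{(1+|u-w|)^{2n+\delta_A}}\, dv(u).
\end{equation*}

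To conclude, I would invoke the standard convolution-type inequality
\begin{equation*}
\int_{\C} \frac{dv(u)}{(1+|z-u|)^{2n+\alpha}(1+|u-w|)^{2n+\beta}} \lesssim \frac{1}{(1+|z-w|)^{2n+\min(\alpha,\beta)}}
\end{equation*}
valid for $\alpha, \beta > 0$, proved by splitting the region of integration according to whether $|u-z| \leq |z-w|/2$ (so $|u-w| \geq |z-w|/2$) or the reverse. This yields $|\langle ABk_z, k_w\rangle| \lesssim (1+|z-w|)^{-2n-\min(\delta_A,\delta_B)}$, and since $AB$ is bounded on any $\Fqphi$ ($2\leq q < \infty$) on which both factors are bounded, $AB \in \SLphi$. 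The chief technical obstacle is the Fubini justification for the kernel representation of $Bk_z$ and the resulting composition identity, which is routine given the exponential off-diagonal decay in Theorem \ref{FockSpacePropThm}$(a)$; everything else is an elementary kernel estimate.
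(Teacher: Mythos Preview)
Your proof is correct and follows essentially the same approach as the paper: the adjoint case reduces to the symmetry $|\langle A^* k_z, k_w\rangle| = |\langle A k_w, k_z\rangle|$ together with boundedness from Proposition~\ref{SLphiBddProp}, and the composition case is handled by writing $\langle A_1 A_2 k_z, k_w\rangle$ as an integral over $u$ of $\langle A_2 k_z, k_u\rangle \langle A_1 k_u, k_w\rangle$ (up to harmless factors from $f)$ of Theorem~\ref{FockSpacePropThm}) and then invoking the convolution-type bound with resulting exponent $\min(\delta_1,\delta_2)$. The paper is terser---it omits the explicit verification of linearity and defers the convolution estimate to \cite{XZ}---but the argument is the same.
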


\begin{proof} As was remarked in the proof of Proposition \ref{SLphiBddProp}, $A \in \SLphi \Longrightarrow A^* \in \SLphi$.  Thus, we only need to show that $\SLphi$ is an algebra.

Let $A_1, A_2 \in \SLphi$ and pick $\delta_i> 0 $ where \begin{equation*} |\langle
 A_i k_z, k_w \rangle| \lesssim \frac{1}{(1 + |z - w|)^{2n + \delta_i}} \end{equation*} for $i = 1, 2$.   Then by Theorem \ref{FockSpacePropThm} we have \begin{align*}
 |\langle A_1 A_2 k_z, k_w \rangle| & \leq  \int_{\C} |(A_2 k_z) (u)| |(A_1 ^* k_w ) (u)| e^{-2\phi(u)} \, du \\ & \lesssim
 \int_{\C} |\langle A_2 k_z, k_u \rangle| |\langle A_1 k_u, k_w \rangle| \, du \\ & \lesssim \int_{\C} \frac{1}{(1 + |z - u|)^{2n + \delta_2} (1 + |u - w|)^{2n + \delta_1}}  \, du \end{align*} The proof now easily follows with $\delta(A_1 A_2) = \min\{\delta_1, \delta_2\}$ (see  p. 10 in \cite{XZ} for more details.)    \end{proof}

\begin{proposition} \label{ToepOpInSL} If $\mu$ is a complex Borel measure on $\C$ such that $|\mu|$ is Fock-Carleson then $T_\mu \in \SLphi$ for any $1 < p < \infty$.  \end{proposition}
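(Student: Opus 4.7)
The plan is to verify the two defining conditions of $\SLphi$ in turn: boundedness on some $\Fqphi$ with $q \ge 2$, and the off-diagonal decay estimate (\ref{SLCond}).

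For boundedness, I would write $\mu = (\mu_1 - \mu_2) + i(\mu_3 - \mu_4)$ with positive Borel measures $\mu_j$ each dominated by $|\mu|$. Since the Fock-Carleson condition $\sup_z \nu(B(z,1)) < \infty$ is monotone in $\nu$, each $\mu_j$ inherits the property, and Theorem \ref{FockCarThm} then shows that each $T_{\mu_j}$, and consequently $T_\mu$, is bounded on every $\Fpphi$ with $1 \le p < \infty$. In particular $T_\mu$ is bounded on $\Ftwophi$, which supplies the $q = 2$ half of the definition of $\SLphi$.

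To obtain the decay estimate, I would first derive a clean integral representation for $\langle T_\mu k_z, k_w \rangle$. Starting from the definition of $T_\mu$, applying Fubini to swap the $d\mu$ and $dv$ integrations, and then using the reproducing property of $K(z,w)$ on the inner $dv$-integral (together with $\overline{K(y,u)} = K(u,y)$) yields
$$\langle T_\mu k_z, k_w \rangle = \int_{\C} k_z(u)\,\overline{k_w(u)}\, e^{-2\phi(u)}\, d\mu(u).$$
Parts (a) and (f) of Theorem \ref{FockSpacePropThm} combine to give the pointwise bound $|k_z(u)|\, e^{-\phi(u)} \lesssim e^{-\epsilon|u-z|}$, and analogously for $k_w$; hence the integrand in modulus is controlled by $e^{-\epsilon|u-z|}\, e^{-\epsilon|u-w|}$.

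The main technical step, which I expect to be routine bookkeeping rather than a genuine obstacle, is showing
$$\int_{\C} e^{-\epsilon|u-z|}\, e^{-\epsilon|u-w|}\, d|\mu|(u) \;\lesssim\; \|\mu\|_* \, e^{-(\epsilon/4)|z-w|}.$$
To do this I would split $\C$ into the two regions $\{|u-z| \ge |z-w|/2\}$ and $\{|u-w| \ge |z-w|/2\}$, extract a factor $e^{-\epsilon|z-w|/4}$ from whichever exponential is ``large'' on each region, and then control the remaining integrals of $e^{-(\epsilon/2)|u-z|}$ (respectively $e^{-(\epsilon/2)|u-w|}$) against $d|\mu|$ by decomposing $\C$ into annuli of width one centered at $z$ (respectively $w$), covering each annulus by $O(k^{2n-1})$ unit balls, and invoking the uniform estimate $|\mu|(B(\cdot,1)) \lesssim \|\mu\|_*$ to sum a convergent geometric-type series. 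Since exponential decay in $|z-w|$ dominates any polynomial decay, (\ref{SLCond}) follows for every $\delta > 0$, completing the verification that $T_\mu \in \SLphi$.
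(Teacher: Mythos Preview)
Your proposal is correct and follows the same overall architecture as the paper: boundedness from Theorem \ref{FockCarThm}, the integral representation of $\langle T_\mu k_z, k_w\rangle$ via Fubini and the reproducing property, and then the pointwise kernel bound from parts $(a)$ and $(f)$ of Theorem \ref{FockSpacePropThm}.

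The one genuine difference is in how the resulting $d|\mu|$-integral is controlled. You work directly with $d|\mu|$: split $\C$ according to which of $|u-z|$, $|u-w|$ is at least $|z-w|/2$, peel off the exponential factor in $|z-w|$, and then bound $\int e^{-(\epsilon/2)|u-\cdot|}\, d|\mu|(u)$ by an annular decomposition together with the unit-ball bound $|\mu|(B(\cdot,1)) \lesssim \|\mu\|_*$. The paper instead invokes the Carleson embedding once more: since $k_z(\cdot)\,\overline{k_w(\cdot)} \in F^1_{2\phi}$ and $|\mu|$ is Fock-Carleson (the condition $\sup_z |\mu|(B(z,1)) < \infty$ being weight-independent), one gets
\[
\int_{\C} |k_z(u)|\,|k_w(u)|\,e^{-2\phi(u)}\, d|\mu|(u) \;\lesssim\; \|\mu\|_* \int_{\C} |k_z(u)|\,|k_w(u)|\,e^{-2\phi(u)}\, dv(u),
\]
after which the Lebesgue integral $\int e^{-\epsilon(|z-u|+|u-w|)}\, dv(u) \lesssim e^{-(\epsilon/2)|z-w|}$ is immediate. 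The paper's route is shorter and avoids the region-splitting; yours is more self-contained in that it uses only the raw definition of Fock-Carleson rather than the embedding theorem. Either way the conclusion (exponential decay, hence (\ref{SLCond}) for every $\delta > 0$) is the same.
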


\begin{proof}  First note that Theorem \ref{FockCarThm} gives us that $T_\mu$ is bounded on $\Fpphi$ for all $1 \leq p < \infty$.  Now note that Fubini's theorem and Theorems \ref{FockSpacePropThm} and \ref{FockCarThm} tell us that \begin{equation*} \langle T_\mu k_z, k_w \rangle = \int_{\C} k_z(u) \overline{k_w (u)}  e^{-2\phi(u)} \, d\mu(u). \end{equation*}  Furthermore, another easy application of Theorems \ref{FockSpacePropThm} and \ref{FockCarThm} and the fact that $k_z (\cdot) k_w (\cdot) \in F^1 _{2\phi}$  for each $z, w \in \C$ gives us that \begin{align*}
 |\langle T_\mu k_z, k_w \rangle | & \leq  \int_{\C} |k_z(u)| |k_w (u)|  e^{-2\phi(u)} \, d|\mu| (u)  \\ &  \lesssim \|\mu\|_* \int_{\C} |k_z(u)| |k_w (u)|  e^{-2\phi(u)} \, dv(u) \\ & \lesssim \|\mu\|_* \int_{\C} e^{-\epsilon (|z - u| + |u - w|)} \, dv(u) \\ & \lesssim \|\mu\|_* e^{-\frac{ \epsilon}{2} |z - w|}. \end{align*}
 \end{proof}

Finally in this section we prove Proposition \ref{BerProp}.

 \noindent \textit{Proof of Proposition \ref{BerProp}}. If $R > 0$ then obviously we have \begin{equation*}
 |(B(A))(z)| = |\langle Ak_z, k_z\rangle| \leq \sup_{w \in B(z, R)} |\langle Ak_z, k_w\rangle| \end{equation*} so that $B(A)$ vanishes at infinity if (\ref{CompOpSuffCond}) is true.

Now assume the existence of a uniformly bounded family of operators on both $\Fpphi$ and $\Fqphi$ satisfying (\ref{UzDef}). Furthermore, assume that $B(A)$ vanishes at infinity but that \begin{equation*} \limsup_{|w| \rightarrow \infty} \sup_{w \in B(z, R)} |\langle Ak_z, k_w\rangle| \neq 0 \end{equation*} for some fixed $R > 0$.  Thus, there exists sequences $\{z_m\}, \{w_m\} $  where $\lim_{m \rightarrow \infty} |z_m| = +\infty$ and $|w_m| \leq R$ for any $m \in \N$, and where \begin{equation} \label{BerPropAssump} \limsup_{m \rightarrow \infty} |\langle Ak_{z_m}, k_{z_m - w_m}\rangle| > \epsilon\end{equation} for some $\epsilon > 0$.  Furthermore, passing to a subsequence if necessary, we may assume that $\lim_{m \rightarrow \infty} w_m = w$. Note that an easy application of Theorem \ref{FockSpacePropThm} and the Lebesgue dominated convergence theorem gives us that $\lim_{m \rightarrow \infty} k_{w_m} = k_w $ in where the convergence is in the $\Fpphi$ norm.

Let $\mathcal{B}(\Fpphi)$ be the space of bounded operators on $\Fpphi$.  Now since $(\Fpphi)^* = \Fqphi$, an argument that is almost identical to the proof of the Banach-Alaoglu theorem tells us that the unit ball of $\mathcal{B}(\Fpphi)$ is $\WOT$ compact.  Then passing to another subsequence if necessary, we can assume \begin{equation*} \widehat{A} = \WOT -  \lim_{m \rightarrow \infty} U_{z_m} ^* A U_{z_m}. \end{equation*} Thus, we have that \begin{align*} \limsup_{m \rightarrow \infty} |\langle Ak_{z_m}, k_{z_m - w_m}\rangle| & \approx
\limsup_{m \rightarrow \infty} |\langle U_{z_m} ^*A U_{z_m} k_{0},  k_{ w_m}\rangle| \\ & = \limsup_{m \rightarrow \infty} |\langle U_{z_m} ^*A U_{z_m} k_{0},  k_{ w}\rangle| \\ & = |\langle\widehat{A} k_0, k_w \rangle| .\end{align*}    However, for any $z \in \C$ \begin{equation*} |\langle \widehat{A} k_z, k_z \rangle| = \lim_{m \rightarrow \infty}  |\langle  U_{z_m} ^* A U_{z_m} k_z, k_z \rangle| \approx \lim_{m \rightarrow \infty} |\langle A k_{z_m - z}, k_{z_m - z} \rangle| = 0 \end{equation*} since by assumption $B(A)$ vanishes at infinity. Thus, since the Berezin transform is injective (see the end of Section \ref{SectionProofs2}), we get that $\widehat{A} = 0$, which contradicts (\ref{BerPropAssump}) and completes the proof.

\section{Proof of Theorem \ref{CompOpSuff} for $p = 2$} \label{SectionProofs1}
In this section we will prove Theorem \ref{CompOpSuff} when $p = 2$.  Now if $f \in \Ftwophi$, then note that Fubini's theorem and Theorem \ref{FockSpacePropThm} gives us that \begin{align*} f(w) & = \int_{\C} f(z) K(w, z) \, e^{-2\phi(z)} \, dv(z) \\ & = \int_{\C} f(z) \langle K(\cdot, z) , K(\cdot, w) \rangle  \, e^{-2\phi(z)} \, dv(z) \\ & = \incn ((\Tk_z \otimes \Tk_z) f)(w)  \, dv(z)  \end{align*} where \begin{equation*} \Tk_z = e^{-\phi(z)} K(\cdot, z). \end{equation*}

In other words, we have that \begin{equation} \label{ResOfId} \text{Id}_{\Ftwophi \rightarrow \Ftwophi} = \incn \Tk _z \otimes \Tk _z \, dv(z) \end{equation} where the integral is interpreted as a standard B\^{o}chner integral, which roughly states that we can treat $\{\Tk_z\}_{z \in \C}$ as a sort of continuously indexed frame.  Furthermore, note that $\int_K \Tk _z \otimes \Tk _z \, dv(z)$ is compact (Hilbert-Schmidt in fact) on $\Ftwophi$ for any compact $K \subseteq \C$.

We will now very briefly sketch the main idea of the proof of Theorem \ref{CompOpSuff} when $p = 2$. First, with the help of some simple ideas from classical frame theory, we will rewrite (\ref{ResOfId}) in a kind of discretized way that is more convenient for us.  We will then combine this with the fact that operators in $\SLphi$ are ``almost diagonal" with respect to $\{\Tk_z \}_{z \in \C}$ to prove that $\|A\|_\Q$ can be dominated by the norm of a certain block diagonal matrix involving the family $\{A\Tk_z\}_{z \in \C}$ if $A$ is in the $\Ftwophi$ operator norm closure of $\SLphi$.  Finally, we will complete the proof of Theorem \ref{CompOpSuff} when $p = 2$ by showing that condition \ref{CompOpSuffCond} easily implies that the norm of these blocks approaches zero as one goes farther down the diagonal.  As was mentioned in the introduction, this same idea was used to prove Theorem \ref{CompOpSuff}  in the $\Ft$ setting.  Despite this, it is again worth noting that the details of the arguments in this section are considerably simpler and more transparent than the details in \cite{XZ}.

Now treat $\Z$ as a lattice in $\C$ in the canonical way and let $\{e_u\}_{u \in \Z}$ be any fixed orthonormal basis for $\Ftwophi$.    Note that by f) in Lemma \ref{FockSpacePropThm} we have that $|\Tk_z(w)| \approx |k_z(w)|$ for any $z, w \in \C$.

The proof of Theorem \ref{CompOpSuff} when $p = 2$ will require the following three lemmas, the first of which is well known (though we include the proof for the sake of completion), and the third of which contains the essential ideas of the proof. Note that in this section, all norms will either be the $\Ftwophi$ norm, or the operator norm on $\Ftwophi$.

\begin{lemma} \label{XiaLem1} If $F_z : = \sum_{u \in \Z} \Tk_{u + z} \otimes e_u$ is the translated ``pre-frame operator" asociated to $\{\Tk_{u + z}\}_{u \in \C}$ for $z \in \C$, then $\sup_{z \in \C} \|F_z \| < \infty.$
\end{lemma}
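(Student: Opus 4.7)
The plan is to pass to the adjoint and reduce to a pointwise sampling estimate on $\Ftwophi$. Recall that $F_z \colon \Ftwophi \to \Ftwophi$ acts on $f$ by $F_z f = \sum_{u \in \Z} \langle f, e_u\rangle \Tk_{u+z}$, so its adjoint is $F_z^* g = \sum_{u \in \Z} \langle g, \Tk_{u+z}\rangle e_u$. Since $\{e_u\}_{u \in \Z}$ is orthonormal and $\Tk_w = e^{-\phi(w)} K(\cdot, w)$, the reproducing property gives
\begin{equation*}
\|F_z^* g\|^2 = \sum_{u \in \Z} |\langle g, \Tk_{u+z}\rangle|^2 = \sum_{u \in \Z} |g(u+z)|^2 e^{-2\phi(u+z)}.
\end{equation*}

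Next I would invoke the local pointwise-to-integral estimate from part $g)$ of Theorem \ref{FockSpacePropThm} with $p=2$: for any fixed $r > 0$ and every $w \in \C$,
\begin{equation*}
|g(w)|^2 e^{-2\phi(w)} \lesssim \int_{B(w,r)} |g(w')|^2 e^{-2\phi(w')} \, dv(w').
\end{equation*}
Choose $r < 1/2$ so that the balls $\{B(u+z, r)\}_{u \in \Z}$ are pairwise disjoint for every $z \in \C$ (this is possible because $\Z$ sits as the standard integer lattice in $\mathbb{R}^{2n} \cong \C$, and translation by $z$ preserves the minimum separation). Summing over $u$ and using disjointness then yields
\begin{equation*}
\sum_{u \in \Z} |g(u+z)|^2 e^{-2\phi(u+z)} \lesssim \sum_{u \in \Z} \int_{B(u+z,r)} |g(w')|^2 e^{-2\phi(w')} \, dv(w') \leq \|g\|^2,
\end{equation*}
with implied constant independent of $z$.

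Combining the two displays gives $\|F_z^* g\| \lesssim \|g\|$ uniformly in $z$, hence $\sup_{z \in \C} \|F_z\| = \sup_{z \in \C} \|F_z^*\| < \infty$. There is no real obstacle here: the only point requiring a small amount of care is choosing $r$ small enough to guarantee disjointness of the shifted lattice balls so that the sampling inequality can be summed without a multiplicity constant depending on $z$.
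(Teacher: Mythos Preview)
Your proof is correct and is essentially identical to the paper's: both compute $\|F_z^* g\|^2 = \sum_{u \in \Z} |g(u+z)|^2 e^{-2\phi(u+z)}$ via the reproducing property, apply the pointwise estimate from part $g)$ of Theorem \ref{FockSpacePropThm}, and sum over disjoint lattice balls. The only cosmetic difference is that the paper writes this as $\langle F_z F_z^* f, f\rangle$ and takes $r = 1/2$ rather than $r < 1/2$.
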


\begin{proof} An easy computation gives us that \begin{equation*} F_z F_z^*  = \sum_{u \in \Z} \Tk_{u + z} \otimes \Tk_{u + z}. \end{equation*} Thus, f) and g) in Lemma \ref{FockSpacePropThm} gives us that \begin{align*} \langle F_z F_z^* f, f \rangle& = \sum_{u \in \Z} |\langle f, \Tk_{u + z} \rangle|^2 \\ & = \sum_{u \in \Z} |f(u + z)|^2 e^{-2\phi(u + z)} \\ & \lesssim \sum_{u \in \Z} \int_{B(u + z, \frac{1}{2})} |f(w)|^2 e^{-2\phi(w)} \, dv(w) \\ & \leq \|f\|^2 \end{align*} if $f \in \Ftwophi$.   \end{proof}

\begin{lemma} \label{XiaLem2} Suppose that $B \in \SLphi$ and let $\epsilon > 0$.  Then there exists $R = R(B, \epsilon)$ where if $\Omega \subset \Z \times \Z$ satisfies $|u - v| \geq R$ for any $(u, v) \in \Omega$ and if $\eta, \xi \in S := [0, 1)^{2n} \subset \C$,  then \begin{equation*}
\left\| \sum_{(u, v) \in \Omega} \langle B\Tk_{v + \eta}, \Tk_{u + \xi} \rangle \ e_u \otimes e_v \right\| \leq \epsilon \end{equation*} \end{lemma}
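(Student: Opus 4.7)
The plan is to realize the operator $M := \sum_{(u,v) \in \Omega} \langle B\Tk_{v+\eta}, \Tk_{u+\xi}\rangle\, e_u \otimes e_v$ on $\Ftwophi$ as a matrix on $\ell^2(\Z)$ via the orthonormal basis $\{e_u\}$, and then apply a straightforward Schur test. Since $\{e_u\}$ is orthonormal, the operator norm of $M$ equals the $\ell^2(\Z) \to \ell^2(\Z)$ norm of the matrix $(c_{u,v})_{u,v \in \Z}$ where $c_{u,v} := \langle B\Tk_{v+\eta}, \Tk_{u+\xi}\rangle\,\mathbf{1}_{\Omega}(u,v)$.

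The first step is to translate the localization hypothesis on $B$ into a pointwise decay bound on $c_{u,v}$. By $f)$ of Theorem \ref{FockSpacePropThm}, $\sqrt{K(z,z)} \approx e^{\phi(z)}$, so $\Tk_z = e^{-\phi(z)} K(\cdot, z)$ differs from the normalized reproducing kernel $k_z$ only by a uniformly bounded scalar factor. Since $B \in \SLphi$, there is a $\delta = \delta(B) > 0$ such that
\begin{equation*}
|c_{u,v}| \;\lesssim\; \frac{1}{\bigl(1 + |(u+\xi)-(v+\eta)|\bigr)^{2n+\delta}} \;\lesssim\; \frac{1}{(1 + |u-v|)^{2n+\delta}},
\end{equation*}
where the implicit constants depend only on $B$ (the passage from $u+\xi-v-\eta$ to $u-v$ costs only a factor depending on $|\xi-\eta| \le \sqrt{2n}$, which is harmless and independent of $\xi, \eta \in S$).

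Next, I apply the Schur test to $(c_{u,v})$. The hypothesis on $\Omega$ forces $c_{u,v} = 0$ whenever $|u-v| < R$, so for every $u \in \Z$,
\begin{equation*}
\sum_{v \in \Z} |c_{u,v}| \;\le\; C(B) \sum_{\substack{w \in \Z \\ |w| \ge R}} \frac{1}{(1+|w|)^{2n+\delta}} \;=:\; \sigma(R),
\end{equation*}
and the same bound holds for $\sup_v \sum_u |c_{u,v}|$ by symmetry. Since the series defining $\sigma(R)$ converges and $\sigma(R) \to 0$ as $R \to \infty$ (at rate $O(R^{-\delta})$), the Schur test gives $\|M\| \le \sigma(R)$. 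Choosing $R = R(B,\epsilon)$ large enough that $\sigma(R) \le \epsilon$ completes the proof.

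There is no substantial obstacle here; the key conceptual point is the reduction to a matrix on $\ell^2(\Z)$ indexed by the lattice, after which everything is driven by the super-$(2n)$ off-diagonal decay built into the definition of $\SLphi$. The only minor care needed is in verifying that the uniform shift parameters $\eta, \xi \in [0,1)^{2n}$ do not spoil the decay estimate, which follows from the crude bound $|\xi - \eta| \le \sqrt{2n}$.
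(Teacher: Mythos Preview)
Your proof is correct and follows essentially the same approach as the paper: both reduce to a matrix on $\ell^2(\Z)$, invoke the $\SLphi$ decay to bound $|c_{u,v}| \lesssim (1+|u-v|)^{-(2n+\delta)}$, and apply the Schur test to the tail where $|u-v|\ge R$. The only cosmetic difference is that the paper splits the exponent as $(2n+\delta)=(2n+\delta/2)+\delta/2$ and runs an explicit dyadic shell count to get the bound $(1+R)^{-\delta/2}$, whereas you simply observe that the tail of the convergent lattice sum $\sum_{|w|\ge R}(1+|w|)^{-(2n+\delta)}$ is $O(R^{-\delta})$; your version is slightly more direct but amounts to the same thing.
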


\begin{proof} Without loss of generality assume that $R \geq 1$ so that $(u, v) \in \Omega$ implies that $|u - v| \geq 1$.  Since $|u - v| \geq R$ for any $(u, v) \in \Omega$,  we immediately obtain that \begin{equation*} \left|\langle B\Tk_{v + \eta}, \Tk_{u + \xi} \rangle \right| \lesssim \frac{1}{(1 + R^{\frac{\delta}{2}}) |u - v|^{2n + \frac{\delta}{2}}} \end{equation*} for any $\eta, \xi \in S$.

Now let $p_i : \Omega \rightarrow \Z$ for $i = 1, 2$ be the projection onto the i${}^\text{th}$ factor.  Furthermore, for each $u \in p_1 (\Omega)$ and each integer $\ell \geq 0$, let \begin{equation*} G_\ell ^u := \{v : (u, v) \in \Omega  \text{ and }  2^\ell \leq |u - v| < 2^{\ell + 1} \}. \end{equation*}

By an elementary volume count, we have that \begin{equation*} \text{card } G_\ell ^u \lesssim 2^{2n\ell}. \end{equation*}
Thus, for any $u \in p_1 (\Omega)$ we have \begin{align*} \sum_{v :  (u, v) \in \Omega} \left|\langle B\Tk_{v + \eta}, \Tk_{u + \xi} \rangle \right| & \lesssim \frac{1}{(1 + R)^{\frac{\delta}{2}}}  \sum_{v :  (u, v) \in \Omega} \frac{1}{(1 + |u - v|)^{2n + \frac{\delta}{2}}} \\ & = \frac{1}{(1 + R)^{\frac{\delta}{2}}} \sum_{\ell = 0}^\infty \sum_{v \in G_\ell ^u} \frac{1}{(1 + |u - v|)^{2n + \frac{\delta}{2}}} \\ & \lesssim \frac{1}{(1 + R)^{\frac{\delta}{2}}} \sum_{\ell = 0}^\infty \frac{2^{2n\ell}}{2^{(2n + \frac{\delta}{2})\ell}}  \\ & \lesssim \frac{1}{(1 + R)^{\frac{\delta}{2}}}. \end{align*} Similarly, since $B^* \in \SLphi$, we have for each $v \in p_2(\Omega)$ that \begin{equation*} \sum_{u : (u, v) \in \Omega} \left|\langle B^*\Tk_{v + \eta}, \Tk_{u + \xi} \rangle \right| \lesssim \frac{1}{(1 + R)^\frac{\delta}{2}}. \end{equation*}

\noindent Therefore, an easy application of the Schur test now completes the proof.

\end{proof}

For the next lemma, it will be convenient to use the standard ``sup-norm" $|\cdot|_{\infty} $ on $\C$ defined for $z = (z_1, \ldots, z_n)$ by \begin{equation*} \INF{z} := \max\{|z_1|, \ldots, |z_n|\} \end{equation*} and we will let $B_{\infty}(z, R)$ denote the open ball in $\C$ with center $z \in \C$ and radius $R > 0$ under this norm.  Furthermore, for any $R > 0$ let \begin{equation*} R \Z := \{Ru : u \in \Z\} \end{equation*} and     let \begin{equation*} \Z_R := \{u \in \Z : |u|_\infty < R \} \end{equation*} Also, for $z \in \C$ and $R \in \N$ let $F_{z;R}$ denote the translated and truncated ``pre-frame operator" defined by \begin{equation*} F_{z;R} := \sum_{u \in \Z_R } \Tk_{u + z} \otimes e_u. \end{equation*}  Note that if $A$ is bounded on $\Ftwophi$ and $a, b \in \C$, then by definition we have that \begin{equation*} F_{a;R} ^* A F_{ b ; R} = \sum_{x, y \in \Z_R} \langle A \Tk_{y + b} , \Tk_{x + a} \rangle e_x \otimes e_y. \end{equation*}

\begin{lemma} \label{XiaLem3} For any $A$ in the $\Ftwophi$ operator norm closure of $\SLphi$, there exists some $R \in \N$ depending on $A$ where the following holds for any $N \in \N$: there exists $a, b \in \C$ with \begin{equation*} \INF{a} \geq N - 1 \  \ \text{ and } \ \ \INF{b} \leq 2  \end{equation*} such that \begin{equation*} \|A\|_{\Q} \lesssim \|F_{a;R}^* A F_{a + b ; R}\|. \end{equation*}
\end{lemma}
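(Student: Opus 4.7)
The plan is to use the resolution of the identity $\mathrm{Id} = \int_S F_z F_z^* \, dz$ (where $S = [0, 1)^{2n}$), combined with a block decomposition of $\Z$, to reduce the essential norm of $A$ to the supremum, at infinity, of norms of diagonal ``frame blocks'' $(\tilde F_z^{(k)})^* A \tilde F_w^{(k)}$. Since $\|A\|_\Q = 0$ is trivial, I assume $\|A\|_\Q > 0$, and I first approximate $A$ in operator norm by an element of $\SLphi$ so that Lemma \ref{XiaLem2} applies, the approximation error being absorbed into the $\lesssim$.

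Fix an integer $R$ (to be chosen depending on $A$) and partition $\Z = \bigsqcup_{k \in \Lambda} (k + \Z_R)$ with $\Lambda := (2R - 1)\Z$. Let $P_k$ denote the orthogonal projection of $\ell^2(\Z)$ onto $\ell^2(k + \Z_R)$, set $\tilde F_z^{(k)} := F_z P_k$, and for each $N \in \N$ define $K_N := \int_S F_z P_{<N} P_{<N} F_z^* \, dz$ with $P_{<N} := \sum_{\INF{k} < N} P_k$. A direct computation gives $K_N = \int_{A_N} \Tk_u \otimes \Tk_u \, du$ for a bounded set $A_N \subset \C$, so $K_N$ is Hilbert--Schmidt. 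Together with $\int_S F_z F_z^* \, dz = \mathrm{Id}$, this shows that $A - (\mathrm{Id} - K_N) A (\mathrm{Id} - K_N) = K_N A + (\mathrm{Id} - K_N) A K_N$ is compact; expanding the remainder and applying Lemma \ref{XiaLem1} yields
\[
\|A\|_\Q \le \|(\mathrm{Id} - K_N) A (\mathrm{Id} - K_N)\| \le M^2 \sup_{z, w \in S} \|F_{z;>N}^* A F_{w;>N}\|,
\]
where $F_{z;>N} := F_z P_{\ge N}$ and $M := \sup_z \|F_z\|$.

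Splitting the matrix $F_{z;>N}^* A F_{w;>N}$ into block-diagonal and off-diagonal parts relative to the partition, the block-diagonal norm equals $\sup_{\INF{k} \ge N} \|(\tilde F_z^{(k)})^* A \tilde F_w^{(k)}\|$, while the off-diagonal contribution splits further into lattice-far blocks (controlled by Lemma \ref{XiaLem2} with a suitable $\epsilon$) and lattice-near blocks (controlled by a Schur estimate exploiting the polynomial decay $|\langle A k_u, k_v\rangle| \lesssim (1 + |u - v|)^{-(2n + \delta)}$). Choosing $R$ large enough depending on $A$ and $\delta$ makes the off-diagonal norm at most $\tfrac{1}{2M^2} \|A\|_\Q$, giving
\[
\|A\|_\Q \le 2 M^2 \sup_{\INF{k} \ge N,\, z, w \in S} \|(\tilde F_z^{(k)})^* A \tilde F_w^{(k)}\|.
\]
For each $N$, pick $k \in \Lambda$ with $\INF{k} \ge N$ and $z, w \in S$ achieving at least half this supremum; the unitary re-indexing $e_u \mapsto e_{u - k}$ identifies the block norm with $\|F_{a;R}^* A F_{a + b; R}\|$ for $a := k + z$ and $b := w - z$. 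Since $\INF{z} < 1$ gives $\INF{a} > \INF{k} - 1 \ge N - 1$ and $\INF{b} \le \INF{w} + \INF{z} < 2$, the lemma follows.

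The main technical obstacle is controlling the lattice-near off-diagonal contribution: neighboring blocks produce pairs $(x, y)$ whose separation can be as small as $1$, to which Lemma \ref{XiaLem2} does not apply, and a careful Schur bound fully exploiting the $\SLphi$ decay of $A$ is needed. It is precisely this step that forces $R$ to depend on $A$ (in particular on its decay exponent $\delta$), rather than being an absolute constant.
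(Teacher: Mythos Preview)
Your overall strategy---resolve the identity via $\int_S F_z F_z^*\,dz$, truncate by a compact $K_N$, and reduce to a supremum of block norms---matches the paper's, and your choice of a \emph{disjoint} partition $\Z = \bigsqcup_{k \in (2R-1)\Z}(k+\Z_R)$ is a natural attempt to sidestep the paper's parity/coloring trick. The gap is exactly where you flag it: the ``lattice-near'' off-diagonal contribution cannot be made small by any Schur estimate based on the $\SLphi$ decay. Concretely, for adjacent block centers $k\neq k'$ with $|k-k'|_\infty = 2R-1$, there are pairs $(x,y)\in(k+\Z_R)\times(k'+\Z_R)$ with $|x-y|_\infty=1$, and for such pairs the decay bound $|\langle B\Tk_{y+w},\Tk_{x+z}\rangle|\lesssim(1+|x-y|)^{-(2n+\delta)}$ is of order $1$. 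The Schur test is a supremum over rows, and for an $x$ sitting on the boundary of its block one gets
\[
\sum_{\substack{y\text{ in a different block}\\|x-y|<R}}(1+|x-y|)^{-(2n+\delta)}\;\asymp\;\sum_{1\le|m|_\infty<R}(1+|m|)^{-(2n+\delta)}\;\asymp\;1,
\]
a quantity bounded \emph{independently of $R$}. So increasing $R$ does not shrink this piece, and there is no reason it should be $\le\tfrac{1}{2M^2}\|A\|_\Q$.

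This is precisely why the paper does \emph{not} use a disjoint partition. Instead it first isolates the full near-diagonal band $\{(\eta,u):|\eta-u|_\infty<R\}$ (whose complement \emph{is} controlled by Lemma~\ref{XiaLem2}), and then covers that band by \emph{overlapping} blocks indexed by $u'\in R\Z$; the overlap guarantees that every near-diagonal pair lands inside some block. The residual set $A_1\cap A_2$ is either finite-rank (the $|x+u'|_\infty<N+R$ part) or has $|x-y|_\infty\ge R$ (again Lemma~\ref{XiaLem2}). The parity decomposition $R\Z=\bigcup_{\ell\in\{1,2\}^{2n}}R\Z_\ell$ then makes the overlapping blocks orthogonal within each color, at the cost of a harmless factor $4^n$. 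Your disjoint-partition approach could be repaired by mimicking this: cover the band by overlapping $\Z_R$-windows rather than tiling $\Z$ exactly, and accept the $4^n$ coloring factor in the final constant.
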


\begin{proof}  Obviously we may assume that $\|A\|_\Q > 0$ for otherwise there is nothing to prove.  We will now in fact find $a$ and $b$ as above where \begin{equation*} \|A\|_{\Q} \leq \frac{1}{4^{n + 3} C^2}   \|F_{a;R}^* A F_{a + b ; R}\|\end{equation*} and where \begin{equation}\label{Cdef} C := \sup_{z \in \C} \|F_z\|   \end{equation} (which is finite by Lemma \ref{XiaLem1}.)
To that end,  pick $B \in \SLphi$ where \begin{equation} \label{XiaLem3Est1}
\|A - B\| < \frac{1}{4^{n+3} C^4}  \|A\|_\Q. \end{equation}    Since $B \in \SLphi$, Lemma \ref{XiaLem2} tells us that there exists $R > 0$ where \begin{equation} \label{XiaLem3Est2}
\left\| \sum_{(u, v) \in \Omega} \langle B\Tk_{v + \eta}, \Tk_{u + \xi} \rangle \ e_u \otimes e_v \right\| \leq \frac{1}{16C^2} \|A\|_\Q \end{equation} whenever $\eta, \xi \in S$ and $\Omega \subset \Z \times \Z$ satisfies $|u - v|_\infty \geq R$ for any $(u, v) \in \Omega$.  We will in fact show that this $R$ has the desired property.

Clearly without loss of generality we may assume that $N > R$.  Now define the compact operator $K$ on $\Ftwophi$ by  \begin{equation*} K := \sum_{\substack{u \in \Z \\ \INF{u} < N + R}}\int_{S + u} \Tk_z \otimes \Tk_z \, dv(z) = \int_S \left(\sum_{\substack{u \in \Z \\ \INF{u} < N + R}} \Tk_{u + z} \otimes \Tk_{u + z} \, \right) dv(z)  \end{equation*} where as before $S = [0, 1)^{2n} \subset \C$.  Note that (\ref{ResOfId}) then tells us that we can write $\text{Id} - K$ as  \begin{equation*} \text{Id} - K = \int_{S} \left(\sum_{\substack{u \in \Z \\ \INF{u} \geq N + R}} \Tk_{u + z} \otimes \Tk_{u + z} \, \right) dv(z). \end{equation*}

Thus, if we define \begin{equation*} G_z := \sum_{\substack{u \in \Z \\ \INF{u} \geq N + R}} \Tk_{u + z} \otimes e_u \end{equation*} then \begin{equation*} \text{Id} - K = \int_{S} G_z G_z ^* \, dv(z). \end{equation*}
Since (\ref{ResOfId}) again gives us that \begin{equation*} \text{Id} = \int_{S} \left( \sum_{u \in \Z} \Tk_{u + z} \otimes \Tk_{u + z} \right) \, dv(z) =\int_{S} F_z F_z ^* \, dv(z), \end{equation*} we can rewrite $(\text{Id} - K)A$ as \begin{equation} \label{XiaLem3Est3} (\text{Id} - K)A = (\text{Id} - K)A \text{Id} = \int_{S} \int_{S} G_z G_z ^* A F_w F_w ^* \, dv(z) \, dv(w). \end{equation}

Now since $\|A\|_\Q = \|(\text{Id} - K)A\|_\Q$, an elementary approximation argument involving B\^{o}chner integrability in conjunction with (\ref{XiaLem3Est3}) gives us a pair $z_0, w_0 \in S$ where \begin{equation*} \label{XiaLem3Est4}  \|G_{z_0} G_{z_0} ^* A F_{w_0} F_{w_0} ^* \|_{\Q}  \geq \frac{1}{2} \|A\|_{\Q}. \end{equation*}  Furthermore, it is trivial that $G_{z_0} G_{z_0} ^* \leq F_{z_0} F_{z_0} ^*$ so by (\ref{XiaLem3Est1}) we have \begin{equation*} \|G_{z_0} ^* B F_{w_0}\|_{\Q} + \frac{1}{64C^2} \|A\|_{\Q} \geq \|G_{z_0} ^* A F_{w_0}\|_{Q} \geq \frac{1}{2 C^2} \|A\|_\Q \end{equation*} (where $C$ is from (\ref{Cdef}))  so that that \begin{equation*} \|G_{z_0} ^* B F_{w_0} \|_\Q \geq \frac{1}{4C^2} \|A\|_\Q. \end{equation*}

Now since \begin{equation*} G_{z_0} ^* B F_{w_0} = \sum_{\substack{ \eta \in \Z \\ |\eta|_{\infty} \geq N + R}} \sum_{u \in \Z} \langle B \Tk_{u + w_0}, \Tk_{\eta + z_0} \rangle e_\eta \otimes e_u,  \end{equation*} we can write $G_{z_0} ^*  B F_{w_0} = D + E$ where the ``diagonal" part $D$ is given by \begin{equation*} \sum_{\substack{ \eta \in \Z \\ |\eta|_{\infty} \geq N + R}} \left(\sum_{\substack{u \in \Z \\    |u - \eta|_\infty < R}} \langle B \Tk_{u + w_0}, \Tk_{\eta + z_0} \rangle e_\eta \otimes e_u\right)  \end{equation*} and the ``off-diagonal" part $E$ is given by \begin{equation*} \sum_{\substack{ \eta \in \Z \\ |\eta|_{\infty} \geq N + R}} \left(\sum_{\substack{u \in \Z \\    |u - \eta|_\infty \geq R}} \langle B \Tk_{u + w_0}, \Tk_{\eta + z_0} \rangle e_\eta \otimes e_u\right) \end{equation*}

\noindent Note that (\ref{XiaLem3Est2}) gives us that \begin{equation*} \|E\| \leq \frac{1}{8C^2} \|A\|_{\Q} \end{equation*}

\noindent so that \begin{equation*}  \|D\|_{\Q} \geq \frac{1}{8C^2} \|A\|_\Q \end{equation*}

 Now by elementary arguments, we have that \begin{equation*} \{(\eta, u)  \in \Z \times \Z : \INF{\eta} \geq N + R \text{ and } \INF{\eta - u} < R \} = A_1 \backslash A_2 \end{equation*} where \begin{align*} A_1 := \{(x + u', &  y + u') \in \Z \times \Z : u' \in R \Z  \\ & \text{ with } \INF{u'} \geq N  \text{ and } (x, y) \in \Z_R \times \Z_R \}\end{align*} and \begin{align*} A_2 := \{(x + u', & y + u') \in \Z \times \Z \\ &  \text{ with } (x,y) \in \Z_R \times \Z_R \text{ and } \INF{x + u'} < N + R \text{ or } \INF{x - y} \geq R\}. \end{align*}

Thus, we can write $D := D_1 - D_2$ where \begin{equation*} D_1 = \sum_{(\eta, u) \in A_1} \langle B \Tk_{u + w_0}, \Tk_{\eta + z_0} \rangle e_\eta \otimes e_u \end{equation*} and \begin{equation*} D_2 = \sum_{(\eta, u) \in A_1 \cap A_2} \langle B \Tk_{u + w_0}, \Tk_{\eta + z_0} \rangle e_\eta \otimes e_u \end{equation*}

Moreover, another application of (\ref{XiaLem3Est2}) gives us that \begin{equation*} \|D_2\|_\Q \leq \frac{1}{16C^2} \|A\|_\Q \end{equation*} so that \begin{equation*}   \|D_1\| \geq \frac{1}{16C^2} \|A\|_\Q \end{equation*}
If  \begin{equation*}  E_{u, z} := \sum_{x \in \Z_R} \Tk_{x + u + z} \otimes e_{x + u}   \end{equation*} for some given $u \in \Z$ and $z \in \C$ then note that we can write \begin{equation*} D_1 = \sum_{\substack{u \in R\Z \\ \INF{u} \geq N  }} E_{u, z_0} ^* B E_{u, w_0}. \end{equation*}

Now let $\mathbb{Z}_1 $ and $\mathbb{Z}_2$ denote the odd and even integers, respectively, and for $\ell \in \{1, 2\}^{2n}$ let $\Z _\ell := \mathbb{Z}_{\ell_1} \times \cdots \times \mathbb{Z}_{\ell_{2n}}$ so that obviously \begin{equation*} R\Z = \bigcup_{\ell \in \{1, 2\}^{2n}} R\Z_\ell. \end{equation*} Furthermore, if $\ell \in \{1, 2\}^{2n}$ is fixed and $u, u' \in R \Z_\ell$ with $u \neq u'$ then $e_{y + u'}$ is orthogonal to $e_{x + u}$ for any $x, y \in \Z_R$.  Thus,  it is easy to see that \begin{equation*} \|D_1\|    \leq 4^n \sup_{\substack{u \in R\Z \\ \INF{u} \geq N  }} \|E_{u, z_0} ^* B E_{u, w_0}\|\end{equation*}

\noindent which means that there exists some $u_0 \in R\Z$ such that $\INF{u_0} \geq N $ and \begin{equation*}   \|E_{u_0, z_0} ^* B E_{u_0, w_0}\| \geq \frac{1}{4^{\frac{5}{2} + n} C^2} \|A\|_\Q \end{equation*}

Now note that \begin{align*} \|E_{u_0, z_0} ^* B E_{u_0, w_0}\| & = \left\|\sum_{x, y \in \Z_R } \langle B\Tk_{y + u_0 + w_0 }, \Tk_{x + u_0 + z_0 }\rangle  e_{u_0 + x} \otimes  e_{u_0 + y} \right\| \\ & = \left\|\sum_{x, y \in \Z_R } \langle B\Tk_{y + u_0 + w_0}, \Tk_{x + u_0 + z_0 }\rangle  e_{ x} \otimes  e_{ y} \right\| \\ & = \|F_{u_0 + w_0 ; R} ^* B F_{u_0 + z_0 ; R }\| \end{align*}  \noindent Finally, set $a = u_0 + w_0$ and $b =  z_0 - w_0$ so that $|a|_\infty \geq N - 1$ and $\INF{b} \leq 2$.  Then according to (\ref{XiaLem3Est1}), we have that \begin{equation*} \|F_{a ; R} ^* (A - B) F_{a + b ; R}\| \leq \frac{1}{4^{n + 3} C^2} \|A\|_\Q  \end{equation*} so that \begin{equation*} \|F_{a ; R} ^* A F_{a + b ; R}\| \geq \frac{1}{4^{n + 3} C^2}\|A\|_{\Q}  \end{equation*} which completes the proof. \end{proof}

We can now prove Theorem \ref{CompOpSuff} when $p = 2.$

\noindent \textit{Proof of Theorem \ref{CompOpSuff} when $p = 2$}.  By Lemma \ref{XiaLem3} there exists some $R \in \N$ depending on $A$ and sequences $\{a_j\}, \{b_j\} \subset \C$ with \begin{equation*} \lim_{j \rightarrow \infty} |a_j| = \infty \  \ \text{ and } \ \ \sup_{j \geq 1}  |b_j| \lesssim 2 \end{equation*} where \begin{equation*} \|A\|_{\Q} \lesssim \|F_{a_j;R}^* A F_{a_j + b_j ; R}\|. \end{equation*}

However, if $R$ is large enough, then \begin{align*} \limsup_{j \rightarrow \infty} \|F_{a_j;R}^* A F_{a_j + b_j ; R}\| & = \left\|\sum_{x, y \in \Z_R} \langle A \Tk_{x + a_j + b_j}, \Tk_{y + a_j} \rangle e_y \otimes e_x \right\| \\ & \lesssim  R^{4n} \limsup_{|z| \rightarrow \infty} \sup_{w \in B(z, 3R)} |\langle Ak_z, k_w \rangle | \end{align*}  which proves Theorem \ref{CompOpSuff} when $p = 2$.

\section{Proof of Theorem \ref{CompOpNec}} \label{SectionProofs2}

In this short section we will prove Theorem \ref{CompOpNec}. First we will need the following simple result.

\begin{lemma} \label{FockDensityLemma}  If $1 \leq p < \infty$ and $S \subseteq \C$ is a Borel set with nonzero Lebesgue volume measure, then $\s \{ K(\cdot, w) : w \in S \}$ is dense in $\Fpphi$. \end{lemma}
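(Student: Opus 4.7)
The plan is a standard Hahn--Banach argument combined with the reproducing kernel identity and the fact that an entire function on $\C$ cannot vanish on a set of positive Lebesgue measure. Suppose for contradiction that $M := \s\{K(\cdot, w) : w \in S\}$ is not dense in $\Fpphi$. By Hahn--Banach there is a nonzero continuous linear functional on $\Fpphi$ that annihilates $M$. Invoking the duality $(\Fpphi)^* = \Fqphi$ from property $c)$ of Theorem \ref{FockSpacePropThm} (with $1/p + 1/q = 1$, interpreting $q = \infty$ when $p = 1$), this functional has the form $\Psi_g$ for some nonzero $g \in \Fqphi$.

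Next I would compute $\Psi_g(K(\cdot, w))$ for $w \in S$. Using the Hermitian symmetry $K(z, w) = \overline{K(w, z)}$ of the $\Ftwophi$ reproducing kernel and the fact that $Pg = g$ for $g \in \Fqphi$ (property $e)$ of Theorem \ref{FockSpacePropThm}), I obtain
\[
\Psi_g(K(\cdot, w)) = \int_{\C} K(z, w) \overline{g(z)} \, e^{-2\phi(z)} \, dv(z) = \overline{(Pg)(w)} = \overline{g(w)}.
\]
Absolute convergence of this integral, needed to identify it with $\overline{(Pg)(w)}$, follows immediately from H\"older's inequality together with the off-diagonal decay in property $a)$ of Theorem \ref{FockSpacePropThm}, which puts $K(\cdot, w)e^{-\phi(w)}$ in every $\Lpphi$. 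Since $\Psi_g$ annihilates $M$, we conclude that $g$ vanishes identically on $S$.

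Finally, $g$ is entire on $\C$ and its zero set has positive Lebesgue measure. The zero set of a nonzero entire function on $\C$ is a proper complex analytic subvariety, hence of real Hausdorff dimension at most $2n-2$ and in particular of Lebesgue measure zero. Therefore $g \equiv 0$, contradicting our choice of $g$, and the proof is complete. The only step requiring any care is the justification of the Fubini/reproducing property identity above, which as noted is routine given Theorem \ref{FockSpacePropThm}; no real obstacle is expected.
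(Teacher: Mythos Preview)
Your proof is correct and follows essentially the same route as the paper: a Hahn--Banach argument using the duality $(\Fpphi)^* = \Fqphi$, the identity $\Psi_g(K(\cdot,w)) = \overline{g(w)}$ via $Pg = g$, and the identity theorem for entire functions to force $g \equiv 0$. The only differences are cosmetic---you spell out the integrability justification and the structure of zero sets of entire functions, which the paper leaves implicit.
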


\begin{proof} Let $q$ be the conjugate of exponent of $p$.  If $g \in \Fqphi = (\Fpphi)^*$ (see $c)$ in Theorem \ref{FockSpacePropThm}) annihilates $\s \{ K(\cdot, w) : w \in S \}$, then $e)$ in Theorem \ref{FockSpacePropThm} implies that \begin{equation*} g(w) = \incn g(u) \overline{K(u, w)}  e^{-2\phi(u)} \, dv(u) = 0 \end{equation*} for any $w \in S$, which implies that $g \equiv 0$ since $S$ has nonzero Lebesgue volume measure.  The proof then immediately follows by the Hahn-Banach theorem.
\end{proof}

\begin{lemma} \label{CompOpNec1} Finite rank operators on $\Fpphi$ are in the norm closure of the algebra generated by Toeplitz operators with point mass measure symbols when $1 \leq p < \infty$.   \end{lemma}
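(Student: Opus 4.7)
The plan is to identify single and double Toeplitz operators with point-mass symbols as specific rank-one operators, and then use Lemma \ref{FockDensityLemma} to approximate an arbitrary finite rank operator in operator norm by finite linear combinations of such products.

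First I would compute, directly from the definition of $T_\mu$, that $(T_{\delta_z}f)(w) = f(z)\, K(w,z)\, e^{-2\phi(z)}$, exhibiting $T_{\delta_z}$ as the rank-one operator $e^{-2\phi(z)}\,K(\cdot,z) \otimes K(\cdot,z)$, where $u \otimes v$ denotes the rank-one operator $h \mapsto \Psi_v(h)\, u$ in the pairing of Theorem \ref{FockSpacePropThm}(c). Composing two such operators then yields
\begin{equation*}
T_{\delta_z}\, T_{\delta_w} \;=\; e^{-2\phi(z)-2\phi(w)}\, K(z,w)\, \bigl(K(\cdot,z) \otimes K(\cdot,w)\bigr),
\end{equation*}
so whenever $K(z,w) \neq 0$ the rank-one operator $K(\cdot,z) \otimes K(\cdot,w)$ is a scalar multiple of $T_{\delta_z}\, T_{\delta_w}$, hence lies in the algebra generated by $\{T_{\delta_z} : z \in \C\}$.

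To bypass the possible zeros of $K$, I would invoke Theorem \ref{FockSpacePropThm}(f), which forces $K(z,z) \approx e^{2\phi(z)} > 0$, together with the continuity of $K$, to fix an open neighborhood $U$ of the origin in $\C$ on which $K(z,w) \neq 0$ for every $z,w \in U$. All rank-one operators $K(\cdot,z) \otimes K(\cdot,w)$ with $z,w \in U$ then belong to the algebra in question.

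To finish, I would write an arbitrary finite rank operator as $T = \sum_{i=1}^{N} f_i \otimes g_i$ with $f_i \in \Fpphi$ and $g_i \in \Fqphi$, and apply Lemma \ref{FockDensityLemma} to the set $U$ (which has positive Lebesgue measure) to approximate each $f_i$ in the $\Fpphi$ norm and each $g_i$ in the $\Fqphi$ norm by elements of $\s\{K(\cdot,z) : z \in U\}$. The bilinear estimate
\begin{equation*}
\|u' \otimes v' - u \otimes v\|_{\mathrm{op}} \;\leq\; \|u' - u\|_{\Fpphi}\,\|v'\|_{\Fqphi} \;+\; \|u\|_{\Fpphi}\,\|v' - v\|_{\Fqphi}
\end{equation*}
then upgrades these approximations to operator-norm approximations of each $f_i \otimes g_i$, and therefore of $T$, by finite linear combinations of operators of the form $K(\cdot,z) \otimes K(\cdot,w)$ with $z,w \in U$. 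The only genuine obstacle is the possible vanishing of $K(z,w)$, which is precisely why the approximating nodes must be restricted to the neighborhood $U$; the remaining steps are routine.
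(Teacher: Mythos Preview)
Your proof is correct and follows essentially the same approach as the paper: both identify $T_{\delta_z}T_{\delta_w}$ as a scalar multiple of the rank-one operator $K(\cdot,z)\otimes K(\cdot,w)$ and then invoke Lemma~\ref{FockDensityLemma} to approximate arbitrary finite rank operators. The only cosmetic difference is in how the zeros of $K$ are avoided: the paper works with the $w$-dependent set $\CalZ_w=\{z:K(w,z)\neq 0\}$ (which has positive measure since $K(w,w)=\|K(\cdot,w)\|^2>0$) and lets $w$ range over all of $\C$, whereas you fix a single open neighborhood $U$ of the origin on which $K$ is nonvanishing by continuity and restrict both indices to $U$; either device works.
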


\begin{proof} Since \begin{equation*} 0 < K(w, w) = \incn |K(w, z)|^2 e^{-2\phi(z)} \, dv(z) \end{equation*} the set \begin{equation*} \CalZ_w := \{z \in \C : K(w, z) \neq 0\}\end{equation*}  trivially has nonzero Lebesgue volume measure for each $w \in \C.$  Thus,  Lemma \ref{FockDensityLemma} tells us that $\s\{K(\cdot, z) : z \in \CalZ_w \}$ is dense in $\Fpphi$ for each $w \in \C$, which in turn implies that  $\s\{K(\cdot, z) \otimes K(\cdot, w) : w \in \C, z \in \CalZ_w\}$ is dense (with respect to the $\Fpphi$ operator norm) in the space of finite rank operators.

  The proof is then completed by observing that \begin{equation*} K(\cdot, z) \otimes K(\cdot, w) = \frac{e^{2\phi(z) + 2\phi(w)}} {\overline{K(w, z)}}  T_{\delta_z} T_{\delta_w} \end{equation*} where $\delta_z$ and $\delta_w$ are the point mass measures at $z, w \in \C$ with $z \in \CalZ_w$.
\end{proof}

 \begin{lemma} \label{CompOpNec2}  Given $w \in \C$, let \begin{equation*} F_w ^\epsilon (z) := \frac{c_n}{\epsilon^{2n}} \chi_{B(w, \epsilon)} (z) \end{equation*}  where $c_n$ is the volume of the unit ball in $\C$.  Then we have \begin{equation*} \lim_{\epsilon \rightarrow 0^+} \|T_{F_w ^\epsilon} - T_{\delta_w}\|_{\Fpphi \rightarrow \Fpphi} = 0 \end{equation*} for each $1 < p < \infty$.\end{lemma}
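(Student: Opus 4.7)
The plan is to handle the Hilbert-space case $p=2$ directly via a rank-one integral representation of both Toeplitz operators, obtain uniform (in $\epsilon$) boundedness at the opposite endpoint $p=\infty$ by kernel decay, and then cover the rest of the range by complex interpolation and a duality argument based on self-adjointness.

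First I would observe that, writing $\Tk_u = K(\cdot,u)\,e^{-\phi(u)}$, one has the identifications $T_{\delta_w} = \Tk_w \otimes \Tk_w$ and, as a Bochner integral of Hilbert--Schmidt operators on $\Ftwophi$,
\begin{equation*}
T_{F_w^\epsilon} \;=\; \frac{1}{|B(w,\epsilon)|}\int_{B(w,\epsilon)} \Tk_u \otimes \Tk_u \, dv(u),
\end{equation*}
so that the difference equals $\frac{1}{|B(w,\epsilon)|}\int_{B(w,\epsilon)} (\Tk_u \otimes \Tk_u - \Tk_w \otimes \Tk_w)\, dv(u)$. Combining the elementary bound $\|\Tk_u \otimes \Tk_u - \Tk_w \otimes \Tk_w\| \leq \|\Tk_u - \Tk_w\|(\|\Tk_u\| + \|\Tk_w\|)$ with the uniform estimate $\|\Tk_u\|_{\Ftwophi} \approx 1$ from part $f)$ of Theorem \ref{FockSpacePropThm} and Minkowski's integral inequality reduces the $p=2$ case to showing that $\sup_{u \in B(w,\epsilon)} \|\Tk_u - \Tk_w\|_{\Ftwophi} \to 0$ as $\epsilon \to 0$. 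The latter follows from the explicit formula
\begin{equation*}
\|\Tk_u - \Tk_w\|_{\Ftwophi}^2 = K(u,u)e^{-2\phi(u)} - 2\operatorname{Re}\bigl[K(w,u)e^{-\phi(u)-\phi(w)}\bigr] + K(w,w)e^{-2\phi(w)},
\end{equation*}
obtained via the reproducing property, together with joint continuity of $K$ and continuity of $\phi$.

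Next I would handle the opposite endpoint $p=\infty$. The pointwise Gaussian off-diagonal decay in part $a)$ of Theorem \ref{FockSpacePropThm} gives $|K(z,u)|e^{-\phi(z)-\phi(u)} \lesssim e^{-\epsilon_0|z-u|}$, from which a direct estimate yields uniform (in $\epsilon$) boundedness of both $T_{F_w^\epsilon}$ and $T_{\delta_w}$ on $\Finfphi$, hence of their difference. Applying Theorem \ref{CompInterpTHM} with $p_0 = 2$ and $p_1 = \infty$ and interpolating the two operator-norm bounds then produces convergence in the $\Fpphi$-operator norm for every $2 \leq p < \infty$. For $1 < p < 2$, since the symbols $F_w^\epsilon$ and $\delta_w$ are both real, each of $T_{F_w^\epsilon}$ and $T_{\delta_w}$ is self-adjoint with respect to the $\Fpphi$-$\Fqphi$ pairing of part $c)$ of Theorem \ref{FockSpacePropThm}. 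Consequently
\begin{equation*}
\|T_{F_w^\epsilon} - T_{\delta_w}\|_{\Fpphi \to \Fpphi} \;=\; \|T_{F_w^\epsilon} - T_{\delta_w}\|_{\Fqphi \to \Fqphi}
\end{equation*}
where $q>2$ is the conjugate exponent, and this case is covered by the previous step.

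I expect the only genuinely non-trivial point to be the clean justification of the Bochner representation $T_{F_w^\epsilon} = \frac{1}{|B(w,\epsilon)|}\int_{B(w,\epsilon)} \Tk_u \otimes \Tk_u\, dv(u)$ on $\Ftwophi$ and the subsequent exchange of the operator norm with the integral via Minkowski; once this is granted, the reproducing-kernel computation of $\|\Tk_u - \Tk_w\|_{\Ftwophi}$, the $\Finfphi$ bound, the interpolation, and the duality step are all short and routine given the machinery already developed in Section \ref{Preliminary results}.
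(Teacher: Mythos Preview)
Your proposal is correct, and the global architecture---handle $p=2$ directly, establish uniform $\Finfphi$ boundedness, then interpolate and dualize---matches the paper exactly. The difference lies in the $p=2$ step. The paper does not use the Bochner representation $T_{F_w^\epsilon} = |B(w,\epsilon)|^{-1}\int_{B(w,\epsilon)} \Tk_u\otimes\Tk_u\,dv(u)$; instead it exploits self-adjointness on $\Ftwophi$ to pass to the numerical range, writes $\langle (T_{F_w^\epsilon}-T_{\delta_w})h,h\rangle$ as the average of $|h(z)|^2 e^{-2\phi(z)} - |h(w)|^2 e^{-2\phi(w)}$ over $B(w,\epsilon)$, and then invokes the gradient bound in part $g)$ of Theorem \ref{FockSpacePropThm} to get a Lipschitz constant for $|h|^2 e^{-2\phi}$ that is \emph{uniform} over the unit ball of $\Ftwophi$. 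Your route trades this uniform Lipschitz input for the simpler pointwise continuity of $u\mapsto \Tk_u$ in $\Ftwophi$, which is enough for the qualitative conclusion and avoids appealing to the derivative estimate in $g)$; the paper's route, on the other hand, gives an explicit rate $O(\epsilon)$ essentially for free and sidesteps the (admittedly mild) verification of the Bochner identity you flagged as the one non-trivial point.
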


 \begin{proof} By an easy application of Theorem \ref{FockSpacePropThm} we have that $K(z, \cdot) \in \Fonephi$ for each $z \in \C$.  Thus, by Theorems \ref{FockSpacePropThm} and \ref{FockCarThm}, we have that \begin{equation*} \|T_\mu\|_{\Finfphi \rightarrow \Finfphi} \lesssim \|\mu\|_* . \end{equation*}   Therefore, by complex interpolation and duality, it is enough to prove the lemma for $p = 2$.  To that end, note that $T_{F_w ^\epsilon} - T_{\delta_w}$ is obviously bounded and self adjoint on $\Ftwophi$, which means that \begin{equation*} \|T_{F_w ^\epsilon} - T_{\delta_w}\|_{\Ftwophi \rightarrow \Ftwophi} = \sup_{\|h\|_{\Ftwophi} = 1} \left| \langle (T_{F_w ^\epsilon}  - T_{\delta_w}) h, h \rangle \right|. \end{equation*}

 However, \begin{align*} \left|\langle (T_{F_w ^\epsilon}  - T_{\delta_w}) h, h \rangle \right| & = \left| \incn |h(z)|^2 F_w ^\epsilon (z)  e^{- 2 \phi(z)} \, dv(z) - \incn |h(z)|^2   e^{- 2 \phi(z)} \, d\delta_{w} (z) \right| \\ & =  \left| \frac{c_n}{\epsilon^{2n}} \int_{B(w, \epsilon)} |h(z)|^2 e^{-2 \phi(z)} dv(z) - |h(w)|^2 e^{-2\phi(w)} \right| \\ & \leq  \frac{c_n}{\epsilon^{2n}} \int_{B(w, \epsilon)} \left||h(z)|^2 e^{- 2 \phi(z)} - |h(w)|^2 e^{- 2 \phi(w)} \right| dv(z) \end{align*} \noindent where $c_n$ is the volume of the unit ball in $\C$.
  Moreover, if $\|h\|_{\Ftwophi} = 1$, then $g)$ in Theorem \ref{FockSpacePropThm} tells us that $|h|^2 e^{- 2 \phi}$ is Lipschitz with Lipschitz constant independent of $h$, which completes the proof. \end{proof}

Note that by an easy application of Theorem $2$ in \cite{SV} we have that $T_f$ is compact on $\Fpphi$ for $1 \leq p < \infty$ if $f \in C_c^\infty(\C)$.  Combining this fact with Lemmas \ref{CompOpNec1} and \ref{CompOpNec2} gives us the following.

\begin{theorem} \label{CompOpNecPrelim} Finite rank operators are in $\Tpphi(C_c ^\infty(\C))$ when $1 < p < \infty$.    In particular, since all $L^p$ spaces have the bounded approximation property (see \cite{W}), the space of compact operators on $\Fpphi$ coincides with $\Tpphi(C_c ^\infty(\C))$.   \end{theorem}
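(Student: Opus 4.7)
The plan is to establish the first assertion by chaining Lemmas \ref{CompOpNec1} and \ref{CompOpNec2} with one additional density step, and then to deduce the coincidence with the compact operators via the bounded approximation property.

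For the first assertion, Lemma \ref{CompOpNec1} reduces the problem to showing that each point-mass Toeplitz operator $T_{\delta_w}$ belongs to $\Tpphi(C_c^\infty(\C))$, and Lemma \ref{CompOpNec2} reduces this further to showing that each $T_{F_w^\epsilon}$ belongs to $\Tpphi(C_c^\infty(\C))$, where $F_w^\epsilon = \frac{c_n}{\epsilon^{2n}}\chi_{B(w,\epsilon)}$. For this step I would invoke the bound
\begin{equation*}
\|T_f\|_{\Fpphi \to \Fpphi} \lesssim \|f\|_* \leq \|f\|_{L^1(\C, dv)},
\end{equation*}
which follows at once from Theorem \ref{FockCarThm} together with the trivial estimate $\sup_{z \in \C} \int_{B(z,1)} |f|\,dv \leq \|f\|_{L^1}$. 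Since $F_w^\epsilon \in L^1(\C,dv)$ and $C_c^\infty(\C)$ is dense in $L^1(\C,dv)$, for each $\eta > 0$ one can pick $g \in C_c^\infty(\C)$ with $\|F_w^\epsilon - g\|_{L^1} < \eta$, whence $\|T_{F_w^\epsilon} - T_g\|_{\Fpphi \to \Fpphi} \lesssim \eta$. Combining the three reductions yields the first assertion.

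For the coincidence, one inclusion is straightforward: each $T_f$ with $f \in C_c^\infty(\C)$ is compact on $\Fpphi$ by Theorem $2$ of \cite{SV}, and since the compact operators form a norm-closed two-sided ideal in $\mathcal{B}(\Fpphi)$, every element of $\Tpphi(C_c^\infty(\C))$ is compact. For the reverse inclusion I would invoke BAP: by $d)$ and $e)$ of Theorem \ref{FockSpacePropThm}, the projection $P$ realizes $\Fpphi$ as a complemented subspace of $\Lpphi$, so BAP of $\Lpphi$ transfers to $\Fpphi$. Hence there is a uniformly bounded net $\{T_\lambda\}$ of finite rank operators on $\Fpphi$ with $T_\lambda \to \mathrm{Id}$ strongly, and for any compact $A$ on $\Fpphi$ the convergence is uniform on the precompact image of the unit ball under $A$, giving $\|T_\lambda A - A\|_{\Fpphi \to \Fpphi} \to 0$. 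Each $T_\lambda A$ is finite rank, hence in $\Tpphi(C_c^\infty(\C))$ by the first assertion, and since this set is norm-closed we conclude $A \in \Tpphi(C_c^\infty(\C))$. The argument is essentially routine once Lemmas \ref{CompOpNec1} and \ref{CompOpNec2} are in hand; the only point deserving any care is the standard transfer of BAP from $\Lpphi$ to the complemented subspace $\Fpphi$.
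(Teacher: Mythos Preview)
Your proposal is correct and follows essentially the same route as the paper. The paper's proof is a one-liner that invokes Lemmas \ref{CompOpNec1} and \ref{CompOpNec2} together with compactness of $T_f$ for $f\in C_c^\infty(\C)$; you have made explicit the intermediate $L^1$-density step (approximating $T_{F_w^\epsilon}$ by $T_g$ with $g\in C_c^\infty(\C)$ via $\|T_f\|\lesssim\|f\|_*\le\|f\|_{L^1}$) that the paper leaves tacit here but states elsewhere, and you have spelled out the standard BAP transfer to the complemented subspace $\Fpphi$ that the paper simply cites.
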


The proof that $\{T_f : f \in C_c ^\infty(\C)\}$ is $\Fp$  operator norm dense in the space of compact operators will use Theorem \ref{CompOpNecPrelim} in conjunction with the ideas in \cite{B} p. 3136, which we now elaborate.

Note that the proof of Theorem \ref{CompOpNecPrelim} actually shows that $\s \{T_f T_g : f, g \in C_c ^\infty(\C)\}$ is $\Fp$ operator norm  dense in the space of compact operators when $1 < p < \infty$.  Thus, to show that $\{T_f : f \in C_c ^\infty(\C)\}$ is $\Fp$  operator norm dense in the space of compact operators on $\Fp$, it is enough to show that $\{T_f : f \in C_c ^\infty(\C)\}$ is $\Fp$ operator norm dense in $\s \{T_f T_g : f, g \in C_c ^\infty(\C)\}$.

To that end, Let $\F$ be the usual $L^2$ - Fourier transform on $\C$ where we identify $\C$ with $\mathbb{R}^{2n}$ in the canonical way.  Now if $f_1, f_2 \in C_c ^\infty(\C)$, then it is elementary that there exists sequences $\{f_{j, \ell}\}_{\ell = 1}^\infty \subset \F (C_c ^\infty(\C))$ for $j = 1, 2$ such that $\lim_{\ell \rightarrow \infty} f_{j, \ell} = f_j$ uniformly on $\C$, where $\F (C_c ^\infty(\C))$ is the image of $C_c ^\infty(\C)$ under $\F$. Furthermore, by Theorem $24$ in \cite{B}, we have that \begin{equation} \label{ToepCompForm1}  T_{f} T_{g} = T_{f \sharp_\alpha g} \end{equation} where the ``product" $\sharp_\alpha$ is given by
\begin{equation} \label{ToepCompForm2} f \sharp_\alpha g = \sum_{\gamma \in \mathbb{N}_0 ^n} \frac{1}{(-\alpha)^{|\gamma|}\gamma!} \frac{\partial ^{|\gamma|} f}{\partial z ^\gamma} \cdot \frac{\partial ^{|\gamma|} g}{\partial \overline{z} ^\gamma} \end{equation} whenever $f, g \in \F(C_c ^\infty(\C))$.

By uniform convergence and (\ref{ToepCompForm1}), we have that \begin{equation*} T_{f_1} T_{f_2} =  \lim_{\ell \rightarrow \infty} T_{f_{1, \ell}} T_{f_{2, \ell}} =  \lim_{\ell \rightarrow \infty} T_{f_{1, \ell} \sharp_\alpha f_{2, \ell}} \end{equation*} where the limit is in the $\Fpphi$ operator norm.  Finally, since each $f_{j, \ell}$ is in the Schwartz space of $\C$,  it is clear that each $f_{1, \ell} \sharp_\alpha f_{2, \ell}$ is smooth and vanishes at infinity.  Thus, each $f_{1, \ell} \sharp_\alpha f_{2, \ell}$ can itself be uniformly approximated on $\C$ by functions in $C_c ^\infty (\C)$, which completes the proof that $\{T_f : f \in C_c ^\infty(\C)\}$ is dense in the space of compact operators on $\Fp$.

Finally, we will complete the proof of Theorem \ref{CompOpNec} when $p = 2$.   As was stated in the introduction, the proof is very similar to the proof of Theorem $9$ in \cite{BC} and so we will only outline the proof.  To that end, given any bounded operator $X$ on $\Ftwophi$, let $K_X(w, z)$ be the function defined by \begin{equation*} K_X(w, z) := (X^* K(\cdot, z))(w) \end{equation*} so that $K_X(w, z)$ is analytic in $w$ and conjugate analytic in $z$.

 Note that $a)$ in Theorem $2.1$ immediately tells us that $P M_S : \Ltwophi \rightarrow \Ltwophi$ is Hilbert-Schmidt if $S \subseteq \C$ is compact, which easily implies that $T_f$ is trace class on $\Ftwophi$ when $f \in C_c ^\infty (\C)$.  Thus, if $f \in C_c ^\infty (\C)$ and $X$ is any bounded operator on $\Ftwophi$, then $T_f X$ is trace class on $\Ftwophi$ and repeating word for word the proof of Theorem $8$ in \cite{BC} gives us that \begin{equation}\label{TraceFormula}  \tr (T_g X) = \int_{\C} g(w) \overline{K_X (w, w)} e^{-2\phi(w)} \, dv(w). \end{equation}

Now suppose that $\left\{T_f : f \in C_c ^\infty (\C) \right\}$ is \textit{not} dense in the space of compact operators on $\Ftwophi$.  Then by the Hahn-Banach theorem and duality, there exists a non-zero trace class operator $X$ on $\Ftwophi$ where $\tr(T_g X) = 0$ for any $g \in C_c ^\infty(\C)$.  However, this implies that \begin{equation*} 0 = \int_{\C} g(w) \overline{K_X (w, w)} e^{-2\phi(w)} \, dv(w) \end{equation*} for any $g \in C_c ^\infty(\C)$, which by elementary arguments implies that $K_X(w, w) \equiv 0$.

The proof will be completed if we can show that \begin{equation*} K_X(w, w) \equiv 0 \Longrightarrow X = 0. \end{equation*}  To that end, since $K_X(w, z)$ is analytic in $w$ and conjugate analytic in $z$ and $K_X(w, w) \equiv 0$, a standard result in several complex variables implies that $K_X(w, z) \equiv 0$.  However, since $\s \{K(\cdot, z) : z \in \C\}$ is dense in $\Ftwophi$, the condition $K_X(w, z) \equiv 0$ implies that $X = 0$.  (It should be noted that the argument in this paragraph is by now standard and that the exact same argument tells us that the Berezin transform is injective on $\Fpphi$ when $1 < p < \infty$. )

It should be remarked that a very similar argument also shows that $\left\{T_f : f \in C_c ^\infty (\C) \right\}$ is trace norm dense in the trace class of $\Ftwophi$ (which was proved in \cite{BC} for the classical Fock space $\Ft$.)

\section{Essential norm estimates} \label{EssNormSec}
In this section we prove Theorems \ref{GenEssNormProp} and \ref{ToepEssNormThm}. First however we will need the following two Lemmas, the second of which is similar to Proposition $4.4$ in \cite{MW}.

\begin{lemma} \label{GenEssNormLem1} If $K$ is compact on $\Fpphi$ and $1 < p < \infty$, then \begin{equation*}  \limsup_{R \rightarrow \infty} \|M_{\chi_{B(0, R) ^c}} K   \|_{\Fpphi \rightarrow \Lpphi} = 0 \end{equation*} \end{lemma}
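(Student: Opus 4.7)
The plan is to exploit compactness of $K$ via a precompactness argument on the image of the closed unit ball, reducing the statement to the (elementary) fact that $\|M_{\chi_{B(0,R)^c}} f\|_{\Lpphi} \to 0$ for any fixed $f \in \Lpphi$. The multiplication operator $M_{\chi_{B(0,R)^c}}$ is a contraction on $\Lpphi$ (since $|\chi_{B(0,R)^c}| \leq 1$ pointwise), so estimates for it on a dense subset or on a finite collection of vectors will transfer to uniform estimates on norm-bounded sets, provided the set is totally bounded.

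First I would fix $\epsilon > 0$ and use compactness of $K : \Fpphi \to \Fpphi$ together with the continuous inclusion $\Fpphi \hookrightarrow \Lpphi$ to conclude that $K(B)$ is a totally bounded subset of $\Lpphi$, where $B$ denotes the closed unit ball of $\Fpphi$. Thus there exist finitely many $f_1, \ldots, f_N \in \Lpphi$ such that for every $h \in B$ one can choose $j = j(h)$ with $\|Kh - f_j\|_{\Lpphi} < \epsilon/2$. Applying $M_{\chi_{B(0,R)^c}}$ (a contraction on $\Lpphi$) gives
\begin{equation*}
\|M_{\chi_{B(0,R)^c}} Kh\|_{\Lpphi} \leq \|M_{\chi_{B(0,R)^c}} f_j\|_{\Lpphi} + \frac{\epsilon}{2} \leq \max_{1 \leq j \leq N} \|M_{\chi_{B(0,R)^c}} f_j\|_{\Lpphi} + \frac{\epsilon}{2}.
\end{equation*}

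For each fixed $j$, the function $|f_j|^p e^{-p\phi}$ is integrable on $\C$, and $\chi_{B(0,R)^c} |f_j|^p e^{-p\phi} \to 0$ pointwise as $R \to \infty$ while being dominated by $|f_j|^p e^{-p\phi}$. The Lebesgue dominated convergence theorem then yields $\|M_{\chi_{B(0,R)^c}} f_j\|_{\Lpphi} \to 0$, and since we are taking a maximum over finitely many $j$ we obtain an $R_0$ such that $\max_j \|M_{\chi_{B(0,R)^c}} f_j\|_{\Lpphi} < \epsilon/2$ for all $R \geq R_0$. Taking the supremum over $h \in B$ gives $\|M_{\chi_{B(0,R)^c}} K\|_{\Fpphi \to \Lpphi} \leq \epsilon$ for $R \geq R_0$, and since $\epsilon > 0$ was arbitrary, the claim follows.

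There is no real obstacle here; the only thing one must be slightly careful about is that the net $\{f_j\}$ is chosen in $\Lpphi$ rather than $\Fpphi$ (so that the triangle inequality above is valid for the $\Lpphi$ norm), which is legitimate because $K$ maps into $\Fpphi \subset \Lpphi$ and the $\Fpphi$-to-$\Lpphi$ inclusion is bounded. Alternatively, one could invoke the bounded approximation property of $L^p$ and approximate $K$ in operator norm by finite-rank operators, handle those rank-one-by-rank-one using the same dominated convergence argument, and control the tail by $\|M_{\chi_{B(0,R)^c}}\|_{\Lpphi \to \Lpphi} \leq 1$; this is essentially the same proof organized slightly differently.
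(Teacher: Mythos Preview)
Your proof is correct and takes a genuinely different route from the paper's. The paper reduces to the special case $K = T_f$ with $f \in C_c^\infty(\C)$ by invoking Theorem \ref{CompOpNec} (compact operators lie in $\Tpphi(C_c^\infty(\C))$), and then carries out an explicit reproducing-kernel estimate showing $\|M_{\chi_{B(0,R)^c}} T_f\|_{\Fpphi \to \Lpphi} \lesssim \|f\|_{L^\infty} e^{-\epsilon(R-M)/2}$ where $M$ bounds the support of $f$. Your argument instead bypasses Theorem \ref{CompOpNec} entirely: you use only that $K(B)$ is totally bounded in $\Lpphi$, that $M_{\chi_{B(0,R)^c}}$ is a contraction, and dominated convergence on a finite $\epsilon/2$-net. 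This is more elementary and more general (it works for any compact operator from a Banach space into an $L^p$ space, with no structure on the domain needed), and it removes the logical dependence on the rather substantial result that compact operators are approximable by Toeplitz operators with smooth compactly supported symbols. What the paper's approach buys, on the other hand, is an explicit exponential rate of decay in $R$ for the building blocks $T_f$, which your argument does not provide; but since the lemma only asks for the qualitative limit, that quantitative information is not needed here.
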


\begin{proof} By Theorem \ref{CompOpNec} and an easy approximation argument, it is enough to prove the result for $K = T_f$ where $f \in C_c ^\infty(\C)$.

For that matter, let $S_f = \supp f$ and let $M = \sup \{|w| : w \in S_f\}$.  Furthermore, assume without loss of generality that $R > M$.    If $g \in \Fpphi$ with $\|g\|_{\Fpphi} = 1$ and $q$ is the conjugate exponent of $p$, then we have
\begin{align*} |e^{-\phi(z)} {\chi_{B(0, R) ^c} (z)}T_f g(z)|  & \leq \chi_{B(0, R) ^c} (z) \int_{S_f}  |f(w)||g(w)| |K(z, w)| e^{-\phi(z)} e^{-2\phi(w)} \, dv(w) \\ & \leq \|f\|_{L^\infty}  \chi_{B(0, R) ^c} (z) \left(\int_{S_f} \left(e^{\phi(z)} |K(z, w)| e ^{\phi(w)}\right)^q \, dv(w) \right)^\frac{1}{q} \\ & \lesssim \|f\|_{L^\infty} \chi_{B(0, R) ^c} (z)\left(\int_{S_f} e^{-q\epsilon |z - w|} \, dv(w) \right)^\frac{1}{q} \\ & \lesssim \|f\|_{L^\infty} e^{-\frac{ \epsilon (R - M) }{2}} e^{-\frac{\epsilon(|z| - M)}{2}} \end{align*} which immediately implies that \begin{equation*} \| M_{\chi_{B(0, R) ^c}} T_f g\|_{\Fpphi \rightarrow \Lpphi} \lesssim \|f\|_{L^\infty} e^{-\frac{ \epsilon (R - M) }{2}}   \end{equation*} where $\epsilon$ is from Theorem \ref{FockSpacePropThm}.  Letting $R \rightarrow \infty$ now completes the proof.  \end{proof}

Before we prove the next lemma, we will need to use the simple covering of $\C$ from \cite{BI}.  In particular, fix $d > 0$ and enumerate the disjoint family of sets
$\{[- d, d)^{2n} + \sigma\}_{\sigma \in 2 d \mathbb{Z}^{2n}}$  as $\{F_j\}_{j =
1}^\infty$ and for this fixed $d$ let  $G_j = \{z \in \mathbb{C}^n : \dist_\infty (z, F_j) \leq d
\}$ where $\dist_\infty (z, F_j)$ is the distance between $z$ and $F_j$ in the $|\cdot|_\infty$
norm. The following properties now hold trivially from the definitions above:

 \begin{list}{}{\setlength\parsep{0in}} \item $(a) \ F_j \cap F_k = \emptyset $
if $j \neq k$, \item $(b) \ $Every $z \in \mathbb{C}^n$ belongs to at most $2^{2n}$ of the sets
$G_j$, \item $(c) \  \diam(G_j) \leq 4d \sqrt{2n} $ where $\diam(G_j)$ is the
Euclidean diameter of $G_j$. \end{list}

\begin{lemma} \label{GenEssNormLem2} If $\epsilon  > 0$ and $A \in \SLphi$, then there exists $d = d(A) > 0$ such that \begin{equation*} \left\|AP - \sum_{j} M_{\chi_{F_j}}  AP M_{\chi_{G_j}} \right\|_{\Fpphi \rightarrow \Lpphi} < \epsilon \end{equation*} \end{lemma}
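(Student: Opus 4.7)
My plan is to view the difference $AP - \sum_j M_{\chi_{F_j}} AP M_{\chi_{G_j}}$ as an integral operator on $\Lpphi$ whose kernel is supported off the diagonal by at least distance $d$, and then apply a Schur test using the off-diagonal decay built into the definition of $\SLphi$. Since $\{F_j\}$ partitions $\C$ (up to null sets), $\sum_j \chi_{F_j} \equiv 1$, and $F_j \subseteq G_j$ gives $1 - \chi_{G_j}(u) = \chi_{G_j^c}(u)$ whenever $z \in F_j$. Using the standard integral representation on $\Lpphi$,
\begin{equation*} (APf)(z) = \int_{\C} \mathcal{K}_A(z, u) f(u) e^{-2\phi(u)} \, dv(u), \end{equation*}
where $\mathcal{K}_A(z, u) := \overline{(A^* K(\cdot, z))(u)}$ (obtained by pairing $Pf$ against $A^*K(\cdot, z) \in \Ftwophi$ and using that $P$ is self-adjoint on $\Ftwophi$ with $P A^*K(\cdot,z) = A^*K(\cdot,z)$), a direct computation yields that for $f \in \Lpphi$ the difference has the form $\int S_d(z, u) f(u) e^{-2\phi(u)} \, dv(u)$ with
\begin{equation*} S_d(z, u) := \sum_j \chi_{F_j}(z)\, \chi_{G_j^c}(u)\, \mathcal{K}_A(z, u). \end{equation*}

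The key geometric observation will be that when $z \in F_j$ and $u \notin G_j$ one has $|z-u|_\infty \geq \dist_\infty(u, F_j) > d$, and hence $|z - u| > d$ throughout the support of $S_d$. Combined with the $\SLphi$ bound on $A$ and property $f)$ of Theorem \ref{FockSpacePropThm} (which turns the normalized-kernel estimate into a pointwise estimate on $\mathcal{K}_A$), this yields
\begin{equation*} |S_d(z, u)|\, e^{-\phi(z) - \phi(u)} \lesssim \chi_{\{|z-u| > d\}} (1 + |z-u|)^{-(2n + \delta)}. \end{equation*}
After the standard substitution $\tilde{f} := f e^{-\phi}$, the operator $AP - \sum_j M_{\chi_{F_j}} AP M_{\chi_{G_j}}$ becomes an unweighted integral operator on $L^p(\C, dv)$ whose kernel has this same magnitude, and both Schur integrals in $z$ and in $u$ are controlled by $\int_{|w| > d} (1 + |w|)^{-(2n+\delta)} \, dv(w) \lesssim d^{-\delta}$. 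The Schur test then gives
\begin{equation*} \left\|AP - \sum_j M_{\chi_{F_j}} AP M_{\chi_{G_j}}\right\|_{\Lpphi \to \Lpphi} \lesssim d^{-\delta}, \end{equation*}
and since $\Fpphi \hookrightarrow \Lpphi$ isometrically, choosing $d$ large enough that the right-hand side is below $\epsilon$ completes the argument.

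The main obstacle, modest as it is, will be the book-keeping of (i) justifying the integral-kernel representation of $AP$ on all of $\Lpphi$, for which properties $a)$ and $d)$ of Theorem \ref{FockSpacePropThm} provide the absolute convergence needed to move the integral past the $A$, and (ii) swapping the $j$-sum with the integral defining $AP$, which is harmless because local finiteness of the cover $\{G_j\}$ (property (b) of the cover) makes the sum defining $S_d$ have at most one nonzero term in $z$. Everything beyond these book-keeping checks is a routine weighted Schur estimate.
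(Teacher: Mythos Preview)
Your proof is correct and follows essentially the same strategy as the paper: write the difference as an integral operator with kernel $S_d(z,u)=\sum_j \chi_{F_j}(z)\chi_{G_j^c}(u)\,\overline{(A^*K(\cdot,z))(u)}$, observe that $z\in F_j$ and $u\in G_j^c$ force $|z-u|>d$, and use the $\SLphi$ decay together with property $f)$ of Theorem~\ref{FockSpacePropThm} to bound the reduced kernel $e^{-\phi(z)}|S_d(z,u)|e^{-\phi(u)}$ by $\chi_{\{|z-u|>d\}}(1+|z-u|)^{-(2n+\delta)}$, then apply a Schur-type bound.

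The one genuine difference is in how you handle general $p$. The paper first runs the (Hilbert-space) Schur test with weight $e^{\phi}$ to get the $\Ftwophi\to\Ltwophi$ bound $\lesssim d^{-\delta/2}$, and then reaches $1<p<2$ by complex interpolation against the trivial $\Fonephi\to\Lonephi$ bound coming from $A\in\mathcal{B}(\Fonephi)$ (and $2<p<\infty$ by duality plus the same interpolation). You instead pass to the unweighted picture via $\tilde f=fe^{-\phi}$ and note that both the row and column integrals of the reduced kernel are $\lesssim d^{-\delta}$, which immediately gives the $L^p(dv)\to L^p(dv)$ bound for every $1\le p\le\infty$ by the elementary $L^1$--$L^\infty$ interpolation form of Schur's test. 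Your route is shorter, avoids invoking Theorem~\ref{CompInterpTHM}, and even yields the sharper exponent $d^{-\delta}$; the paper's route has the minor advantage that the $p=2$ Schur test with weight $e^{\phi}$ is phrased entirely inside the weighted spaces, without the change of variables. Either way, the substance is the same off-diagonal kernel estimate.
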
 where the sets $F_j$ and $G_j$ are defined above.

\begin{proof}  We first prove the Lemma for $p = 2$.  To that end, note that \begin{align*} (APf) (w) &  - \sum_{j} \chi_{F_j}(w)   (AP M_{\chi_{G_j}} f)(w)  = \sum_{j} \chi_{F_j} (w)  (AP M_{\chi_{G_j ^c}} f)(w) \\ & = \incn \Phi(w, u) f(u) e^{-2\phi(u)} \, dv(u) \end{align*} where \begin{equation*} \Phi(w, u) := \sum_j {\chi_{F_j}} (w)  {\chi_{G_j ^c}} (u) \langle A K(\cdot, u), K(\cdot, w) \rangle.\end{equation*}

We then estimate that \begin{align*}
\incn |\Phi(w, u) | (e^{\phi(u)}) e^{-2\phi(u)}   \,   dv(u) & \approx e^{\phi(w)}  \sum_j \int_{G_j ^c}  \chi_{F_j } (w) |\langle A k_u, k_w \rangle| \, dv(u)
 \\ & \lesssim e^{\phi(w)}  \sum_j \int_{G_j ^c} \frac{\chi_{F_j}  (w) }{(1 + |u - w|)^{2n + \delta}} \, dv(u) \\ & \lesssim d^{-\frac{\delta}{2}} e^{\phi(w)} \end{align*} since $|u - w| \gtrsim d$ if $u \in F_j$ and $w \in G_j ^c$.  Similarly we can easily get that \begin{equation*} \incn |\Phi(w, u) | (e^{\phi(w)}) e^{-2\phi(w)}   \, dv(w) \lesssim d^{-\frac{\delta}{2}} e^{\phi(u)} \end{equation*} which by the Schur test proves the lemma if $p = 2$.

Now assume that $1 < p < 2$.  Since $A$ is bounded on $\Fonephi$ we easily get that \begin{equation*} \left\|\sum_{j} M_{\chi_{F_j}}  AP M_{\chi_{G_j}} \right\|_{\Fonephi \rightarrow \Lonephi} < \infty \end{equation*} which by complex interpolation proves the Theorem when $1 < p < 2$.

Finally when $2 < p < \infty$, one can similarly get a trivial $\Lonephi \rightarrow \Fonephi$ operator norm bound on \begin{equation*}\left( \sum_{j} M_{\chi_{F_j}}  AP M_{\chi_{G_j}} \right) ^* =  \sum_j P M_{\chi_{G_j}} A^* P M_{\chi_{F_j}} \end{equation*} since $A^*$ is bounded on $\Fonephi$. Duality and complex interpolation now proves the lemma when $2 < p < \infty$.

\end{proof}

We will now prove Theorem \ref{GenEssNormProp}

\textit{Proof of Theorem \ref{GenEssNormProp}.} We first prove (\ref{GenEssNormEst1}).  Let $A$ be
bounded on $\Fpphi$. Then since $P M_{\chi_{B(0, R)}} A$ is compact on $\Fpphi$ for any $R > 0$ and since the orthogonal projection $P : \Lpphi \rightarrow \Fpphi$ is bounded and coincides with the identity on $\Fpphi$, we have that \begin{equation*} \|A\|_\Q \leq \limsup_{R \rightarrow \infty} \|P A -  P M_{\chi_{B(0, R)}} A \|_{\Fpphi \rightarrow \Fpphi} \lesssim \limsup_{R \rightarrow \infty} \| M_{\chi_{B(0, R) ^c}} A \|_{\Fpphi \rightarrow \Lpphi}. \end{equation*}

On the other hand, if $K : \Fpphi \rightarrow \Fpphi$ is compact then Lemma \ref{GenEssNormLem1}  gives us that \begin{align*} \limsup_{R \rightarrow \infty} \| M_{\chi_{B(0, R) ^c}} A \|_{\Fpphi \rightarrow \Lpphi}  & = \limsup_{R \rightarrow \infty} \| M_{\chi_{B(0, R) ^c}}  (A - K) \|_{\Fpphi \rightarrow \Lpphi} \\ & \leq \|A - K \|_{\Fpphi \rightarrow \Fpphi} \end{align*} which completes the proof of (\ref{GenEssNormEst1}).

Now we will prove (\ref{GenEssNormEst2}).  By completely elementary arguments we have that \begin{equation*} \sup_{d > 0} \limsup_{|z| \rightarrow \infty} \|M_{\chi_{B(z, d)}} A P M_{\chi_{B(z, 2d)}}\|_{\Fpphi \rightarrow \Lpphi} \leq \limsup_{R \rightarrow \infty} \| M_{\chi_{B(0, R) ^c}} A \|_{\Fpphi \rightarrow \Lpphi} \end{equation*} for any bounded $A$ on $\Fpphi$.  Finally, since \begin{equation*}  \|A\|_Q \approx  \limsup_{R \rightarrow \infty} \| M_{\chi_{B(0, R) ^c}} A \|_{\Fpphi \rightarrow \Lpphi}, \end{equation*} we will complete the proof by showing that \begin{equation*} \|A\|_{\Q} \lesssim \sup_{d > 0} \limsup_{|z| \rightarrow \infty} \|M_{\chi_{B(z, d)}} AP M_{\chi_{B(z, 2d)}} \|_{\Fpphi \rightarrow \Lpphi} \end{equation*}  for any $A \in \SLphi$.  An easy approximation argument will then complete the proof.

To that end, let $\epsilon > 0$. Fix some $d > 0$ large enough where Lemma \ref{GenEssNormLem2} is true and let $\{F_j\}$ be the corresponding cover of $\C$ (with associated sets $\{G_j\}$.)  Then we have that \begin{equation*} \|A\|_\Q \leq \epsilon + \limsup_{m \rightarrow \infty} \left\| \sum_{j \geq m}  M_{\chi_{F_j}}  AP M_{\chi_{G_j}}  \right\|_{\Fpphi \rightarrow \Lpphi} \end{equation*}  However, if $f \in \Fpphi$ with norm one, then \begin{align*} \limsup_{m \rightarrow \infty} \left\| \sum_{j \geq m} M_{\chi_{F_j}}  AP M_{\chi_{G_j}}  f \right\|_{\Lpphi} ^p & = \limsup_{m \rightarrow \infty} \sum_{j\geq m} \left\| M_{\chi_{F_j}}  AP M_{\chi_{G_j}}  f \right\|_{\Lpphi} ^p \\ & \leq 2^{2n} \limsup_{m \rightarrow \infty} \|M_{\chi_{F_m}}  AP M_{\chi_{G_m}} \|_{\Fpphi \rightarrow \Lpphi} ^p \\ & \leq 2^{2n} \sup_{d > 0} \limsup_{|z| \rightarrow \infty} \|M_{\chi_{B(z, d)}} A  P M_{\chi_{B(z, 2d)}} \|_{\Fpphi \rightarrow \Lpphi} ^p.  \end{align*}  Letting $\epsilon \rightarrow 0^+$ now completes the proof.

We will now prove an extremely useful technical lemma whose proof is similar to the proof of Lemma \ref{GenEssNormProp} and part $(a)$ of Theorem $4.3$ in \cite{MW}. For the sake of notational ease, all norms in the rest of this section will either denote the $\Fpphi$ norm, the $\Fpphi$ operator norm, or the $\Fpphi \rightarrow \Lpphi$ norm.

\begin{lemma}  \label{MWLem} Suppose that $1 < p < \infty$ and let $\epsilon > 0$.  Pick $d > 0$ corresponding to $\epsilon $    in Lemma \ref{GenEssNormLem2}.  Then there exists a sequence  $\{z_j\}$ with $\lim_{j \rightarrow \infty} |z_j| = \infty$ such that  \begin{equation*}  \|A\|_Q \leq \epsilon +  \limsup_{j \rightarrow \infty} \|M_{\chi_{B(z_j, d \sqrt{2n})}} Ag_j \| \end{equation*} where \begin{equation*} g_j  :=  \int_{B(0, 2d\sqrt{2n})} a_j(u) \Tk_{z_j - u} \, dv(u) \end{equation*} and where $a_j$ satisfies    \begin{equation*} \int_{B(0, 2d\sqrt{2n})} |a_j(u)|^p \, dv(u) = 1. \end{equation*} \end{lemma}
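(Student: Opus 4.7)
My plan is to combine Lemma \ref{GenEssNormLem2} with a testing function analysis. First, I would invoke Lemma \ref{GenEssNormLem2} to choose $d = d(A, \epsilon) > 0$ such that $\|AP - \sum_j M_{\chi_{F_j}} APM_{\chi_{G_j}}\|_{\Fpphi \to \Lpphi} < \epsilon$, with $\{F_j\}$ and $\{G_j\}$ the dyadic-type cover described before Lemma \ref{GenEssNormLem2}. Using part $a)$ of Theorem \ref{FockSpacePropThm} to control the reproducing kernel, a direct integral computation shows that each operator $PM_{\chi_{G_j}}$ is Hilbert--Schmidt, so every finite partial sum $\sum_{j < m} M_{\chi_{F_j}} APM_{\chi_{G_j}}$ is compact on $\Fpphi$. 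Combined with the fact that $AP = A$ on $\Fpphi$, this yields
\begin{equation*}
\|A\|_Q \leq \epsilon + \liminf_{m \to \infty} \left\| \sum_{j \geq m} M_{\chi_{F_j}} APM_{\chi_{G_j}} \right\|_{\Fpphi \to \Lpphi}.
\end{equation*}

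Next, for each $m$ I would pick $f_m \in \Fpphi$ with $\|f_m\|_{\Fpphi} = 1$ realizing the tail operator norm within $1/m$. By the disjointness of $\{F_j\}$,
\begin{equation*}
\left\| \sum_{j \geq m} M_{\chi_{F_j}} APM_{\chi_{G_j}} f_m \right\|_{\Lpphi}^p = \sum_{j \geq m} \left\| M_{\chi_{F_j}} APM_{\chi_{G_j}} f_m \right\|_{\Lpphi}^p.
\end{equation*}
Applying the pigeonhole inequality $\sum_j c_j = \sum_j (c_j/b_j) b_j \leq (\max_j c_j/b_j) \sum_j b_j$ with $c_j = \|M_{\chi_{F_j}} APM_{\chi_{G_j}} f_m\|_{\Lpphi}^p$ and $b_j = \|M_{\chi_{G_j}} f_m\|_{\Lpphi}^p$, together with the bounded overlap $\sum_j \|M_{\chi_{G_j}} f_m\|_{\Lpphi}^p \leq 2^{2n}$, I extract an index $j(m) \geq m$ for which the normalized contribution at $j(m)$ dominates the tail sum.

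The third step is to rewrite $APM_{\chi_{G_{j(m)}}} f_m$ in integral form. Let $z_j$ denote the center of $F_j$ and set $h_m = M_{\chi_{G_{j(m)}}} f_m$, which is supported in $G_{j(m)} \subset B(z_{j(m)}, 2d\sqrt{2n})$. Substituting $v = z_{j(m)} - u$ in the projection formula and using $K(w, z) e^{-\phi(z)} = \Tk_z(w)$ gives
\begin{equation*}
Ph_m(w) = \int_{B(0, 2d\sqrt{2n})} a_m(u) \, \Tk_{z_{j(m)} - u}(w) \, dv(u)
\end{equation*}
with $a_m(u) := h_m(z_{j(m)} - u) e^{-\phi(z_{j(m)} - u)}$ and $\|a_m\|_{L^p(B(0, 2d\sqrt{2n}))}^p = \|M_{\chi_{G_{j(m)}}} f_m\|_{\Lpphi}^p$. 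After dividing $a_m$ by its $L^p$ norm, the associated $g_{j(m)}$ satisfies the normalization in the statement. Since $F_{j(m)} \subset B(z_{j(m)}, d\sqrt{2n})$, I obtain $\|M_{\chi_{F_{j(m)}}} A Ph_m\|_{\Lpphi} \leq \|M_{\chi_{B(z_{j(m)}, d\sqrt{2n})}} A g_{j(m)}\|_{\Lpphi}$, and since $j(m) \geq m$ forces $|z_{j(m)}| \to \infty$, passing to the $\limsup$ in $m$ closes the chain of inequalities.

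The main obstacle I foresee is the bounded-overlap constant $2^{2n/p}$ arising from the pigeonhole step: the strategy above literally produces $\|A\|_Q \leq \epsilon + 2^{2n/p} \limsup_j \|M_{\chi_{B(z_j, d\sqrt{2n})}} A \tilde g_j\|$ with an extra $2^{2n/p}$ factor compared with the statement. To reach the precise inequality, I would pre-absorb this factor either by applying Lemma \ref{GenEssNormLem2} at scale $\epsilon/(1 + 2^{2n/p})$ (so that the additive contribution of the overlap constant is dwarfed by the slack in $\epsilon$), or by interpreting the constant as being folded into the implicit choice of $d$ given $\epsilon$. The remaining ingredients---Hilbert--Schmidt compactness of finite partial sums, disjointness of $\{F_j\}$, bounded overlap of $\{G_j\}$, and the reproducing kernel substitution---are routine given the setup already in place in Section \ref{EssNormSec}.
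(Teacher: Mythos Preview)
Your approach is essentially identical to the paper's: invoke Lemma \ref{GenEssNormLem2}, drop the compact partial sum, use disjointness of the $F_j$ to split the $\Lpphi$ norm, apply the pigeonhole inequality with weights $\|M_{\chi_{G_j}} f\|^p$, and then rewrite $PM_{\chi_{G_j}} f$ as an integral of $\Tk_{z_j - u}$'s via the reproducing formula and a translation change of variable. The paper's definition of $a_j$ is written as $\langle f_j, \Tk_{z_j - u}\rangle$ normalized, which by the reproducing property is exactly your $f_m(z_{j(m)} - u)e^{-\phi(z_{j(m)} - u)}$, so the two formulations coincide.

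Your concern about the $2^{2n/p}$ factor is legitimate, and in fact the paper's own proof has the same issue: it passes from $\|T_m f\|^p \leq 2^{2n}\sup_{j\geq m}\|M_{\chi_{F_j}} A l_j\|^p$ to $\|T_m\|\lesssim \sup\{\ldots\}$, so the displayed inequality in the statement is only established with an implicit constant. Since every application of the lemma in the paper (Theorems \ref{CompOpSuff}, \ref{ToepEssNormThm}, \ref{MWThm}) either sets $\epsilon = \tfrac12\|A\|_\Q$ or sends $\epsilon\to 0$ and concludes with a $\lesssim$, the extra factor is harmless; your suggestion to absorb it by applying Lemma \ref{GenEssNormLem2} at a smaller $\epsilon$ is the clean way to make the statement literally true. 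One terminological quibble: for $p\neq 2$ the compactness of $PM_{\chi_{G_j}}$ is not via ``Hilbert--Schmidt'' but via the kernel estimate in part $a)$ of Theorem \ref{FockSpacePropThm}, which shows the operator is approximable by finite rank operators on $\Lpphi$.
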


\begin{proof} As in the proof of Theorem \ref{GenEssNormProp},  fix $d > 0$ such that \begin{equation*}  \|A\|_{\Q} \leq  \frac{\epsilon}{2} + \limsup_{m \rightarrow \infty} \left\|\sum_{j \geq m} M_{\chi_{F_j}} AP M_{\chi_{G_j}}\right\|.  \end{equation*}   However, if $ \|f\| \leq 1$,  then  \begin{align*} \left\|\sum_{j \geq m} M_{\chi_{F_j}} AP M_{\chi_{G_j}} f \right\|  ^p  & = \sum_{j \geq m} \left\| M_{\chi_{F_j}} AP M_{\chi_{G_j}} f \right\|  ^p \\ & = \sum_{j \geq m} \frac{\left\| M_{\chi_{F_j}} AP M_{\chi_{G_j}} f\right\|  ^p }{\|M_{\chi_{G_j}} f \|^p}   \|M_{\chi_{G_j}} f \|^p \\ & \leq 2^{2n} \sup_{j \geq m} \|M_{\chi_{F_j}} A l_j \|^p \end{align*} where \begin{equation*} l_j := \frac{ P M_{\chi_{G_j}}f}{\| M_{\chi_{G_j}}f\|}.\end{equation*} If $w_j$ is the center of the cubes $F_j$ then $F_j \subset B(w_j, d\sqrt{2n}) $ so that  $G_j \subset B(w_j, 2d\sqrt{2n})$. Now if \begin{equation*} T_m := \sum_{j \geq m} M_{\chi_{F_j}} AP M_{\chi_{G_j}} \end{equation*} then we have that \begin{align*} \|T_m\| & \lesssim \sup_{j \geq m} \sup_{\|f\| \leq 1} \left\{ \|M_{\chi_{F_j}}A l_j \| : l_j =  \frac{P M_{\chi_{G_j}} f }{\| M_{\chi_{G_j}} f \|} \right\} \\ & \lesssim   \sup_{|z| \geq |w_m|} \sup_{\|f\| \leq 1} \left\{ \|M_{B(z, d\sqrt{2n})}A g \| : g = \frac{P M_{B(z, 2d\sqrt{2n})} f }{\| M_{B(z, 2d\sqrt{2n})}f \|} \right\} \end{align*} and so \begin{equation*} \limsup_{m \rightarrow \infty} \|T_m\| \lesssim \limsup_{|z| \rightarrow \infty} \sup_{\|f\| \leq 1} \left\{ \|M_{B(z, d\sqrt{2n})}A g \| : g = \frac{P M_{B(z, 2d\sqrt{2n})} f }{\| M_{B(z, 2d\sqrt{2n})} f \|} \right\}. \end{equation*}

  Pick a sequence $\{z_j\} \subset \C$ and a corresponding sequence $\{f_j\} \subset \Fpphi$ with $\|f_j \| \leq 1$ such that \begin{align*} \limsup_{|z| \rightarrow \infty} & \sup_{\|f\| \leq 1} \left\{ \|M_{B(z, d\sqrt{2n})}A g \| : g = \frac{P M_{B(z, d\sqrt{2n})} f }{\| M_{B(z, d\sqrt{2n})} f \|} \right\} - \frac{1}{2} \epsilon \\ & \leq \limsup_{j \rightarrow \infty} \|M_{B(z_j, d\sqrt{2n})}Ag_j \| \end{align*} where \begin{align*} g_j  & :=  \frac{P M_{B(z_j, 2d\sqrt{2n})} f_j }{\| M_{B(z_j, 2d\sqrt{2n})}f_j \|}   = \frac{\int_{B(z_j, 2d\sqrt{2n})} \langle f_j, \Tk_u \rangle \Tk_u \, dv(u)}{\left(\int_{B(z_j, 2d\sqrt{2n})} |\langle f_j, \Tk_w\rangle|^p \, dv(w) \right)^\frac{1}{p} }  \\ & = \frac{\int_{B(0, 2d\sqrt{2n})} \langle f_j, \Tk_{ z_j - u} \rangle \Tk_{ z_j - u} \, dv(u)}{\left(\int_{B(0, 2d\sqrt{2n})} |\langle f_j, \Tk_{ z_j - w} \rangle|^p \, dv(w) \right)^\frac{1}{p} } \label{ImportantEquality} \end{align*} (where the second to last equality follows from the definition of $P$, the definition of $\Tk_w$, and the reproducing property.)

 Finally, setting \begin{equation*} a_j(u) := \frac{\langle f_j, \Tk_{ z_j - u} \rangle}{\left(\int_{B(0, 2d\sqrt{2n})} |\langle f_j, \Tk_{ z_j - w} \rangle|^p \, dv(w) \right)^\frac{1}{p} } \end{equation*} completes the proof. \end{proof}

 We will now prove three very interesting corollaries to Lemma \ref{MWLem}, the first of which is a proof of Theorem \ref{CompOpSuff} when $p \neq 2$.

\noindent \textit{Proof of Theorem \ref{CompOpSuff} when $p \neq 2$.} Let $A \in \SLphi$.   We in fact prove that there exists $R > 0$ such that \begin{equation*} \|A\|_\Q \lesssim R^{2n} \limsup_{|z| \rightarrow \infty} \sup_{w \in B(z, R)} |\langle A k_z, k_w\rangle|. \end{equation*}
Obviously there is nothing to prove if $\|A\|_{\Q} = 0$ so assume $\|A\|_{\Q} > 0$.  Then by Lemma \ref{MWLem} with $\epsilon = \frac{1}{2} \|A\|_{\Q}$ we have a sequence $\{z_j\}$ with $\lim_{j \rightarrow \infty} |z_j| = \infty$ where  \begin{equation*}  \|A\|_Q \leq 2  \limsup_{j \rightarrow \infty} \|M_{B(z_j, R/2)} Ag_j \| \end{equation*} with \begin{equation*} g_j  :=  \int_{B(0, R)} a_j(u) \Tk_{ z_j - u} \, dv(u) \end{equation*}  where $a_j$ satisfies \begin{equation*} \int_{B(0, R)} |a_j(u)|^p \, dv(u) = 1 \end{equation*} and where $R := 2d\sqrt{2n}$ with $d$ coming from Lemma \ref{MWLem}. However, the reproducing property gives us that \begin{equation*} |A g_j (z) | e^{-\phi(z)} \leq \int_{B(0, R)} |a_j(u)|  |\langle A \Tk_{ z_j - u}, \Tk_{z} \rangle| \, dv(u) \end{equation*} so that by H\"{o}lder's inequality we have \begin{align*} \|A\|_Q ^p  \leq & 2^p \limsup_{j \rightarrow \infty} \int_{B(z_j, R)} \left( \int_{B(0, R)} |a_j(u)|  |\langle A \Tk_{z_j - u}, \Tk_{z} \rangle| \, dv(u) \right)^p \, dv(z) \\ & \lesssim R^{2np} \limsup_{|z| \rightarrow \infty} \sup_{w \in B(z, 2R)} |\langle A k_z, k_w\rangle| ^p \end{align*} which completes the proof.

We will now prove Theorem \ref{ToepEssNormThm} with the help of Lemma \ref{MWLem}.

 \noindent \textit{Proof of Theorem \ref{ToepEssNormThm}.}  First note that $\|T_\mu\| \lesssim 1$ if $\|\mu\|_* \leq 1$ so without loss of generality we can assume that $0 < \|T_\mu\|_\Q  <  1$ since otherwise there is nothing to prove. By Lemma \ref{MWLem} there exists a sequence  $\lim_{j \rightarrow \infty} |z_j| = \infty$ where  \begin{equation*}  \|A\|_Q \leq 2 \limsup_{j \rightarrow \infty} \|Ag_j \| \end{equation*} where \begin{equation*} g_j  :=  \int_{B(0, 2d\sqrt{n})} a_j(u) \Tk_{ z_j - u} \, dv(u) \end{equation*}   and where \begin{equation*} \int_{B(0, 2d\sqrt{n})} |a_j(u)|^p \, dv(u) = 1. \end{equation*} However, by the proofs of Proposition \ref{ToepOpInSL} and Lemma \ref{GenEssNormLem2}, we can pick $d > 0$ where $e^{-\frac{\epsilon d}{2}} \lesssim \|T_\mu\|_Q$ (where here $\epsilon$ corresponds to Proposition \ref{FockSpacePropThm}) so without loss of generality we may assume that $d = - \ln (\|T_\mu \|_Q )$

Furthermore, combining this with  the proof of Theorem \ref{CompOpSuff} when $p \neq 2$, we have that \begin{align*} \|T_\mu\|_\Q & \lesssim \left(\ln (\|T_\mu \|_Q )\right)^{{2n}} \limsup_{|z| \rightarrow \infty} \sup_{w \in B(z, R)} |\langle A k_z, k_w\rangle| \\ & \leq \left(\ln (\|T_\mu \|_Q )\right)^{{2n}} \limsup_{|z|, |w|  \rightarrow \infty} |\langle A k_z, k_w\rangle|  \end{align*} (where as before $R := 2d \sqrt{2n}$.)

Finally,  it is elementary that $u/(\ln u)^{2n} \geq C_\delta u ^\frac{1}{\delta}$ for $u \in (0, 1)$ which means that \begin{equation*} (\|T_\mu\|_{\Q})^\frac{1}{\delta} \leq C_\delta \limsup_{|z|, |w| \rightarrow \infty} |\langle T_\mu k_z, k_w\rangle |  \end{equation*} for all $0 < \delta < 1. $

We will end this section by extending the main essential norm estimate in \cite{MW} to the $\Fpphi$ setting in the situation where we are assuming the existence of a uniformly bounded family of operators $\{U_z\}_{z \in \C}$ on $\Fpphi$ such that (\ref{UzDef}) is true and where $\|U_z h\|_{\Fpphi} \gtrsim \|h\|_{\Fpphi}$ for all $z \in \C$ and $h \in \Fpphi$.  In particular, we will prove the following result whose proof is similar to the proof of part $(a)$ of Theorem $4.3$ in \cite{MW} .  It should be remarked that this provides a vastly simplified proof of the main results in \cite{BI} when $p \neq 2$. Also note that this theorem should be interpreted as another way of quantifying the statement that $\|A\|_{\Q}$ is equivalent to the ``norm of $A$ translated out to infinity."

\begin{theorem} \label{MWThm} Assuming the existence of a uniformly bounded family of operators $\{U_z\}_{z \in \C}$ on $\Fpphi$ satisfying (\ref{UzDef}) and where $\|U_z h\|_{\Fpphi} \gtrsim \|h\|_{\Fpphi}$ for all $z \in \C$ and $h \in \Fpphi,$ we have that \begin{eqnarray*} \|A\|_{\Q} \approx \sup_{\|f\|_{\Fpphi} \leq 1} \limsup_{|z| \rightarrow \infty} \| A U_z f \|_{\Fpphi } \end{eqnarray*} holds for any $A$ in the $\Fpphi$ operator norm closure of $\SLphi$ when $1 < p < \infty.$  \end{theorem}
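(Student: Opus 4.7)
The plan is to prove the two inequalities of the equivalence separately. The easy direction $\sup_{\|f\|_{\Fpphi}\leq 1}\limsup_{|z|\to\infty}\|AU_z f\|_{\Fpphi}\lesssim \|A\|_\Q$ holds for every bounded $A$. First I would verify that $U_z f\rightharpoonup 0$ weakly in $\Fpphi$ as $|z|\to\infty$ for each fixed $f$: from (\ref{UzDef}), Theorem~\ref{FockSpacePropThm}(b), and boundedness of $|\Theta|$, one has $U_z k_w\rightharpoonup 0$; Lemma~\ref{FockDensityLemma} together with the uniform boundedness of $\{U_z\}$ then extends this to all $f\in\Fpphi$. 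Consequently $\|KU_z f\|_{\Fpphi}\to 0$ for every compact $K$, and from $\|AU_z f\|\leq \|A-K\|\|U_z f\|+\|KU_z f\|$ I take the $\limsup$ in $z$, the supremum over $\|f\|\leq 1$, and the infimum over compact $K$.

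For the harder direction, denote the right-hand side of the theorem by $\Phi(A)$, fix $\epsilon>0$, and apply Lemma~\ref{MWLem} to produce $d=d(\epsilon)$, a sequence $z_j$ with $|z_j|\to\infty$, and functions $a_j$ supported on $B(0,2d\sqrt{2n})$ with $\int|a_j|^p=1$, such that
\[
\|A\|_{\Q}\leq \epsilon + \limsup_{j\to\infty}\bigl\|M_{\chi_{B(z_j,d\sqrt{2n})}}Ag_j\bigr\|_{\Lpphi}, \qquad g_j=\int a_j(u)\Tk_{z_j-u}\,dv(u).
\]
Using the identity $\Tk_w = e^{-\phi(w)}\sqrt{K(w,w)}\,k_w$ combined with Theorem~\ref{FockSpacePropThm}(f) and the inversion $k_{z_j-u} = \Theta(z_j,u)^{-1}U_{z_j}k_u$ coming from (\ref{UzDef}), I rewrite $g_j = U_{z_j}F_j$ with
\[
F_j = \int_{B(0,2d\sqrt{2n})} b_j(u)\,k_u\,dv(u),\qquad |b_j(u)|\lesssim |a_j(u)|,
\]
since both $|\Theta|$ and $e^{-\phi(\cdot)}\sqrt{K(\cdot,\cdot)}$ are bounded above and below. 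A duality computation via Theorem~\ref{FockSpacePropThm}(c), the reproducing property, and H\"older's inequality on $B(0,2d\sqrt{2n})$ yields $\|F_j\|_{\Fpphi}\leq C$ with $C$ an \emph{absolute} constant, independent of both $d$ and $j$.

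The critical technical step, and the main obstacle, is to prove that the linear map $T\colon L^p(B(0,2d\sqrt{2n}))\to\Fpphi$ defined by $Tb:=\int b(u)\,k_u\,dv(u)$ is \emph{compact}. Without this, weak convergence of $F_j$ (obtained from reflexivity of $L^p$) would be useless, since the \emph{varying} sequence $\{U_{z_j}\}$ does not in general transmit weak nullity through $A$. To prove $T$ is compact, I would exploit Theorem~\ref{FockSpacePropThm}(a): if $b_j\rightharpoonup 0$ in $L^p(B)$, then for each fixed $z$, $(Tb_j)(z)e^{-\phi(z)}\to 0$ since $u\mapsto e^{-\phi(u)}K(z,u)$ lies in $L^q(B)$, while for $|z|$ large, $|(Tb_j)(z)|e^{-\phi(z)}\lesssim \|b_j\|_{L^p(B)}\,e^{-\epsilon|z|}$ uniformly by off-diagonal decay; Lebesgue's dominated convergence theorem on a bounded region together with the uniform exponential tail then forces $\|Tb_j\|_{\Fpphi}\to 0$, giving compactness.

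Granting compactness of $T$, extract a subsequence so that $b_{j_k}\rightharpoonup b_\infty$ weakly, hence $F_{j_k}=T(b_{j_k})\to F_\infty:=T(b_\infty)$ in $\Fpphi$-norm with $\|F_\infty\|\leq C$. From
\[
\|AU_{z_{j_k}}F_{j_k}\|_{\Fpphi}\leq \|AU_{z_{j_k}}F_\infty\|_{\Fpphi} + \|A\|\sup_z\|U_z\|\,\|F_{j_k}-F_\infty\|_{\Fpphi}
\]
the second term vanishes as $k\to\infty$, while $\limsup_k\|AU_{z_{j_k}}F_\infty\|_{\Fpphi}\leq \|F_\infty\|\,\Phi(A)\leq C\,\Phi(A)$ after normalizing $F_\infty$ to the unit ball. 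Combining, $\|A\|_\Q\leq \epsilon+C'\Phi(A)$ for every $\epsilon>0$, hence $\|A\|_\Q\lesssim \Phi(A)$. Finally, the extension from $A\in\SLphi$ to the operator norm closure is routine since both $\|\cdot\|_\Q$ and $\Phi(\cdot)$ are Lipschitz in $A$ with respect to the operator norm (with constants $1$ and $\sup_z\|U_z\|$ respectively).
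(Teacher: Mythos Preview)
Your proof is correct and follows essentially the same architecture as the paper's: the easy direction via $U_z f\rightharpoonup 0$ and compactness, the hard direction via Lemma~\ref{MWLem}, rewriting $g_j=U_{z_j}F_j$, and passing to a norm-convergent subsequence of the $F_j$. Two technical points in your execution are actually cleaner than the paper's. First, you establish norm convergence $F_{j_k}\to F_\infty$ by proving compactness of the map $T\colon b\mapsto\int_B b(u)k_u\,dv(u)$ directly from the off-diagonal decay in Theorem~\ref{FockSpacePropThm}(a); the paper instead asserts that weak convergence of the coefficients $a_j$ in $L^p$ may be upgraded to pointwise convergence and then invokes dominated convergence, a step that is not fully justified as written. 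Second, your duality bound $\|F_j\|_{\Fpphi}\lesssim\|b_j\|_{L^p(B)}\lesssim 1$ bypasses the paper's route of deducing $\|h_j\|\lesssim 1$ from $\|g_j\|\lesssim 1$ via the hypothesis $\|U_z h\|\gtrsim\|h\|$; in fact your argument shows that this lower-bound hypothesis is not needed for the theorem at all.
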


\begin{proof}
Let $w \in \C$ and notice that \begin{equation*} \limsup_{|z| \rightarrow \infty} \| K U_z k_w \|_{\Fpphi} \lesssim \limsup_{|z| \rightarrow \infty} \| K k_{ z - w} \|_{\Fpphi} = 0  \end{equation*} if $K$ is compact on $\Fpphi$.   Thus, by an easy density argument, we have that \begin{equation*} \sup_{\|f\|_{\Fpphi} \leq 1} \limsup_{|z| \rightarrow \infty} \| K U_z f \|_{\Fpphi } = 0  \end{equation*} if $K$ is compact on $\Fpphi$.  In particular, if $K$ is compact on $\Fpphi$ then \begin{equation*}  \sup_{\|f\|_{\Fpphi} \leq 1} \limsup_{|z| \rightarrow \infty} \| A U_z f \|_{\Fpphi}  = \sup_{\|f\|_{\Fpphi} \leq 1} \limsup_{|z| \rightarrow \infty} \| (A - K) U_z f \|_{\Fpphi}  \lesssim  \| A - K  \|_{\Fpphi \rightarrow \Fpphi} \end{equation*} so that  \begin{align*}  \sup_{\|f\|_{\Fpphi} \leq 1} \limsup_{|z| \rightarrow \infty} \| A U_z f \|_{\Fpphi} \lesssim \|A\|_{\Q}. \end{align*}

Now for the other half of Theorem \ref{MWThm}, let $\epsilon > 0$ and pick a sequence  $\{z_j\}$ with $\lim_{j \rightarrow \infty} |z_j| = \infty$ where  \begin{equation*}  \|A\|_Q \leq \epsilon +  \limsup_{j \rightarrow \infty} \|Ag_j \|_{\Fpphi} \end{equation*} and where \begin{equation*} g_j  :=  \int_{B(0, 2d\sqrt{n})} a_j(u) \Tk_{z_j - u} \, dv(u). \end{equation*}

Now let $\rho := 2d \sqrt{2n}$.  Note that we can write $\Tk_{z_j - u} = \Theta(u, z_j) U_{z_j } \Tk_u$ where $|\Theta(\cdot, \cdot)|$ is bounded above and below  on $\C \times \C$.  Thus, it is not difficult to see that we can write $g_j$ as $g_j = U_{z_j} h_j$ where \begin{equation*} h_j =  \int_{B(0, \rho)} a_j(u) \Tk_u \, dv(u)
\end{equation*}  and where  \begin{equation*} a_j(u) := \frac{ \Theta(u, z_j) \langle f_j, \Tk_{z_j - u}\rangle}{\left(\int_{B(0, \rho)} |\langle f_j, \Tk_{z_j - u} \rangle|^p \, dv(u) \right)^\frac{1}{p}} \end{equation*}

Since $\{a_j\}$ is a bounded sequence in $L^p(B(0, \rho), dv)$, (passing to a subsequence if necessary) we can assume that $a_j $ converges in the weak $-*$ topology of $L^q(B(0, \rho))$ to a function $a$ on $B(0, \rho)$ (where $q$ is the conjugate exponent of $p$), which in particular means that we may also assume $a_j \rightarrow a$ pointwise on $B(0, \rho)$. Now if \begin{equation*} h =  \int_{B(0, \rho)} a(u) \Tk_u \, dv(u)
\end{equation*} then an easy application of the Lebesgue dominated convergence theorem (in conjunction with Theorem \ref{FockSpacePropThm}) gives us that $h_j \rightarrow h$ in $\Fpphi$.  Moreover, we have that \begin{equation*} 1 \gtrsim \|g_j\|_{\Fpphi} = \|U_{z_j} h_j \|_{\Fpphi} \approx \|h_j\|_{\Fpphi} \end{equation*} so that $\|h\| \lesssim 1$, and finally this gives us that \begin{equation*}   \|A\|_{\Q} \lesssim \epsilon  + \limsup_{j \rightarrow \infty} \|Ag_j \|  = \epsilon + \limsup_{j \rightarrow \infty} \|A U_{z_j} h_j \|  \lesssim \epsilon + \limsup_{j \rightarrow \infty} \|A U_{z_j} h \|  \end{equation*} and hence \begin{equation*} \|A\|_{\Q} \lesssim \sup_{\|f\| \leq 1} \limsup_{|z| \rightarrow \infty} \| AU_{z_j} f\| \end{equation*}  \end{proof}

\section{Open problems \label{OpenProblems}}
In this last section we will discuss some interesting open problems related to the results of this paper. The first obvious question is whether Theorem \ref{CompOpSuff} holds where we replace (\ref{CompOpSuffCond}) with the condition that \begin{equation} \label{BerTransVan} \lim_{|z| \rightarrow \infty} (B(A))(z) = 0\end{equation} when we do not necessarily assume the existence of a uniformly bounded family of operators $\{U_z\}_{z \in \C}$  that satisfies (\ref{UzDef}). Furthermore, it would be fascinating to know whether there is any kind of converse to Proposition \ref{BerProp} in the following sense: suppose that $\Ftwophi$ (or more generally $\Fpphi$) satisfies the condition that $(\ref{BerTransVan}) \Longrightarrow  (\ref{CompOpSuffCond})$ for all $R > 0$ and all bounded operators $A$ on $\Ftwophi$ (respectively, $\Fpphi$.)  Then does this necessarily imply the existence of a uniformly bounded family of operators $\{U_z\}_{z \in \C}$ satisfying (\ref{UzDef})?

  Now assume that reproducing kernels of $\Ftwophi$ satisfy \begin{equation} \label{FockSpaceAssumption} |\langle k_z, k_w\rangle| \approx \frac{1}{\|K(\cdot, z - w)\|_{\Ftwophi}} \end{equation} (which in fact is assumed in \cite{MW} and is true for the classical Fock space and in an appropriately modified form is true for the classical Bergman spaces over bounded symmetric domains.) Then a simple computation tells us that \begin{equation*}  U_z ^* f (w) := f(z - w) k_z(w) \end{equation*} defines a uniformly bounded family of operators on $\Fpphi$ such that (\ref{UzDef}) holds.

  Moreover, if (\ref{FockSpaceAssumption}) is true, then it is very easy to show that \textit{any} bounded operator $U_z$ on $\Fpphi$ satisfying (\ref{UzDef}) must be defined by \begin{equation}  \label{UzDef2} U_z ^* f (w) := C(z) f(z - w) k_z(w) \end{equation} for some function $C$ on $\C$ that is bounded above and below. In particular, an easy computation tells us that we must have \begin{equation*} U_z ^* k_u (w) = \frac{\overline{\Theta(z, w)} k_u (z - w) k_z(w)}{\langle k_z, k_w\rangle\|K(\cdot, z - w)\|_{\Ftwophi}}  \end{equation*} in order for (\ref{UzDef}) to be true.  Thus, by Liouville's theorem, we have that \begin{equation*} \overline{\Theta(z, w)} = C(z) \langle k_z, k_w\rangle \|K(\cdot, z - w)\|_{\Ftwophi} \end{equation*}  for $C(\cdot)$ bounded above and below on $\C$ (since (\ref{FockSpaceAssumption}) implies that $k_z(w) \neq 0$ for all $z, w \in \C$.)  The density of the reproducing kernels on $\Fpphi$ easily completes the proof.  It is therefore reasonable to ask when in general (\ref{UzDef2}) defines a bounded (or even a well defined) operator on $\Fpphi$ and if so whether any uniformly bounded family of operators $\{U_z\}_{z \in \C}$ satisfying (\ref{UzDef}) must in fact be of the form (\ref{UzDef2}).

Now let $\widetilde{A}$ denote the Berezin transform of a bounded operator $A$ on the unweighted Bergman space $\Ltwoa$. Note that it was shown in \cite{E1} that there is no $C > 0$ independent of $f$ satisfying $\|f\|_{L^\infty(\D)} \leq 1$  where \begin{equation*} \|T_f\|_{\Q} \leq C \limsup_{|z| \rightarrow 1^{-}} |\widetilde{T_f}(z)|. \end{equation*} While it is most likely also the case that the previous statement holds true in the Fock space setting $\Ft$ (though no immediate examples come to mind), and while it is very likely that one can prove an appropriate version of Theorem \ref{ToepEssNormThm} in the $\Ltwoa$ setting, it would be interesting to know if one can set $\delta = 0$ in Theorem \ref{ToepEssNormThm}. Furthermore, it would be very interesting to know if there exists $C > 0$ independent of $\mu$ with $\|\mu\|_* \leq 1$ where \begin{equation*} \|T_\mu\|_{\Q} \leq C \limsup_{|z| \rightarrow \infty} \|T_\mu k_z\|_{\Fpphi} \end{equation*} or even if the above estimate holds true in the $\Ft$ setting for all bounded $f$ on $\C$ with $\|f\|_{L^\infty(\C)} \leq 1$.  It would also be interesting to know whether an appropriately modified estimate holds in the $\Ltwoa$ setting holds.

Now if $f \in L^q(\C, dv)$ for $1 \leq q < \infty$, then H\"{o}lder's inequality immediately implies that \begin{equation*} \|f\|_* \leq \|f\|_{L^q(\C, dv)} \end{equation*} which means that $T_f$ can be approximated in the $\Fpphi$ operator norm for $1 \leq p < \infty$ by a Toeplitz operator with $C_c ^\infty(\C)$ symbol.  Obviously this result is \textit{not} true for $f \in L^\infty(\C)$ since otherwise if $f \equiv 1$ then $T_f = \text{Id}_{\Fpphi \rightarrow \Fpphi}$ would be compact on $\Fpphi$.  However, one can ask if $T_f$ for $f \in L^\infty(\C)$ can be approximated in the $\Fpphi$ norm by Toeplitz operators with smooth, bounded symbols whose derivatives of arbitrary order are also bounded.

Note that this is in fact true for the classical Fock space $\Fp$.  In particular,  if $\mu$ is a complex Borel measure on $\C$ where $|\mu|$ is Fock-Carleson, then it was proved in \cite{BI} that for $1 < p < \infty$,   \begin{align} \label{ToepApprox} \lim_{t \rightarrow 0^+} \|T_{\widetilde{\mu}^{(t)}} - T_\mu\|_{\Fp} = 0 \end{align} where $\widetilde{\mu}^{(t)}$ is the heat transform of $\mu$ given by  \begin{equation*} \widetilde{\mu}^{(t)} (z) := \frac{1}{(4\pi t)^n} \int_{\C} e^{- \frac{|z - w|^2}{4t}} \, d\mu(w). \end{equation*}       Unfortunately the arguments used in \cite{BI} (which are similar to some of the arguments in \cite{NZZ}) are not available in the generalized Fock space setting, and therefore it would be interesting to know if the above mentioned result is true for $\Fpphi$ (even for function symbols $f \in L^\infty(\C)$.)  Note that this, if proved,  would obviously imply that the Toeplitz algebra generated by Toeplitz operators with Fock-Carleson measure symbols would coincide with the Toeplitz algebra generated by Toeplitz operators with smooth, bounded (function) symbols whose derivatives of all orders is bounded, which as was stated in Theorem $\ref{BIThm}$ is true for $\Fp$.

A related question is whether the $\Fpphi$ operator norm closure of $\SLphi$ coincides with $\Tpphi(X)$ for some class of Borel measures $X$ on $\C$ (like say, bounded functions on $\C$.)  Even in the classical Fock space setting $\Ft$ it would be interesting to know if the $\Ft$ operator norm closure of $\SLphi$ with $\phi = \alpha |\cdot|^2/2$ coincides with the Toeplitz algebra $\Tt (L^\infty(\C))$.
One rather interesting approach to this question would be to construct a ``k-Berezin transform" for $\Ft$ that is analogous to the k-Berezin transform introduced by D. Su\'{a}rez in \cite{S1} and further studied in \cite{S2,NZZ}.

It would also be interesting to know if $ \{T_f : f \in C_c ^\infty(\C)\}$ is dense in the space of compact operators on $\Fpphi$ for $1 < p < \infty$ (and $p \neq 2$.)   As was already remarked, the arguments in Section \ref{SectionProofs2} actually show that $\s \{T_f T_g : f, g \in C_c ^\infty(\C)\}$ is dense in $\Fpphi$ when $1 < p < \infty$, which is ``not too far" from $ \{T_f : f \in C_c ^\infty(\C)\}.$ Furthermore, note that the formulas (\ref{ToepCompForm1}) and (\ref{ToepCompForm2}) hold for symbols other than those in the space $\F(C_c ^\infty (\C))$ (see \cite{B} for more details.)  However, (\ref{ToepCompForm1}) and (\ref{ToepCompForm2}) are most emphatically exclusive to the classical Fock space setting.  In particular, since Toeplitz operators with ``nice" function symbols on $\Ft$ are unitarily equivalent to certain Weyl $\Psi$DOs on $L^2(\mathbb{R}^n)$ under the Bargmann isometry $\mathcal{B} :  \Ft \rightarrow L^2(\mathbb{R}^n)$, one can informally use the well known asymptoptic composition formula for the product of $\Psi$DOs and pull back to $\Ft$ to guess (\ref{ToepCompForm1}) and (\ref{ToepCompForm2}) (see \cite{F} or \cite{Zhu} for a much more detailed description of the above ideas.)  Because of this, it will most likely require new techniques to prove that $ \{T_f : f \in C_c ^\infty(\C)\}$ is dense in the space of compact operators on $\Fpphi$ for general $1 < p < \infty$.

Finally, we end this paper with a simple but nonetheless interesting fact.  First, note that the argument used to prove Theorem 1' in \cite{E} extends to the generalized Fock space setting and shows that $\{T_f : f \in C_c ^\infty(\C)\}$ is $\SOT$ dense in the space of bounded operators on $\Ftwophi$.  In particular, suppose that $f_1, \ldots, f_k$ and $g_1, \ldots, g_m$ for $k, m \in \N$ are two sequences of linearly independent functions in $\Ftwophi$.  If we now define $R : C_c ^\infty (\C) \rightarrow \mathbb{C}^{k \times m}$ by \begin{equation*} (R_\phi)_{ij} := \int_{\C} \phi(z) f_i (z) \overline{g_j(z)} \, e^{-2\phi(z)} \, dv(z) \end{equation*} then it is not difficult to show that $R$ is surjective, which by elementary Hilbert space arguments proves the claim.

\end{document}